\numberwithin{equation}{section}
\newtheorem{thm}{Theorem}[section]
\newtheorem{cor}[thm]{Corollary}
\newtheorem{prop}[thm]{Proposition}
\newtheorem{lem}[thm]{Lemma}
\newcommand{\R}{\mathbb{R}}
\newcommand{\N}{\mathbb{N}}
\newcommand{\Z}{\mathbb{Z}}
\newcommand{\al}{\alpha}
\newcommand{\xs}{\overline{x}}
\renewcommand{\theta}{\vartheta}
\renewcommand{\epsilon}{\varepsilon}
\newcommand{\ep}{\epsilon}
\newcommand{\I}{\mathcal{I}_s}
\newcommand{\cs}{\overline{c}}
\newcommand{\vs}{\overline{v}}
\newcommand{\xss}{\hat{x}}
\newcommand{\css}{\hat{c}}
\newcommand{\dss}{\hat{\delta}}
\newcommand{\thh}{\hat{\theta}}
\renewcommand{\leq}{\leqslant}
\renewcommand{\le}{\leqslant}
\renewcommand{\geq}{\geqslant}
\renewcommand{\ge}{\geqslant}
\newcommand{\beq}{\begin{equation}}
\newcommand{\eeq}{\end{equation}}
\newcommand{\beqs}{\begin{equation*}}
\newcommand{\eeqs}{\end{equation*}}
\newcommand{\beqa}{\begin{eqnarray}}
\newcommand{\eeqa}{\end{eqnarray}}
\newcommand{\beqas}{\begin{eqnarray*}}
\newcommand{\eeqas}{\end{eqnarray*}}
\title[Crystal dislocation dynamics]{Long-time behavior for crystal dislocation dynamics}
\author{Stefania Patrizi and Enrico Valdinoci}
\thanks{The authors have been supported by the
ERC grant 277749 ``EPSILON Elliptic
Pde's and Symmetry of Interfaces and Layers for Odd Nonlinearities''}
\address[Stefania Patrizi]{
Department Of Mathematics,
University of Texas at Austin,
2515 Speedway, Austin TX 78712, United States}
\address[Enrico Valdinoci]{
School of Mathematics and Statistics,
University of Melbourne,
813 Swanston Street, Parkville VIC 3010, Australia}
\address[Enrico Valdinoci]{
Universit\`a degli studi di Milano,
Dipartimento di Matematica,
Via Saldini 50, 20133 Milan, Italy}
\address[Stefania Patrizi and Enrico Valdinoci]{
Weierstra{\ss} Institut f{\"u}r Angewandte und Stochastik,
Mohrenstra{\ss}e 39, D-10117 Berlin, Germany}
\email{spatrizi@math.utexas.edu} 
\email{enrico@mat.uniroma3.it}
\subjclass[2010]{82D25, 35R09, 74E15, 35R11, 47G20.}
\keywords{Peierls-Nabarro model, nonlocal integro-differential equations,
dislocation dynamics, attractive/repulsive potentials, collisions.}
\begin{document}

\begin{abstract}
We describe the asymptotic states for the solutions of
a nonlocal equation of evolutionary type, which have the
physical meaning of the atom dislocation function in a periodic crystal.

More precisely, we can describe accurately the ``smoothing effect''
on the dislocation function occurring slightly after a
``particle collision'' (roughly speaking, two opposite transitions
layers average out) and, in this way, we can
trap the atom dislocation function between a superposition
of transition layers which, as time flows, approaches
either a constant function
or a single heteroclinic (depending on the algebraic properties
of the orientations of the initial transition layers).

The results are endowed of explicit and quantitative
estimates and, as a byproduct, we show that
the ODE systems of particles that governs the evolution
of the transition layers does not admit stationary solutions
(i.e., roughly speaking, transition layers always move).
\end{abstract}
\maketitle

\section{Introduction}

In the scientific literature, several
models have been considered in order to describe the motion
of the atom dislocations in a crystal. Roughly speaking,
a crystal is a structure in which the atoms have
the strong tendency to occupy some given site of a lattice;
nevertheless, some atom may occupy a different position that
the one at rest, and an
important question is
the accurate description of the evolution of this dislocation function
and of its
asymptotic and stationary behaviors.

Since different scales come into play in such description,
different models have been adopted, in order to
deal with phenomena at the atomic, microscopic, mesoscopic
and macroscopic scale. Goal of this paper is to consider
a microscopic model, inspired by 
(and, in fact, even more general than)
the classical one by
Peierls and Nabarro, see e.g.~\cite{nabarro}
for a detailed description
and also Section~2 in~\cite{dpv} for a simple
introduction.\medskip

In this setting, after a suitable section of a three-dimensional
crystal with a transverse plane,
the edge dislocation of the atoms along a slip plane
is described by a function~$v_\varepsilon=v_\varepsilon(t,x)$, where~$t\ge0$ is
the time variable, $x\in\R$ is the space variable
and~$\varepsilon>0$ is the characteristic length of the crystal
(say, roughly speaking, the distance between the minimal rest positions
of the crystal atoms).

The function~$v_\varepsilon$ satisfies a nonlocal equation since
the evolution along the slip plane is
influenced by the whole structure of
the crystal, which favors the rest position of the atoms
in a lattice, 
that, in our case, will be taken to be~$\Z$.

More precisely, the influence of the elastic energy of the whole
crystal along the slip plane produces a fractional operator,
which we denote by~$\I$ and which
is balanced by a force coming from a periodic multi-well
potential~$W$
produced by the periodic structure of the crystal in the large.

The presence of an external stress~$\sigma$ can also be taken into account
(of course, if one aims at ``general'' results, one has to
assume that this stress is sufficiently small
to allow a long-time behavior in which the structure
of the crystal is dominant with respect to the external forces).
\medskip

In further detail,
we consider here the initial value problem 
\beq\label{vepeq}\begin{cases}
\partial_t v_\ep=\displaystyle\frac{1}{\ep}\left(
\I v_\ep-\displaystyle\frac{1}{\ep^{2s}}W'(v_\ep)+\sigma(t,x)\right)&\text{in }(0,+\infty)\times\R\\
v_\ep(0,\cdot)=v_\ep^0&\text{on }\R
\end{cases}\eeq
where $\ep>0$ is a small scale parameter, $W$ is a periodic potential and  $\I$ is the so-called fractional Laplacian 
 of any order $2s\in(0,2)$,
that we define (up to a multiplicative normalization constant
that we neglect) as 
$$ \I[\varphi](x):=\frac12
\displaystyle\int_{\R}\displaystyle\frac{\varphi(x+y)+
\varphi(x-y)-2\varphi(x)}{|y|^{1+2s}}dy.$$
When~$s=\frac12$ and~$W(v):=1-\cos(2\pi v)$,
stationary solutions of~\eqref{vepeq} correspond to equilibria
in the classical model for dislocation dynamics of
Peierls and Nabarro~\cite{nabarro} 
(and indeed the results that we present are new even for
such model case). See also~\cite{s} or~\cite{dnpv} for a basic introduction
to the fractional Laplace operator. 

We assume that~$W$ is a multi-well potential with nondegenerate
minima at integer points.
More precisely, we suppose that
\begin{equation}\label{Wass}
\begin{cases}W\in C^{3,\alpha}(\R)& \text{for some }0<\alpha<1\\
W(v+1)=W(v)& \text{for any } v\in\R\\
W=0& \text{on }\Z\\
W>0 & \text{on }\R\setminus\Z\\
W''(0)>0.\\
\end{cases}
\end{equation}
The function $\sigma$ represents the external stress
and we assume on it the following regularity
conditions:
\begin{equation}\label{sigmaassump}
\begin{cases}
\sigma \in BUC([0,+\infty)\times\R)\quad\text{and for some }M>0\text{ and }\alpha\in(s,1)\\
\|\sigma_x\|_{L^\infty([0,+\infty)\times\R)}+\|\sigma_t\|_{L^\infty([0,+\infty)\times\R)}\leq M\\
|\sigma_x(t,x+h)-\sigma_x(t,x)|\leq M|h|^\alpha,\quad\text{for every }x,h\in\R \text{ and }t\in[0,+\infty).
\end{cases}
\end{equation}
\medskip

In order to detect the long-time evolution of the system
in~\eqref{vepeq}, we consider initial values that come
from a ``finite (but arbitrarily large)
number'' of single atom dislocations. 

To make this assumption more explicit,
we introduce the so-called basic layer solution $u$ associated to $\I$
(see~\cite{psv, cs, cozzi}), that is the solution of
the stationary equation
\begin{equation}\label{u}
\begin{cases}\I(u)=W'(u)&\text{in}\quad \R\\
u'>0&\text{in}\quad \R\\
\displaystyle\lim_{x\rightarrow-\infty}u(x)=0,\quad\displaystyle\lim_{x\rightarrow+\infty}u(x)=1,\quad u(0)=\displaystyle\frac{1}{2}.
\end{cases}
\end{equation}
Given $x_1^0<x_2^0<\dots<x_N^0$, 
 we say that the function $u\left(\frac{x-x_i^0}{\ep}\right)$ 
is a transition layer centered at $x_i^0$ and positively oriented. 
Similarly, we say that the function
$u\left(\frac{x_i^0-x}{\ep}\right)-1$
is a transition layer centered at $x_i^0$ and negatively oriented.

We observe that a positive oriented transition layer connects
the integer values~$0$ and~$1$, with a transition that
becomes steeper and steeper as~$\ep\to0$. Viceversa,
a negative oriented transition layer connects
the integer values~$0$ and~$-1$.

In this setting, we consider as initial condition in \eqref{vepeq}
the superposition of~$K$ positive oriented transition layers
with~$N-K$ negative oriented transition layers (modified
by a small term which takes into account the possible
reaction to an external stress), given by the formula
\beq\label{vep03}
v_\ep^0(x):=\displaystyle\frac{\ep^{2s}}{\beta}
\sigma(0,x)+ \sum_{i=1}^Nu\left(\zeta_i
\displaystyle\frac{x-x_i^0}{\ep}\right)-(N-K),
\eeq 
where $u$ is
solution of \eqref{u}, $\zeta_1,\dots,\zeta_N\in\{-1,1\}$, $\displaystyle\sum_{i=1}^N(\zeta_i)^+=K$, $0\leq K\leq N$ and
\beq\label{beta}\beta:=W''(0)>0.\eeq 
We observe that when~$\zeta_i=1$, the $i$th transition layer
in~\eqref{vep03} is positively oriented, while
when~$\zeta_i=-1$,
it is negatively oriented.
We also point out that, if~$\sigma\equiv0$, then
\begin{equation}\label{YU78hG12SA}
\begin{split}
& \lim_{x\to-\infty} v_\ep^0(x)=
\sum_{{1\le i\le N}\atop{\zeta_i=-1}} 1 - (N-K) =0\\
{\mbox{and }}
& \lim_{x\to+\infty} v_\ep^0(x)=
\sum_{{1\le i\le N}\atop{\zeta_i=1}} 1 - (N-K) = 2K-N.\end{split}\end{equation}
\medskip

It has been shown in~\cite{gonzalezmonneau} (when~$s=\frac12$),
in~\cite{dpv} (when~$s\in\left(\frac12,1\right)$)
and in~\cite{dfv} (when~$s\in\left(0,\frac12\right)$)
that the evolution of~$v_\ep$ with the initial condition
in~\eqref{vep03} resembles, as~$\ep\to0$, a step functions
with integer values, whose $N$ points of discontinuity,
say~$(x_1(t),\ldots,x_N(t))$,
move according to a dynamical system.
More precisely, as proved in~\cite{pv2},
the potential that drives this dynamical system is either
repulsive (when the associated transition layers have the same
orientations) or attractive (when they have opposite orientations).
In case of attractive potentials, these discontinuity points
(sometimes referred in a suggestive but perhaps
a bit improper way with the name of ``particles'')
collide in a finite time~$T_c$, see again~\cite{pv2}
for a detailed description of this phenomenon.
\medskip

The explicit system of ordinary differential equations
which govern the motion of these jump points~$
(x_1(t),\ldots,x_N(t))$ is given by
\beq\label{dynamicalsysNeven}\begin{cases}\dot{x}_i=\gamma\left(
\displaystyle\sum_{j\neq i}\zeta_i\zeta_j 
\displaystyle\frac{x_i-x_j}{2s |x_i-x_j|^{1+2s}}-\zeta_i\sigma(t,x_i)\right)&\text{in }(0,T_c)\\
 x_i(0)=x_i^0,
\end{cases}\eeq
for $i=1,\ldots,N$,
where  \beq\label{gamma}\gamma:=\left(\displaystyle\int_\R (u'(x))^2 dx\right)^{-1},\eeq and $0<T_c\leq+\infty$ is the collision time 
of system \eqref{dynamicalsysNeven}.

More explicitly, a collision time~$T_c$ is characterized by the fact that
 $$x_{i+1}(t)>x_{i}(t)\quad\text{for any }t\in[0,T_c)\text{ and }i=1,\dots, N-1$$ and there exists $i_0$ such that 
 $$x_{i_0+1}(T_c)=x_{i_0}(T_c).$$
\medskip

If a collision occurs, after the collision time~$T_c$, the dynamical system
in~\eqref{dynamicalsysNeven} (as given in~\cite{gonzalezmonneau, dpv, dfv, pv2})
ceases to be well-defined,
since at least one of the denominators vanishes, hence
the mesoscopic description in the
limit as~$\ep\to0$ ceases to be available. Nevertheless,
for a fixed~$\ep>0$, the solution~$v_\ep$ of
the evolution equation~\eqref{vepeq} continues to exist
and to describe the dislocation dynamics.\medskip 

In~\cite{pv3}, we gave
a first explicit description of what happens to the solution~$v_\ep$
after the collision time when {\em only two or three}
layer solutions are taken into account.
Goal of this paper is to further extend this study, by taking into
account the superposition of {\em any number of transition layers},
by describing qualitatively the {\em asymptotic states} and by
providing {\em quantitative estimates} on
the relaxation times needed to approach the limits.\medskip

To this goal, we consider several cases, such as: 
\begin{itemize}
\item the situation in which 
the first $K$ transition layers are positively oriented and the 
remaining last $N-K$ negatively oriented (we call this situation
the ``segregate orientation'' case),
\item the situation in which there are as many positively oriented
as negatively oriented transition layers (we call this situation
the ``balanced orientation'' case),
\item the situation in which there are more positively oriented
than negatively oriented transition layers (we call this situation
the ``unbalanced orientation'' case;
of course the opposite
situation in which there are more negatively oriented
than positively oriented transition layers can be reduced to this case,
up to a spacial reflection).
\end{itemize}

The results that we obtain are naturally different according to
the different cases. In the segregate orientation case
we will show that, roughly speaking, 
the last ``positively oriented particle''
in the dynamical system~\eqref{dynamicalsysNeven} will collide
with the first ``negatively oriented particle''
at some time~$T_c$; then, slightly after~$T_c$, two
transition layers of the solution~$v_\ep$ will merge the one into
the other and annihilate each other (as a consequence,
after this, the solution~$v_\ep$ somehow decreases
its oscillations).

We remark that the segregate orientation case
is not only interesting in itself, but it also provides a natural
comparison for the general case (i.e. it provides the necessary
barriers for the other cases, thus reducing each time the picture
to the ``worst possible scenario'').\medskip

The balanced orientation case presents the special feature of
having~$K=N-K$, that is~$N=2K$, which says that the dislocation
function goes to zero both at~$-\infty$ and at~$+\infty$
(recall~\eqref{YU78hG12SA}). These conditions at infinity
influence the asymptotic behavior in time of~$v_\ep$, since
we will show that,
after a transient time in which collisions occur,
the solution~$v_\ep$ relaxes to zero
exponentially fast.\medskip

The unbalanced orientation case is somehow more complex.
In this case, we have~$K > N-K$, so we set~$l:=
2K-N=K-(N-K)>0$ (notice that~$l$ is the difference between
positively oriented and negatively oriented initial transitions).
In this situation, the initial dislocation approaches zero at~$-\infty$
and~$l$ as~$x\to+\infty$ (recall again~\eqref{YU78hG12SA}).

The asymptotics in time of the dislocation function~$v_\ep$
is again influenced by these conditions at infinity, since, roughly
speaking, the limit behavior as~$t\to+\infty$ will try to make an
average between the two values at infinity.
On the other hand, this ``exact'' average procedure
is not (always) possible for the system
and indeed it is not (always) true that~$v_\ep$
approaches the constant value~$\frac{l}{2}$ as~$t\to+\infty$.

The heuristic reason for this fact is that the constant~$\frac{l}{2}$
is not necessarily a solution of the stationary equation,
and even when it is a solution (as in the model case given by
the choice of the potential~$W(v):=1-\cos(2\pi v)$) such solution
is unstable from the variational point of view.

In fact, we will show that the constant value~$\frac{l}{2}$ 
is only reached as~$t\to+\infty$
``in average'' in a possibly dynamical way
and in a way which is compatible
with the stable solutions of the stationary equation.
Namely, if~$\frac{l}{2}\in\N$ (i.e. $l$ is even)
then\footnote{It is worth to point out that, as expected,
the unbalanced orientation case
boils down to the balanced orientation case when~$l=0$ (in any case,
the quantitative estimates that we obtain in the balanced case
are more explicit and precise than the ones for the unbalanced case).}
indeed~$v_\ep\to \frac{l}{2}$ as~$t\to+\infty$;
but if instead~$\frac{l}{2}\not\in\N$ (i.e. $l$ is odd) then, for large times,
the dislocation function~$v_\ep$ will approach a transition layer
which joins the integer~$(l-1)/2$ at~$-\infty$
with the integer~$(l+1)/2$ at~$+\infty$ (that is,
a vertical translation of the standard heteroclinic from~$0$ to~$1$).
Thus, when~$l$ is odd, the constant value~$\frac{l}{2}$
is not attained in the limit~$t\to+\infty$, but instead
the system attains a dynamic connection between the values~$\frac{l}{2}-
\frac{1}{2}$ and~$\frac{l}{2}+\frac{1}{2}$.
\medskip

All these statements will be proved in a quantitative way,
by using appropriate comparison functions.
We now give a precise mathematical statements of the results
that we have just described in words.

\subsection{The segregate orientation case}

We first consider the particular case in which 
the first $K$ transition layers in \eqref{vep03}  
are positively oriented and the remaining last $N-K$ negatively oriented, i.e., we assume 
\begin{equation}\label{kposin-knegzeta}\zeta_i=\begin{cases}1&\text{for }i=1,\ldots,K\\
-1&\text{for }i=K+1,\ldots,N.
\end{cases}
\end{equation}
Under this\footnote{As a matter of fact,
we will show in Lemma \ref{coo1}, that
if $1-2s\theta_0^{2s}\|\sigma\|_\infty>0$
and~\eqref{kposin-knegzeta} holds true,
then a collision always occurs in a finite time, i.e.,  $T_c<+\infty$.} 
assumption, we show that if the collision time $T_c$ is finite, then the collision occurs between  particles $x_K$ and  $x_{K+1}$, and after a time $T_\ep$, which is slightly larger than $T_c$, the function $v_\ep$ is dominated by the superposition of $N-2$ transition layers, the first $K-1$ of them positively oriented 
and the last $N-K-1$ negatively oriented. 

The precise mathematical statement goes as follows:

\begin{thm}\label{mainthmbeforecollNeven}
Assume that  \eqref{Wass}, \eqref{sigmaassump}, \eqref{kposin-knegzeta} hold, that~$0<K<N$ and that~$T_c<+\infty$. Let $v_\ep$ be the solution of \eqref{vepeq}-\eqref{vep03} and $(x_1(t),\ldots,x_N(t))$ the solution of \eqref{dynamicalsysNeven}.
Then there exist $\ep_0>0$  and $c>0$ such that for any $\ep<\ep_0$   there exist 
$x_1^\ep,\ldots,x_{K-1}^\ep,x_{K+2}^\ep,\ldots, x_{N}^\ep\in\R$ and $T_\ep,\,\varrho_\ep>0$, such that for $i\in\{1,\ldots,K-1,K+2,\ldots N\}$,  
\beq\label{xiodpertumainthm}x_i^\ep=x_i(T_c)+o(1)\quad \text{ as }\ep\to0,
\eeq
\beq\label{xi+1-xiboundmainthm1} x_{i+1}^\ep-x_i^\ep\geq c,\eeq 
 $$T_\ep=T_c+o(1)\quad\text{as } \ep\to 0,$$
 \beq\label{varroepmainthm}\varrho_\ep=o(1),\quad \frac{\ep^{2s}}{\varrho_\ep}=o(1),\quad \frac{\varrho_\ep}{\ep^s}=o(1)\quad\text{as } \ep\to 0
 \eeq and for any $x\in\R$,

 \beq\label{vlesepN-2} v_\ep(T_\ep,x)\leq \displaystyle\frac{\ep^{2s}}{\beta}
\sigma(T_\ep,x)+ \sum_{i=1}^{K-1}u\left(
\displaystyle\frac{x-x_i^\ep}{\ep}\right)+ \sum_{i=K+2}^{N}u\left(
\displaystyle\frac{x_i^\ep-x}{\ep}\right)-(N-K-1)+ \varrho_\ep,\eeq
where $u$ is the solution of \eqref{u} and $\beta$ is given by \eqref{beta}.
\end{thm}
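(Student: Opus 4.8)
The plan is to construct an explicit supersolution of \eqref{vepeq} built from the ``two-layer collision'' analysis of \cite{pv3} glued to the remaining $N-2$ layers, and then invoke the comparison principle for \eqref{vepeq}. The backbone is the following decomposition: near the collision time, particles $x_K$ and $x_{K+1}$ are the ones that collide (this must be established first), and all the other particles $x_i$, $i\le K-1$ or $i\ge K+2$, stay at mutually bounded distances and at bounded distance from the collision point on the relevant time interval, by the ODE analysis of \eqref{dynamicalsysNeven} together with the repulsive/attractive structure from \cite{pv2}. So the first step is a lemma on the ODE system: show that under \eqref{kposin-knegzeta} the first collision is exactly between the $K$th (last positive) and $(K+1)$th (first negative) particle — intuitively, adjacent opposite orientations attract while everything else repels, so the innermost opposite pair closes fastest — and that on $[0,T_c]$ the quantities $x_{i+1}(t)-x_i(t)$ for the non-colliding adjacent pairs stay bounded below by a positive constant $c$, giving \eqref{xi+1-xiboundmainthm1} with $x_i^\ep=x_i(T_c)+o(1)$.

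The second and main step is to build the supersolution. The idea is: for the colliding pair, use the result already proved in \cite{pv3} for two layers of opposite orientation, which tells us that slightly after $T_c$ the piece $u\!\left(\frac{x-x_K(\cdot)}{\ep}\right)+u\!\left(\frac{x_{K+1}(\cdot)-x}{\ep}\right)-1$ is dominated — up to an error $\varrho_\ep$ satisfying \eqref{varroepmainthm} — by a small constant (roughly $\varrho_\ep$), because two opposite layers annihilate. For the remaining $N-2$ layers one uses the standard ansatz
\beqs
w_\ep(t,x):=\frac{\ep^{2s}}{\beta}\sigma(t,x)+\sum_{i\le K-1}u\!\left(\frac{x-x_i(t)}{\ep}\right)+\sum_{i\ge K+2}u\!\left(\frac{x_i(t)-x}{\ep}\right)-(N-K-1)+(\text{corrector}),
\eeqs
which is (as in \cite{gonzalezmonneau,dpv,dfv,pv2}) an approximate solution of \eqref{vepeq} up to an error controlled by $\I$ acting on the tails of $u$ (which decay like $|x|^{-2s}$), by the $W''$-expansion near the integers, and by the external stress term — the $\frac{\ep^{2s}}{\beta}\sigma$ and $\beta=W''(0)$ are chosen precisely so the linearized potential cancels $\sigma$ to leading order. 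Adding the two pieces and a monotone-in-time correction $\varrho_\ep$ large enough to absorb the cross-interaction errors between the colliding block and the rest, one checks $\partial_t(\text{supersol})\ge \frac1\ep(\I(\cdot)-\frac1{\ep^{2s}}W'(\cdot)+\sigma)$ on $[0,T_\ep]$; the three smallness conditions in \eqref{varroepmainthm} are exactly what is needed for the $\I$-error ($\varrho_\ep/\ep^s$ small), the potential error ($\ep^{2s}/\varrho_\ep$ small), and the consistency of the correction ($\varrho_\ep=o(1)$).

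The third step is to verify the ordering at the initial time: at $t=0$ the constructed function exceeds $v_\ep^0$ from \eqref{vep03}, because replacing the $K$th and $(K+1)$th layers of $v_\ep^0$ by their eventual annihilated value only increases things once one is past collision, and before collision the extra layers plus $\varrho_\ep$ dominate; this requires care in matching the $\sigma$-terms, but they are identical in both expressions. Then the comparison principle for \eqref{vepeq} (nonlocal parabolic, available since $\I$ is the fractional Laplacian and $W,\sigma$ are smooth) yields $v_\ep(T_\ep,\cdot)\le(\text{supersolution})(T_\ep,\cdot)$, which is precisely \eqref{vlesepN-2}, with $T_\ep=T_c+o(1)$ because the two-layer annihilation in \cite{pv3} happens within $o(1)$ of $T_c$.

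The hardest part, I expect, is the bookkeeping of errors in the supersolution when gluing the ``post-collision two-layer block'' from \cite{pv3} to the $N-2$ persistent layers: one must ensure the cross terms $\I$ produces between the annihilating block and the far layers, as well as the motion of the far particles induced by the (now essentially absent) colliding block, are all of lower order than $\varrho_\ep$ uniformly on $[0,T_\ep]$ — in particular that $T_c$ is bounded away from times where any other pair would collide, so that the ``worst possible scenario'' reduction mentioned in the introduction is legitimate. Establishing that the collision is genuinely the $K,K{+}1$ one, with a quantitative lower bound on the gaps of all other adjacent pairs up to and slightly past $T_c$, is the structural input that makes the whole comparison go through.
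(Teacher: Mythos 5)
Your high-level strategy --- establish that the collision is between $x_K$ and $x_{K+1}$ with the other adjacent gaps bounded below, then build a supersolution and conclude via the comparison principle --- matches the paper's. Your reading of the three smallness conditions in \eqref{varroepmainthm}, and of the role of $\beta=W''(0)$ and of the $\frac{\ep^{2s}}{\beta}\sigma$ term in cancelling the linearized potential, is on target. However, there is a genuine gap in the construction of the supersolution, which is the heart of the proof.

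You propose to ``glue'' the post-collision two-layer annihilation from \cite{pv3} onto a separate $(N{-}2)$-layer ansatz. This is not what the paper does, and as written it does not work. First, the fractional operator $\I$ does not decouple across the two blocks, so the supersolution inequality cannot be checked piecewise; the \cite{pv3} annihilation statement is a conclusion about the final state, not a reusable supersolution module. Second, and more fundamentally, a time-independent superposition ``$(N{-}2)$ layers $+$ annihilated pair $+\varrho_\ep$'' cannot dominate $v_\ep^0$ at $t=0$: at the initial time the $K$th and $(K{+}1)$th layers of $v_\ep^0$ together contribute essentially a full unit step, whereas an annihilated block contributes essentially $0$, and $\varrho_\ep=o(1)$ is far too small to make up the difference. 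The ordering you assert at $t=0$ simply fails.

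The paper instead keeps a genuinely time-dependent, full $N$-layer ansatz $\vs_\ep$ (with the corrector $\psi$ and a $\delta$-perturbed version \eqref{dynamicalsysbarNeven} of the ODE system for the centers) and proves the annihilation geometrically, in two stages. First it runs this barrier up to a time $T^1_\ep$ at which the perturbed gap $\overline\theta_K$ shrinks to a carefully tuned $\theta_\ep=o(1)$. It then re-initializes with a \emph{second} $N$-layer barrier $\hat v_\ep$, built on the doubly perturbed system \eqref{dynamicalsysbarbarNeven}, in which $\xss_{K+1}(0)$ is pushed forward by $L\theta_\ep$ and every other particle pushed back by $\theta_\ep$; after a short additional time $t_\ep$, the solution has $\xss_K(t_\ep)\ge\xs_{K+1}^\ep$ while $\xss_{K+1}(t_\ep)-\xss_K(t_\ep)\ge\theta_\ep$. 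Consequently, for $x$ on the left of the midpoint both colliding centers are to the \emph{right} of $x$ at distance $\gtrsim\theta_\ep$, and the tail estimate \eqref{uinfinity} gives
$u\big(\tfrac{x-\xss_K(t_\ep)}{\ep}\big)+u\big(\tfrac{\xss_{K+1}(t_\ep)-x}{\ep}\big)-1\le C\ep^{2s}\theta_\ep^{-2s}=:\varrho_\ep$;
a symmetric barrier with $\hat y_K(0)$ pushed back by $L\theta_\ep$ handles $x$ on the right. This ``one layer overtakes the other'' mechanism is the actual source of the annihilation and of the explicit rate $\varrho_\ep=O(\ep^{2s}\theta_\ep^{-2s})$ needed to verify \eqref{varroepmainthm}; your proposal has no substitute for it.
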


The evolution of the dislocation function~$v_\ep$ from~$t<T_c$ to~$t>T_c$
is described in Figure~1 (roughly speaking, right after the collision
of the $K$th particle with the $(K+1)$th particle, the dislocation
averages out one oscillation).

\bigskip

\begin{center}
  \includegraphics[width=1.01\textwidth]{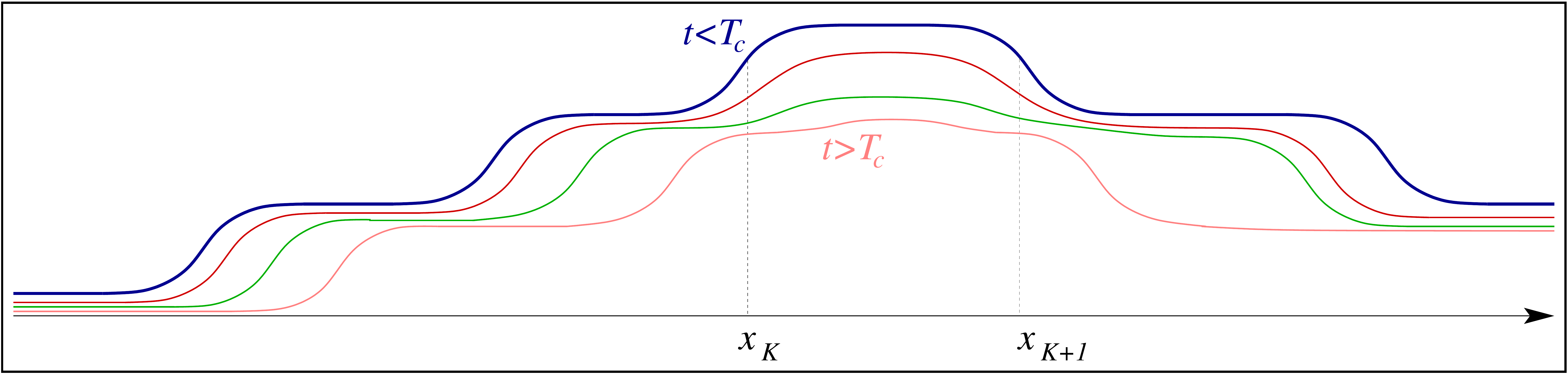}
{\footnotesize{
\nopagebreak$\,$\\
\nopagebreak
Figure 1 (segregate orientation case): Evolution of the 
dislocation function as described in 
Theorem~\ref{mainthmbeforecollNeven}.}}
\end{center}

\bigskip
\bigskip

In addition, we can better
quantify Theorem~\ref{mainthmbeforecollNeven}. Indeed,
the error term $\varrho_\ep$ in \eqref{vlesepN-2} becomes smaller than $\ep^{2s}$ after an additional small time $\tau_\ep$ as shown in the next theorem, as stated below.

\begin{thm}\label{thmexponentialdecayNeven} Under the assumptions of Theorem  \ref{mainthmbeforecollNeven}, if $N>2$, then there exists $\ep_0>0$ such that for any $\ep<\ep_0$ there 
 there exist 
$\widetilde{x}_1^\ep,\ldots,\widetilde{x}_{K-1}^\ep,\widetilde{x}_{K+2}^\ep,\ldots,\widetilde{x}_{N}^\ep\in\R$, and $\widetilde{\varrho}_\ep,\,\tau_\ep>0$, such that
\beq\label{tauepvarromexpthm}\tau_\ep=o(1),\quad\widetilde{\varrho}_\ep=o(1)\ep^{2s} \quad\text{as }\ep\to 0,\eeq for $i\in\{1,\ldots,K-1,K+2,\ldots N\}$,
\beq\label{xi-tildexiexpthm}|\widetilde{x}_i^\ep-x_i^\ep|=o(1)\quad\text{as }\ep\to 0,\eeq and 
 \beq\label{vlesepN-2exponthm} v_\ep(T_\ep+\tau_\ep,x)\leq \displaystyle\frac{\ep^{2s}}{\beta}
\sigma(T_\ep+\tau_\ep,x)+ \sum_{i=1}^{K-1}u\left(
\displaystyle\frac{x-\widetilde{x}_i^\ep}{\ep}\right)+ \sum_{i=K+2}^{N}u\left(
\displaystyle\frac{\widetilde{x}_i^\ep-x}{\ep}\right)-(N-K-1)+\widetilde{\varrho}_\ep,\eeq
where $T_\ep$ and the $x_i^\ep$'s are given in Theorem  \ref{mainthmbeforecollNeven},   $u$ is the solution of \eqref{u} and $\beta$ is given by \eqref{beta}.
\end{thm}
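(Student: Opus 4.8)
The plan is to iterate the construction of Theorem~\ref{mainthmbeforecollNeven} one step further in time, using the barrier in~\eqref{vlesepN-2} as a new initial datum and running a descent argument on the error term. More precisely, I would set~$t_0:=T_\ep$ and treat
$$\psi_\ep(x):=\frac{\ep^{2s}}{\beta}\sigma(t_0,x)+ \sum_{i=1}^{K-1}u\!\left(\frac{x-x_i^\ep}{\ep}\right)+ \sum_{i=K+2}^{N}u\!\left(\frac{x_i^\ep-x}{\ep}\right)-(N-K-1)+ \varrho_\ep$$
as a supersolution-type profile that already dominates~$v_\ep(t_0,\cdot)$, thanks to~\eqref{vlesepN-2}. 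Since~$N>2$, at least one transition layer remains on each side is not guaranteed, but in any case at least one layer survives, and the key point is that the $N-2$ surviving centers are mutually separated by a fixed distance~$c$ (by~\eqref{xi+1-xiboundmainthm1}), so the superposition of layers is a genuine approximate solution of the stationary equation up to errors that are controlled by the minimal distance and by the size of~$\sigma$. The comparison principle for~\eqref{vepeq} (which should have been established earlier in the paper, in the analysis leading to Theorem~\ref{mainthmbeforecollNeven}) then lets me evolve this barrier forward and show that the constant error~$\varrho_\ep$ contracts.

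The heart of the matter is the contraction estimate for the error. Here I would build an explicit supersolution of~\eqref{vepeq} of the form
$$\Phi_\ep(t,x)=\frac{\ep^{2s}}{\beta}\sigma(t,x)+ \sum_{i=1}^{K-1}u\!\left(\frac{x-\widetilde{x}_i^\ep(t)}{\ep}\right)+ \sum_{i=K+2}^{N}u\!\left(\frac{\widetilde{x}_i^\ep(t)-x}{\ep}\right)-(N-K-1)+ \phi_\ep(t),$$
where the centers~$\widetilde{x}_i^\ep(t)$ solve (a small perturbation of) the system~\eqref{dynamicalsysNeven} with the collided pair removed, starting from the~$x_i^\ep$, and where~$\phi_\ep(t)$ is a scalar function solving an ODE of the form~$\dot\phi_\ep=\frac{1}{\ep}\bigl(-\beta\,\ep^{-2s}\,c_0\,\phi_\ep+\text{(lower order)}\bigr)$, exploiting the nondegeneracy~$W''(0)=\beta>0$ from~\eqref{Wass} and~\eqref{beta}. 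The linearization of~$W'$ at the integer values produces a restoring term with rate~$\beta/\ep^{2s}$, so~$\phi_\ep(t)$ decays like~$e^{-\beta t/\ep^{1+2s}}$ towards a residual level of order~$o(1)\ep^{2s}$; choosing the relaxation time~$\tau_\ep$ so that~$\ep^{1+2s}\ll\tau_\ep\ll 1$ (which is consistent with~\eqref{varroepmainthm}, since~$\varrho_\ep/\ep^s=o(1)$ gives enough room) forces~$\phi_\ep(\tau_\ep)=\widetilde\varrho_\ep=o(1)\ep^{2s}$ and~$\tau_\ep=o(1)$, yielding~\eqref{tauepvarromexpthm}. The displacement of the centers over the time interval~$[t_0,t_0+\tau_\ep]$ is at most~$O(\tau_\ep)=o(1)$ (the velocities in~\eqref{dynamicalsysNeven} are bounded because the denominators stay bounded away from zero by~\eqref{xi+1-xiboundmainthm1}), which gives~\eqref{xi-tildexiexpthm}. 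Then~\eqref{vlesepN-2exponthm} follows from the comparison principle applied on~$[t_0,t_0+\tau_\ep]$ between~$v_\ep$ and~$\Phi_\ep$, with~$\widetilde{x}_i^\ep:=\widetilde{x}_i^\ep(\tau_\ep)$ and~$\widetilde\varrho_\ep:=\phi_\ep(\tau_\ep)$.

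The main obstacle, as usual in these Peierls--Nabarro arguments, is verifying that~$\Phi_\ep$ is genuinely a supersolution of the nonlocal equation~\eqref{vepeq}, i.e.\ controlling the error committed when one plugs a superposition of layers into~$\I(\cdot)-\ep^{-2s}W'(\cdot)$. The fractional operator does not act well on sums, and the cross-interactions between distinct layers, together with the mismatch between~$W'$ evaluated at the sum versus the sum of~$W'$'s, must be estimated carefully; the nondegeneracy of~$W$ at the integers and the tail decay~$u(x)-H(x)=O(1/|x|^{2s})$ of the basic layer (from~\eqref{u} and the results of~\cite{psv, cs, cozzi} cited in the paper) are what make these errors of strictly smaller order than the restoring term~$\beta\phi_\ep/\ep^{2s}$, but only provided the centers remain separated — which is exactly why~\eqref{xi+1-xiboundmainthm1} is crucial and why one cannot let~$\tau_\ep$ be too large. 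A secondary technical point is the interplay with the stress term~$\frac{\ep^{2s}}{\beta}\sigma(t,x)$: its contribution to~$\I$ is~$O(\ep^{2s})$ times an order-$\ep^{-\alpha}$ Hölder-type quantity controlled by~\eqref{sigmaassump}, and since~$\alpha<1$ this is absorbed, but it must be tracked explicitly when closing the supersolution inequality. Once these error bounds are in place, the argument is a fairly direct iteration of the machinery already developed for Theorem~\ref{mainthmbeforecollNeven}.
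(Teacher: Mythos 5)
Your overall strategy (take \eqref{vlesepN-2} as initial datum at time $T_\ep$, build a supersolution with a scalar error that contracts on the time scale $\ep^{1+2s}$, apply the comparison principle) is the right one, and it is what the paper does in Section~\ref{R F 24}. But there is a genuine gap in the mechanism you propose for the contraction. You write that the linearization of $W'$ at the profile produces a restoring term with rate $\beta/\ep^{2s}$, yielding $\dot\phi_\ep\sim-\beta\ep^{-1-2s}\phi_\ep$. That estimate requires $W''$ to be bounded below by a positive constant \emph{at the value of the layer profile}, and this is simply false near the transitions: along each layer $u$ traverses the whole interval $(0,1)$, where a double-well $W$ has $W''<0$ near $v=\tfrac12$. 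So for $x$ at distance $O(\ep)$ from a surviving center, the term $\ep^{-2s}\bigl(W'(\text{profile}+\phi_\ep)-W'(\text{profile})\bigr)$ gives you only a Lipschitz bound $-C\ep^{-2s}\phi_\ep$, with the wrong sign, and your supersolution inequality fails exactly there. The missing idea is the one the paper uses: it does \emph{not} let the centers solve \eqref{dynamicalsysNeven}, but imposes the explicit outward drift
$$x_i(t)=x_i^\ep+\zeta_i K_\ep\varrho_\ep\bigl(e^{-\mu t/\ep^{2s+1}}-1\bigr),$$
coupled to the decaying error $\varrho_\ep e^{-\mu t/\ep^{2s+1}}$. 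The time derivative of this drift feeds the parabolic term $\ep h_t$ a contribution of size $\ep^{-2s-1}K_\ep\varrho_\ep\mu\,e^{-\mu t/\ep^{2s+1}}u'\bigl(\zeta_i(x-x_i(t))/\ep\bigr)$, and it is this strictly positive $u'$ term (bounded below by $c\,\ep^{(1-\alpha)(1+2s)}$ for $|x-x_{i_0}(t)|\le\ep^\alpha$, via \eqref{u'infinity}) — not the curvature of $W$ — that beats the bad terms near the layers once $K_\ep\mu$ is tuned as in \eqref{kepmuchoice}. Only away from all centers (Case~2 of the paper's Lemma~\ref{layerexponelem}, $|x-x_i(t)|\ge\ep^\alpha$ for all $i$) does the nondegeneracy $W''(0)=\beta>0$ give the restoring force you invoked, and there your reasoning is essentially the paper's.

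Two secondary points. First, your time-scale choice $\ep^{1+2s}\ll\tau_\ep\ll1$ is qualitatively right, but by itself it does not produce $\widetilde\varrho_\ep=o(\ep^{2s})$; the paper pins down $\tau_\ep$ by the explicit relation $\varrho_\ep e^{-\mu\tau_\ep/\ep^{2s+1}}=\ep^{2s+\gamma}$ for a carefully chosen $\gamma\in(0,\min\{4s(1-\alpha)-2s,\alpha(2s+1)-s\})$, and this exponent balancing is what makes both Case~1 and Case~2 close. Second, since the centers in the paper's barrier obey the explicit exponential formula rather than \eqref{dynamicalsysNeven}, the bound \eqref{xi-tildexiexpthm} comes from $|x_i(\tau_\ep)-x_i^\ep|\le K_\ep\varrho_\ep=o(1)$ directly (using \eqref{kepmuchoice} and $\varrho_\ep/\ep^s=o(1)$), not from integrating bounded ODE velocities over a short time as you suggest.
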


\subsection{The balanced orientation case}

Now we consider the case in which $K=N-K$, i.e.
the initial configuration presents as many positively oriented layers
as negatively oriented ones. In this case, we will use
Theorem \ref{thmexponentialdecayNeven} to construct a barrier for the
evolution of~$v_\ep$. Namely,
by an appropriate iteration of Theorem \ref{thmexponentialdecayNeven},
we show that, given any initial configuration of an equal number of
positive and negative initial dislocations, the system
relaxes to the trivial equilibrium (and the relaxation
times are exponential). The precise results are stated as follows:

\begin{thm}\label{Ncollisionevencor}
Assume that  \eqref{Wass}, \eqref{sigmaassump},  hold and that $$N=2K.$$ 
Let $v_\ep$ be the solution of \eqref{vepeq}-\eqref{vep03}.
Then there exist  $\overline{\sigma}>0$ and $\ep_0>0$, such that 
if 
\beq\label{sigmaNcollcond}\|\sigma\|_{\infty}\leq \bar{\sigma},\eeq then  
for any $\ep<\ep_0$ and any $(\zeta_1,\ldots,\zeta_N)\in\{-1,1\}^N$ such that $\sum_{i=1}^N\zeta_i=0$, there exist $\mathcal{T}_\ep^K,\Lambda_\ep^K>0$ such that 
\beq\label{vepNcoll}|v_\ep(\mathcal{T}_\ep^K,x)|\leq \Lambda_\ep^K,\quad\text{for any }x\in \R,\eeq and
\beq\label{lambdaK}\Lambda_\ep^K=o(1)\quad\text{as }\ep\to0.\eeq
\end{thm}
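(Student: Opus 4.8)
The plan is to deduce Theorem~\ref{Ncollisionevencor} from Theorem~\ref{thmexponentialdecayNeven} by an inductive collapse of the configuration, using the segregate case as a comparison barrier at every step. First I would set up the induction on~$K$, the number of positive layers (equivalently, half the total number of layers). The base case~$K=1$, i.e.\ $N=2$, is essentially the two-layer collision analysis: one positive and one negative layer attract, collide at some finite~$T_c$ (which is finite here because~$\|\sigma\|_\infty$ is taken small, so the hypothesis of the footnoted Lemma~\ref{coo1} is met after possibly shrinking~$\bar\sigma$), and right after the collision the solution~$v_\ep$ is squeezed between~$\pm\varrho_\ep$ modulo the stress term, which is~$O(\ep^{2s})$; letting this run a further short time as in the refinement gives~$|v_\ep|=o(1)$, so $\mathcal T_\ep^1$ and $\Lambda_\ep^1$ exist as claimed.

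For the inductive step, assume the statement holds for~$K-1$. Given a balanced configuration of~$N=2K$ layers with signs~$(\zeta_1,\dots,\zeta_N)$ summing to zero, I would first reduce to the segregate pattern~\eqref{kposin-knegzeta}: by a comparison argument (monotonicity of the evolution~\eqref{vepeq} in the initial datum, together with the ODE comparison for~\eqref{dynamicalsysNeven}), the given~$v_\ep$ is trapped between the segregate solution and its reflection, so it suffices to treat the segregate case — this is exactly the role the authors flag for the segregate case as furnishing the ``necessary barriers.'' In the segregate case, Theorem~\ref{thmexponentialdecayNeven} applies (note~$N=2K>2$ when~$K\ge2$, so its hypothesis~$N>2$ is satisfied): at time~$T_\ep+\tau_\ep$ the function~$v_\ep$ lies below a superposition of~$N-2=2(K-1)$ transition layers — the first~$K-1$ positively oriented, the last~$K-1$ negatively oriented — plus an error~$\widetilde\varrho_\ep=o(1)\ep^{2s}$, and by symmetry it lies above the mirror-image barrier. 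The right-hand barrier is itself (a small perturbation of) an admissible initial datum of the form~\eqref{vep03} for the balanced problem with~$K-1$ positive and~$K-1$ negative layers, with centers~$\widetilde x_i^\ep$ still separated by a uniform constant~$c>0$ by~\eqref{xi+1-xiboundmainthm1}–\eqref{xi-tildexiexpthm}.

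Now I would restart the clock at~$t=T_\ep+\tau_\ep$ and apply the inductive hypothesis to this new~$(K-1)$-balanced configuration, using the comparison principle for~\eqref{vepeq}: since~$v_\ep(T_\ep+\tau_\ep,\cdot)$ is squeezed between two translates/perturbations of admissible~$(K-1)$-data, the solution at the later time~$(T_\ep+\tau_\ep)+\mathcal T_\ep^{K-1}$ satisfies~$|v_\ep|\le \Lambda_\ep^{K-1}+(\text{transfer errors})$. Setting~$\mathcal T_\ep^K:=T_\ep+\tau_\ep+\mathcal T_\ep^{K-1}$ and~$\Lambda_\ep^K$ the sum of~$\Lambda_\ep^{K-1}$ with the accumulated~$o(1)$ error terms (each of which is~$o(1)$ by~\eqref{varroepmainthm}, \eqref{tauepvarromexpthm}, \eqref{xi-tildexiexpthm}) closes the induction, since a finite sum of~$o(1)$ quantities is~$o(1)$.

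The main obstacle — and the place where care is really needed — is the bookkeeping in the transfer step: one must verify that the barrier produced by Theorem~\ref{thmexponentialdecayNeven} is genuinely comparable, as a subsolution/supersolution sandwich, to a legitimate datum of the form~\eqref{vep03} (with the stress-correction term~$\ep^{2s}\sigma/\beta$ correctly matched), that the uniform separation~$c$ of the reduced centers is preserved uniformly in~$\ep$ so that the inductive constants do not degenerate, and that each application of the comparison principle across the ``restart'' is valid despite the small additive errors~$\widetilde\varrho_\ep$ (this may require absorbing those errors into a slightly enlarged~$\Lambda$, or using the stability of~\eqref{vepeq} under~$L^\infty$ perturbations of the datum, controlled via the potential bound~$W''(0)=\beta>0$ near the integer values). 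The constant~$\bar\sigma$ must be chosen small enough, uniformly through all~$K$ steps, to guarantee finite collision times at every stage via Lemma~\ref{coo1}; since there are only finitely many stages this is harmless, but it must be stated up front.
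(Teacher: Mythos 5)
Your proposal follows the same route as the paper: reduce an arbitrary balanced configuration to the segregate arrangement and its reflection via the comparison principle, then iteratively strip off two colliding layers at a time by combining Theorem~\ref{mainthmbeforecollNeven} with Theorem~\ref{thmexponentialdecayNeven}, treating $\sigma\equiv 0$ first and then passing to small $\|\sigma\|_\infty$ through the $\delta$-continuity in Proposition~\ref{tclimpropNeven}. Framing the iteration as an induction on $K$ is a cosmetic difference only.

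Two points of bookkeeping deserve a correction or a sharper statement. First, in your base case $K=1$ (equivalently, in the paper's final collision, going from two particles to none) Theorem~\ref{thmexponentialdecayNeven} is \emph{not} available, since it explicitly requires $N>2$; only Theorem~\ref{mainthmbeforecollNeven} applies there, and it yields an error $\varrho_\ep=o(1)$ but \emph{not} $o(\ep^{2s})$. Your remark that one ``lets this run a further short time as in the refinement'' to get $o(1)$ is therefore misplaced — the $o(1)$ already comes from \eqref{varroepmainthm} — and this is precisely why $\Lambda_\ep^K$ in Theorem~\ref{Ncollisionevencor} is only $o(1)$, in contrast with the sharper $o(\ep^{2s})$ bound on $\Lambda_\ep^{K-l}$ in Theorem~\ref{NcollisionN=2K+lcor}, where the process stops while $N>2$. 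Second, where you flag the danger in ``absorbing the transfer errors into a slightly enlarged $\Lambda$,'' the paper does something cleaner than an $L^\infty$-stability argument: setting $o^1_\ep:=\beta\widetilde\varrho_\ep/\ep^{2s}=o(1)$, it re-reads the right-hand side of \eqref{vlesepN-2exponthm} as an exact datum of the form \eqref{vep03} for the \emph{reduced} configuration with a new \emph{constant} stress $\sigma=o^1_\ep$, so that the comparison principle at the restart requires no perturbation of the datum. The accumulated stress contributions $\frac{\ep^{2s}}{\beta}(o^1_\ep+\cdots+o^{K-1}_\ep)$ together with the final $\varrho^K_\ep$ are what constitute $\Lambda_\ep^K$. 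With these two adjustments, your argument is the paper's.
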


\begin{thm}\label{Ncollisionevencorsigma0} Under the assumptions of Theorem~\ref{Ncollisionevencor}, if in addition $\sigma\equiv 0$, then there exist $\ep_0>0$ and $c>0$ such that for any $\ep<\ep_0$ we have 
\beq\label{vexpontozeroNeven}|v_\ep(t,x)|\leq \Lambda_\ep^K e^{c\frac{\mathcal{T}_\ep^K-t}{\ep^{2s+1}}},\quad\text{for any }x\in\R\text{ and }t\geq \mathcal{T}_\ep^K,\eeq where 
$\mathcal{T}_\ep^K$ 
and $\Lambda_\ep^K$ are given in Theorem~\ref{Ncollisionevencor}. 
 \end{thm}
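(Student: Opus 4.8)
The plan is to obtain \eqref{vexpontozeroNeven} by a straightforward comparison argument on the slab $[\mathcal{T}_\ep^K,+\infty)\times\R$, using spatially constant barriers that decay exponentially in time. The input is Theorem~\ref{Ncollisionevencor}: its hypotheses being in force, it gives, at the time $\mathcal{T}_\ep^K$, the bound $|v_\ep(\mathcal{T}_\ep^K,x)|\le\Lambda_\ep^K$ for all $x\in\R$, with $\Lambda_\ep^K=o(1)$ as $\ep\to0$. The structural property that makes the barriers work is the convexity of $W$ at its minimum: from \eqref{Wass} one has $W(0)=0$, $W'(0)=0$ (since $0$ is a minimum of the $C^3$ function $W$) and $W''(0)=\beta>0$; hence, by Taylor's formula, there is $\delta_0>0$ such that $W'(r)\ge\tfrac{\beta}{2}r$ for every $r\in(0,\delta_0]$ and $W'(r)\le\tfrac{\beta}{2}r$ for every $r\in[-\delta_0,0)$.

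Next I would shrink $\ep_0$ so that $\Lambda_\ep^K\le\delta_0$ for all $\ep<\ep_0$, put $c:=\tfrac{\beta}{2}$ and $\mu:=c\,\ep^{-(2s+1)}$, and define
\[
w^\pm(t,x):=\pm\,\Lambda_\ep^K\,e^{-\mu(t-\mathcal{T}_\ep^K)},\qquad t\ge\mathcal{T}_\ep^K,\ x\in\R.
\]
Since $w^\pm$ do not depend on $x$, one has $\I w^\pm\equiv0$ and $\partial_t w^\pm=-\mu\,w^\pm$; moreover $|w^\pm(t,x)|\le\Lambda_\ep^K\le\delta_0$ for all $t\ge\mathcal{T}_\ep^K$. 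Because $\sigma\equiv0$, checking that $w^+$ is a supersolution of \eqref{vepeq} amounts to verifying $-\mu\,w^+\ge-\ep^{-(2s+1)}W'(w^+)$, i.e. $\ep^{-(2s+1)}W'(w^+)\ge\mu\,w^+$; since $0<w^+\le\delta_0$ we have $W'(w^+)\ge\tfrac{\beta}{2}w^+$, and the inequality follows from $\mu=\tfrac{\beta}{2}\ep^{-(2s+1)}$. The check that $w^-$ is a subsolution is symmetric, using $W'(r)\le\tfrac{\beta}{2}r<0$ for $r\in[-\delta_0,0)$.

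Finally, at $t=\mathcal{T}_\ep^K$ we have, by Theorem~\ref{Ncollisionevencor}, $w^-(\mathcal{T}_\ep^K,x)=-\Lambda_\ep^K\le v_\ep(\mathcal{T}_\ep^K,x)\le\Lambda_\ep^K=w^+(\mathcal{T}_\ep^K,x)$ for all $x\in\R$. Invoking the comparison principle for the nonlocal parabolic equation \eqref{vepeq} on $[\mathcal{T}_\ep^K,+\infty)\times\R$ --- which applies here since $v_\ep$ and $w^\pm$ are all bounded --- yields $w^-(t,x)\le v_\ep(t,x)\le w^+(t,x)$ for every $t\ge\mathcal{T}_\ep^K$ and $x\in\R$, that is,
\[
|v_\ep(t,x)|\le\Lambda_\ep^K\,e^{-\mu(t-\mathcal{T}_\ep^K)}=\Lambda_\ep^K\,e^{c\frac{\mathcal{T}_\ep^K-t}{\ep^{2s+1}}},
\]
which is \eqref{vexpontozeroNeven}. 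There is no genuine obstacle in this scheme; the only points deserving care are the derivation of $W'(0)=0$ and of the one-sided bounds $W'(r)\ge\tfrac{\beta}{2}r$ for $r>0$ small and $W'(r)\le\tfrac{\beta}{2}r$ for $r<0$ small from \eqref{Wass}, and the application of the comparison principle on an unbounded spatial domain, which is harmless because every function involved is bounded and $\I$ kills the (spatially constant) barriers. If one prefers to avoid the ``barrier stays admissible'' remark, note that the constants $\pm\Lambda_\ep^K$ are themselves stationary super- and subsolutions, so that $v_\ep(t,\cdot)$ stays confined to $[-\Lambda_\ep^K,\Lambda_\ep^K]\subset[-\delta_0,\delta_0]$ for all $t\ge\mathcal{T}_\ep^K$, and only then compare with $w^\pm$.
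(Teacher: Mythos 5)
Your proof is correct and essentially the same as the paper's. The paper compares $v_\ep$ with the $x$-independent exact solution $\tilde h(t,x)=h\bigl(\tfrac{t-\mathcal{T}_\ep^K}{\ep^{2s+1}},\Lambda_\ep^K\bigr)$ of the ODE $h_\tau+W'(h)=0$ (which is an honest solution of \eqref{vepeq} with $\sigma\equiv0$ since $\I$ annihilates constants in $x$), and then bounds $h$ by $\Lambda_\ep^K e^{-\frac{\beta}{2}\tau}$ via the same one-sided estimate $W'(r)\ge\frac{\beta}{2}r$ for small $r>0$; you shortcut this by verifying directly that $\Lambda_\ep^K e^{-\frac{\beta}{2}\ep^{-(2s+1)}(t-\mathcal{T}_\ep^K)}$ is itself a supersolution, which is the same inequality applied at the barrier level rather than at the ODE level. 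Both yield the constant $c=\beta/2$, require $\Lambda_\ep^K$ small (i.e.\ $\ep<\ep_0$) so that the Taylor bound on $W'$ applies, and close with the comparison principle on $[\mathcal{T}_\ep^K,+\infty)\times\R$ using \eqref{vepNcoll} as the initial comparison; the symmetric argument for the lower bound is likewise identical.
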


We observe that the exponential decay (for large~$t$)
given in~\eqref{vexpontozeroNeven}
becomes stronger and stronger for small values of the positive parameter~$\ep$
(i.e. a small scale of the crystal favors the relaxation of
the system).\medskip

The situation analytically described in Theorems~\ref{Ncollisionevencor}
and~\ref{Ncollisionevencorsigma0} is depicted in Figure~2.
\bigskip

\begin{center}
  \includegraphics[width=1.01\textwidth]{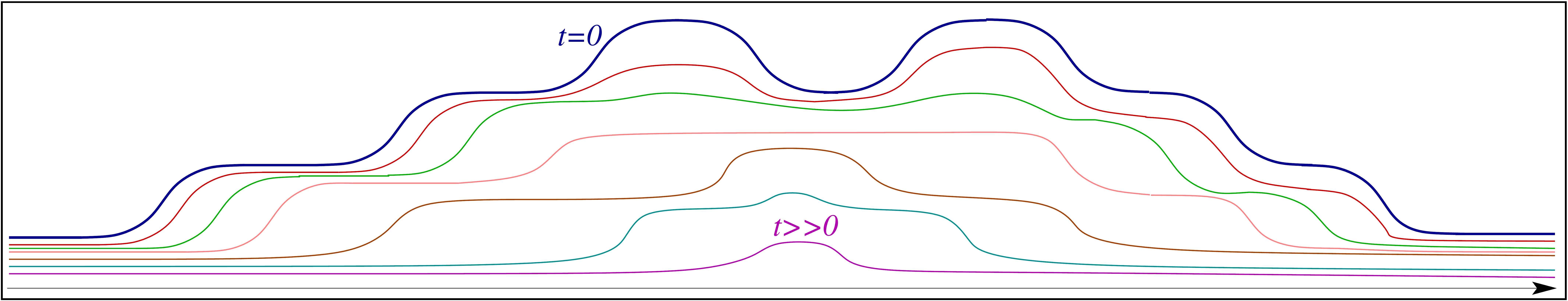}
{\footnotesize{
\nopagebreak$\,$\\
\nopagebreak
Figure 2 (balanced orientation case): Evolution of the dislocation function as described in
Theorems~\ref{Ncollisionevencor}
and~\ref{Ncollisionevencorsigma0}.}}
\end{center}
\bigskip
\bigskip

It is worth to point out that
the threshold~$\bar{\sigma}$ in~\eqref{sigmaNcollcond}
is obtained here by the method of continuity from the case~$\sigma\equiv0$;
of course, also in view of concrete applications, we think that
it is an interesting problem to obtain explicit quantitative
bounds on~$\bar{\sigma}$.
 
\subsection{The unbalanced orientation case}

Now we turn to the general case in which
the number of positive initial orientations is not necessarily the same
as the number of negative ones. In this case,
the limit configuration is either a constant or
a single transition, according to the parity of the difference
between positive and negative initial orientations.
The precise statements go as follows:

 \begin{thm}\label{NcollisionN=2K+lcor}
Assume that  \eqref{Wass}, \eqref{sigmaassump},  \eqref{vep03} hold and that $$N=2K-l,\quad l\in \N.$$
Let $v_\ep$ be the solution of \eqref{vepeq}-\eqref{vep03}. 
Then there exist~$\overline{\sigma}>0$ and $\ep_0>0$ such that 
if 
\beq\label{sigmaNcollN=2K+lcorcond}\|\sigma\|_{\infty}\leq \bar{\sigma},\eeq 
for any $\ep<\ep_0$ and any $(\zeta_1,\ldots,\zeta_N)\in\{-1,1\}^N$ such that $\sum_{i=1}^N\zeta_i=l$,  
there exist $\mathcal{T}_\ep^{K-l},\Lambda_\ep^{K-l}>0$, $\overline{x}_1^\ep,\ldots, \overline{x}_l^\ep$, $\underline{x}_1^\ep,\ldots, \underline{x}_l^\ep\in\R$, bounded with respect to $\epsilon$, with $\underline{x}_i^\ep\leq \overline{x}_i^\ep$, 
 such that  for any $x\in\R$ 
\beq\label{vepNcollN=2K+lcorabove}v_\ep(\mathcal{T}_\ep^{K-l},x)
\leq \displaystyle\frac{\ep^{2s}}{\beta}
\sigma(\mathcal{T}_\ep^{K-l},x)+ \sum_{i=1}^{l}u\left(
\displaystyle\frac{x-\underline{x}_i^\ep}{\ep}\right)+\Lambda_\ep^{K-l},\eeq 
and 
\beq\label{vepNcollN=2K+lcorbelow}v_\ep(\mathcal{T}_\ep^{K-l},x)\geq \displaystyle\frac{\ep^{2s}}{\beta}
\sigma(\mathcal{T}_\ep^{K-l},x)+ \sum_{i=1}^{l}u\left(
\displaystyle\frac{x-\overline{x}_i^\ep}{\ep}\right)-\Lambda_\ep^{K-l},\eeq 
where 
\beq\label{lambdak-lcor}\Lambda_\ep^{K-l}=o(\ep^{2s})\quad\text{as }\ep\to 0,\eeq
$u$ is the solution of \eqref{u} and $\beta$ is given by \eqref{beta}.
\end{thm}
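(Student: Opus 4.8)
The plan is to reduce the unbalanced case to the segregate and balanced cases already treated, by using Theorem~\ref{mainthmbeforecollNeven} and Theorem~\ref{thmexponentialdecayNeven} iteratively as a ``collision machine'' and by sandwiching $v_\ep$ between explicit barriers built out of superpositions of layers. First I would rearrange the bookkeeping: since $\sum_{i=1}^N\zeta_i=l$ we have $K$ positively oriented and $N-K=K-l$ negatively oriented transitions among the initial data in~\eqref{vep03}. The idea is that the $K-l$ negative layers will each be annihilated (in the ``smoothing'' sense of Theorem~\ref{mainthmbeforecollNeven}) against $K-l$ of the positive layers, leaving essentially $l$ positive layers plus an error that is $o(\ep^{2s})$; the remaining $l$ layers are exactly what appears in~\eqref{vepNcollN=2K+lcorabove}--\eqref{vepNcollN=2K+lcorbelow}.

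The key steps, in order: (i) Use the comparison principle for~\eqref{vepeq} (as in the segregate subsection's ``worst possible scenario'' remark) to bound $v_\ep^0$ above and below by configurations in which the orientations are segregated, i.e. where the first $K$ layers are positive and the last $K-l$ are negative; one must pay a small price $\varrho_\ep=o(\ep^{2s})$ in these initial comparisons, exploiting the repulsive/attractive structure of~\eqref{dynamicalsysNeven} and the strict separation of the $x_i^0$. (ii) Apply Theorem~\ref{mainthmbeforecollNeven} to the segregated upper barrier: a collision occurs between the last positive and first negative particle at a finite time $T_c^{(1)}$, and slightly after, at time $T_\ep^{(1)}=T_c^{(1)}+o(1)$, $v_\ep$ is dominated by a superposition of $N-2$ layers ($K-1$ positive, $K-l-1$ negative) plus $\varrho_\ep$. (iii) Invoke Theorem~\ref{thmexponentialdecayNeven} to absorb that $\varrho_\ep$ down to $o(\ep^{2s})$ after an extra time $\tau_\ep^{(1)}=o(1)$, at the cost of $o(1)$ displacements of the centers. (iv) Iterate steps (ii)--(iii) exactly $K-l$ times; after the $j$th round we have $K-j$ positive and $K-l-j$ negative layers. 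When $j=K-l$ the negative layers are exhausted and we are left with $l$ positive layers and an error $\Lambda_\ep^{K-l}=o(\ep^{2s})$; set $\mathcal{T}_\ep^{K-l}=\sum_{j=1}^{K-l}(T_\ep^{(j)}+\tau_\ep^{(j)})$ and let $\underline{x}_i^\ep$ be the resulting centers, bounded in $\ep$ because each round perturbs them by $o(1)$ and there are finitely many rounds. This gives~\eqref{vepNcollN=2K+lcorabove}. (v) For the lower bound~\eqref{vepNcollN=2K+lcorbelow}, run the mirror argument on the segregated lower barrier, which by the spatial-reflection symmetry noted in the introduction is handled the same way, producing centers $\overline{x}_i^\ep$; a final comparison at the common time $\mathcal{T}_\ep^{K-l}$ (waiting out the slower branch by the maximum principle, since a superposition of $l$ layers plus $o(\ep^{2s})$ is an approximate subsolution) yields the two-sided estimate with $\underline{x}_i^\ep\le\overline{x}_i^\ep$ after possibly relabeling.

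The main obstacle I expect is step (iv), the iteration: one must verify that the hypotheses of Theorems~\ref{mainthmbeforecollNeven} and~\ref{thmexponentialdecayNeven} are genuinely reinstated at the start of each round — in particular that the reduced configuration of $N-2j$ layers still has strictly ordered, uniformly separated centers (so~\eqref{xi+1-xiboundmainthm1} holds with a constant $c$ independent of the round) and that a collision still occurs in finite time, which needs the segregation condition~\eqref{kposin-knegzeta} together with the smallness $1-2s\theta_0^{2s}\|\sigma\|_\infty>0$ flagged in the footnote to Lemma~\ref{coo1}; this is where the threshold $\bar\sigma$ in~\eqref{sigmaNcollN=2K+lcorcond} comes from. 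A subtle point is that the $o(1)$ and $o(\ep^{2s})$ error rates must be chosen uniformly over the finitely many rounds and must not degrade when composed; since $K,l$ are fixed this is a finite induction, but the bookkeeping of the relaxation times and the $\ep_0$ thresholds has to be tracked carefully so that a single $\ep_0$ works for the whole chain.
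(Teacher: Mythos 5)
Your overall strategy matches the paper's: compare $v_\ep$ to upper and lower solutions with \emph{segregated} orientations, run the ``collision machine'' of Theorems~\ref{mainthmbeforecollNeven} and~\ref{thmexponentialdecayNeven} iteratively $K-l$ times until $l$ positive layers remain, and finally pass from $\sigma\equiv0$ to small $\|\sigma\|_\infty$ by the continuity of the ODE system in $\delta$ (Proposition~\ref{tclimpropNeven}). Your steps (ii)--(v) are the paper's argument, and your worries about reinstating the hypotheses at each round and keeping a single $\ep_0$ are legitimate finite-induction bookkeeping (the paper handles them by observing that the error $o_\ep^j$ produced after round $j$ enters the next round only through a term of size $O(\ep^{2s})/\beta$, which stays below the threshold needed in~\eqref{coo1}).

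The one genuine gap is step (i). You propose to bound $v_\ep^0$ by a segregated configuration ``at a small price $\varrho_\ep=o(\ep^{2s})$, exploiting the repulsive/attractive structure of the ODE and the strict separation of the $x_i^0$.'' This is not how the comparison works, and as stated it is unjustified: the ODE system plays no role at $t=0$, and there is no error to pay. The correct mechanism is a clean pointwise rearrangement inequality for the initial data. Concretely, if $1\le i<j\le N$ and $\zeta_i=-1$, $\zeta_j=+1$, then for $a:=\frac{x-x_i^0}{\ep}>b:=\frac{x-x_j^0}{\ep}$ one has
\begin{equation*}
u(-a)+u(b)\;\le\; u(a)+u(-b),
\end{equation*}
because $h(r):=u(r)-u(-r)$ is nondecreasing (since $u'>0$). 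Swapping repeatedly shows that among all assignments of $K$ positive and $K-l$ negative orientations to the fixed ordered centers $x_1^0<\dots<x_N^0$, the one with positives first maximizes the sum (giving $v_\ep^0\le w_\ep^0$ exactly) and the one with negatives first minimizes it (giving $v_\ep^0\ge z_\ep^0$ exactly). Without this observation you cannot start the iteration cleanly, and introducing an unspecified $o(\ep^{2s})$ term at $t=0$ both obscures the argument and requires justification you do not supply. Once this lemma is in place, the remainder of your proposal goes through as in the paper.
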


\begin{thm}\label{NcollisionN=2K+lcorinfinity}
Under the assumptions of Theorem~\ref{NcollisionN=2K+lcor}, if in addition $\sigma\equiv 0$, then there exists $\ep_0>0$ such that  for any $\ep<\ep_0$, we have:
for any
$R>0$ there exists $T_0>\mathcal{T}_\ep^{K-l}$ such that
  for any $|x|\le R$ and $t>T_0$,
\begin{itemize}
\item if $l=2m$, $m\in\N$, then
 \beq\label{vinfinitycorleven}- C\ep^{2s}(1+t)^{-\frac{2s}{2s+1}}\le v_\ep(t,x)-m\le  C\ep^{2s}(1+t)^{-\frac{2s}{2s+1}}.\eeq 
 
\item  If  $l=2m+1$, $m\in\N$, then
  \beq\label{vinfinitycorlodd1}\begin{split} v_\ep(t,x)\ge m+u\left(\displaystyle\frac{x-\overline{x}^\ep-\alpha_\ep[(1+t)^\frac{1}{1+2s}-1]}{\ep}\right)-C \ep^{2s}(1+t)^{-\frac{2s}{2s+1}},
   \end{split}\eeq
   and 
     \beq\label{vinfinitycorlodd2}v_\ep(t,x)\le m+u\left(\displaystyle\frac{x-\underline{x}^\ep+\alpha_\ep[(1+t)^\frac{1}{1+2s}-1]}{\ep}\right)+C\ep^{2s} (1+t)^{-\frac{2s}{2s+1}},\eeq
where  $u$ is the solution of \eqref{u}, $\alpha_\ep=o(1)$ as $\ep\to0$, 
 $\underline{x}^\ep,\,\overline{x}^\ep\in\R$ are bounded with respect to $\epsilon$ and  $\underline{x}^\ep\le\overline{x}^\ep$.
\end{itemize}
\end{thm}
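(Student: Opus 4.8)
The plan is to take as starting point the configuration furnished by Theorem~\ref{NcollisionN=2K+lcor}: at the time $\mathcal{T}_\ep:=\mathcal{T}_\ep^{K-l}$ the solution $v_\ep$ is trapped, up to an $o(\ep^{2s})$ additive error, between two superpositions of $l$ \emph{positively oriented} transition layers (recall $l=2K-N>0$). Since here $\sigma\equiv0$, from $t=\mathcal{T}_\ep$ onwards we only have to understand the purely \emph{repulsive} evolution of $l$ equioriented layers, which never collide. Concretely, for $t\ge\mathcal{T}_\ep$ I would build a subsolution and a supersolution of~\eqref{vepeq} of the form
\[
\sum_{i=1}^{l}u\!\left(\frac{x-x_i^{\pm}(t)}{\ep}\right)\pm\phi_\ep(t),
\]
where $x_1^{\pm}(t)<\dots<x_l^{\pm}(t)$ solve slightly perturbed versions of the system~\eqref{dynamicalsysNeven} written for $l$ equioriented particles with $\sigma\equiv0$, and $\phi_\ep(t)\ge0$ is a corrector that absorbs the layer interaction error. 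Checking that these are genuine super/subsolutions, and controlling $\phi_\ep$ by a linear differential inequality of the form $\dot\phi_\ep\le-c\,\ep^{-1-2s}\phi_\ep+(\text{source})$ (so that $\phi_\ep$ relaxes exponentially fast to a multiple of the source), is carried out exactly as in the scheme already used for Theorems~\ref{mainthmbeforecollNeven}--\ref{Ncollisionevencor} and in~\cite{gonzalezmonneau,dpv,dfv,pv2,pv3}; the comparison principle for~\eqref{vepeq} then squeezes $v_\ep$ between the two barriers.

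The core of the argument is the long-time analysis of the repulsive ODE system for $l$ equioriented particles. Here I would use that, when $\sigma\equiv0$, the center of mass $\frac1l\sum_i x_i(t)$ is conserved (the pairwise repulsive forces cancel when summed) and the potential is repulsive, so that the particles spread out and, as $t\to+\infty$,
\[
x_i(t)=a_i\,(1+t)^{\frac{1}{1+2s}}+o\!\left((1+t)^{\frac{1}{1+2s}}\right),
\]
where $a_1<\dots<a_l$ is the unique solution, odd-symmetric under $a_i\mapsto-a_{l+1-i}$, of the algebraic system obtained by inserting this self-similar ansatz into~\eqref{dynamicalsysNeven}. The symmetry forces the splitting: if $l=2m$ then exactly $m$ of the $a_i$ are negative and $m$ positive; if $l=2m+1$ then $a_{m+1}=0$ and the remaining ones split $m$--$m$. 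Using the known algebraic tail of the basic layer, namely $u(\xi)\sim c_s\,|\xi|^{-2s}$ as $\xi\to-\infty$ and $1-u(\xi)\sim c_s\,\xi^{-2s}$ as $\xi\to+\infty$ (see~\cite{gonzalezmonneau,dfv,cozzi}), one obtains that, for $|x|\le R$ and $t>T_0$ with $T_0=T_0(R,\ep)$ chosen so large that $\min\{|a_i|:a_i\ne0\}\,(1+t)^{1/(1+2s)}\gg R$, each escaping layer contributes its limiting integer value up to an error $\lesssim\ep^{2s}(1+t)^{-2s/(1+2s)}$. Summing, the $m$ layers running to $-\infty$ produce the integer $m$, the $m$ layers running to $+\infty$ produce $0$, and --- in the odd case --- the surviving central layer produces the extra term $u\big((x-y_\ep(t))/\ep\big)$, with $y_\ep(t):=x_{m+1}(t)$; this yields~\eqref{vinfinitycorleven} when $l=2m$ and~\eqref{vinfinitycorlodd1}--\eqref{vinfinitycorlodd2} when $l=2m+1$, once one also checks that the central particle drifts by at most $\alpha_\ep\big[(1+t)^{1/(1+2s)}-1\big]$ away from a fixed bounded interval, with $\alpha_\ep=o(1)$. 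The latter holds because, by the symmetry of the $a_i$, the net force on the central particle is of lower order than $(1+t)^{-2s/(1+2s)}$; the loss $\alpha_\ep=o(1)$, rather than a sharper rate, absorbs the $O(\ep)$ and $o(1)$ imprecision in the particle positions inherited from Theorem~\ref{NcollisionN=2K+lcor}.

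I expect two main obstacles. First, one must glue the ``collision regime'' estimate of Theorem~\ref{NcollisionN=2K+lcor}, whose error $\Lambda_\ep^{K-l}=o(\ep^{2s})$ need not decay in $t$, to the ``spreading regime'' in such a way that the \emph{total} error stays below $C\ep^{2s}(1+t)^{-2s/(1+2s)}$ for every $t>T_0$: this forces the corrector $\phi_\ep$ to be built with an explicitly $t$-decaying profile, exploiting that once the layers are well separated the interaction source decays like $(1+t)^{-1-2s/(1+2s)}$, so that $\phi_\ep\lesssim\ep^{2s}(1+t)^{-2s/(1+2s)}$ for large $t$ by the exponential relaxation of the corrector inequality. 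Second, the ODE analysis itself --- the self-similar asymptotics with enough precision, the existence, uniqueness and odd symmetry of the profile $(a_i)_{i=1}^{l}$, the sharp $m$--$m$ (respectively $m$--$m$--central) splitting, and the sublinear drift of the central particle --- is the only genuinely new analytic input beyond the barrier machinery of the previous sections, and is where the technical work concentrates. The local-in-$x$ character of the statement, and hence the dependence $T_0=T_0(R)$, simply reflects that far out the escaping layers are still visible, and introduces no additional difficulty.
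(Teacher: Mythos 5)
Your overall framework—squeeze $v_\ep$ between super- and sub-solutions made of $l$ equioriented layers plus a corrector, then let the layers escape—matches the paper's strategy, and the $\ep^{2s}(1+t)^{-2s/(1+2s)}$ error rate does come, as you say, from combining the algebraic tail of $u$ with a $(1+t)^{1/(1+2s)}$ layer separation. Where you deviate, however, there is a genuine gap.

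The core ODE input you propose, namely a self-similar asymptotic $x_i(t)=a_i(1+t)^{1/(1+2s)}+o\big((1+t)^{1/(1+2s)}\big)$ with a profile $(a_i)$ claimed to be \emph{the unique, odd-symmetric solution} of the algebraic balance system, is asserted rather than proved. Establishing existence, uniqueness and convergence to such a profile is nontrivial (the algebraic system is a coupled fixed-point problem), and the odd symmetry of the profile—which is precisely what you need for the $m$--$m$ (resp.\ $m$--$m$--central) splitting—is circular without first knowing uniqueness. Conservation of the center of mass, which you correctly note, is not enough by itself: it does not forbid, say, $m+1$ particles escaping to $-\infty$ and $m-1$ to $+\infty$ at different speeds. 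The paper (Lemma~\ref{Nevenxm+1-xmlemsameor}) sidesteps all of this much more cheaply: it starts from an \emph{equispaced} initial configuration, translates it to be symmetric about the origin, and then shows by ODE uniqueness that the solution remains symmetric for all time, so that for $l=2m+1$ the central particle is \emph{exactly} stationary (not asymptotically so), and for $l=2m$ the midpoint $x_{m+1}+x_m$ is exactly conserved. The spacing lower bound $x_{i+1}(t)-x_i(t)\ge k(1+t)^{1/(1+2s)}$ is then obtained by a nested comparison argument, with no self-similar profile in sight.

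A second, related misconception concerns the source of the drift $\alpha_\ep[(1+t)^{1/(1+2s)}-1]$ in the odd case. You attribute it to ``the net force on the central particle being of lower order,'' but in the symmetric, unforced ODE this force is exactly zero, not merely small. The drift in the paper is a barrier artifact: to make $\overline w_\ep$ a supersolution one inserts a decaying forcing $\delta'(t)=(\sigma_\ep+\delta_\ep)(1+t)^{-2s/(1+2s)}$, and the cumulative displacement of the whole configuration under that forcing is $\delta(t)-\delta(0)=\alpha_\ep[(1+t)^{1/(1+2s)}-1]$ exactly (via the change of variables $z_i=x_i+\delta$). This also explains why the corrector in the paper decays polynomially (through $\overline\sigma_\ep(t)\ep^{2s}$) rather than being an exponentially relaxing $\phi_\ep(t)$ as in your proposal. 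Your exponential-corrector variant might be made to work, but then you would have to re-derive the polynomial $t$-decay from a different bookkeeping; the paper's choice of letting the forcing itself decay like $(1+t)^{-2s/(1+2s)}$ is tailored to the inhomogeneous term $\delta'/\gamma$ that shows up in the $I_\ep$ expansion (see the Appendix), and it is not obvious your exponential relaxation would absorb it over the full unbounded time interval.

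In short: the barrier scheme and the rate bookkeeping are right, but replacing the paper's direct ODE symmetry/uniqueness argument by a self-similar-profile argument leaves the splitting and the central-particle position unproved, and the explanation you give for the drift $\alpha_\ep[(1+t)^{1/(1+2s)}-1]$ is inconsistent with the unforced ODE, where the central particle does not move at all.
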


\bigskip

\begin{center}
  \includegraphics[width=0.79\textwidth]{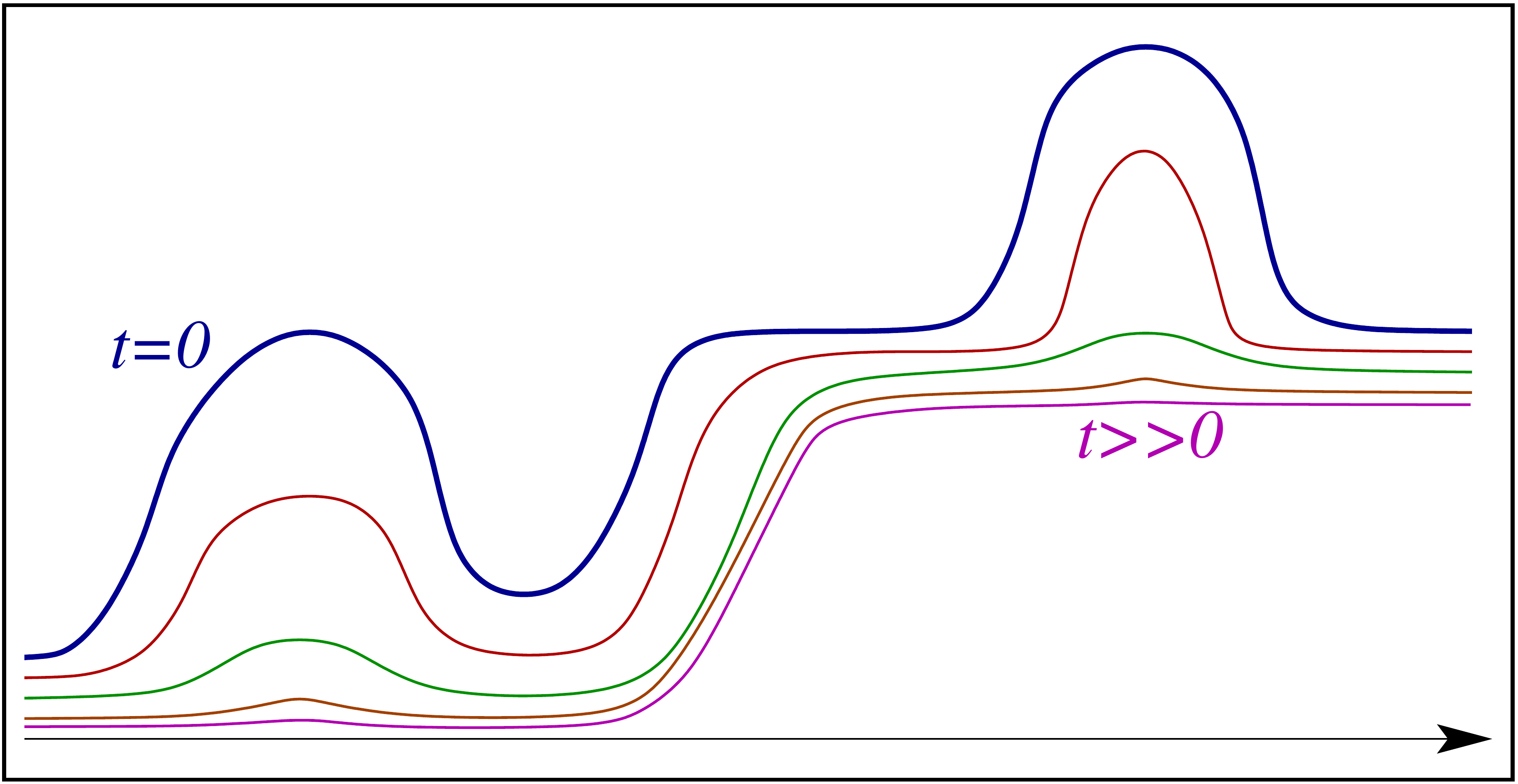}
{\footnotesize{\nopagebreak
$\,$\\
\nopagebreak Figure 3 (unbalanced orientation case): Evolution of
the dislocation function as described in
Theorems~\ref{NcollisionN=2K+lcor}
and~\ref{NcollisionN=2K+lcorinfinity} ($l$ odd, limit case: single 
transition).}}
\end{center}
\bigskip
\bigskip

The unbalanced case in which the dislocation function approaches
a single heteroclinic is depicted in Figures~3 and~4.
We also remark that the index~${K-l}$ in
Theorem~\ref{NcollisionN=2K+lcor} is related to the number
of iterations of Theorems~\ref{mainthmbeforecollNeven}
and~\ref{thmexponentialdecayNeven}
needed to perform its proof.
\medskip

In addition, we point out that there are some quantitative differences between
Theorem~\ref{Ncollisionevencorsigma0} and Theorem~\ref{NcollisionN=2K+lcorinfinity},
that is between the balanced and unbalanced orientation cases.

Indeed, when~$N=2K$ (i.e. $m=0$), the system relaxes
to zero exponentially fast, as given by~\eqref{vexpontozeroNeven}.
Conversely, when~$N\ne2K$, the relaxation times 
given in~\eqref{vinfinitycorleven},
\eqref{vinfinitycorlodd1} and~\eqref{vinfinitycorlodd2}
are only polynomial, due to the terms of order~$\ep^{2s} t^{-\frac{2s}{2s+1}}$
appearing in these formulas.

The fact is that, in the unbalanced orientation case,
the central points of the heteroclinics
which provide the barriers move and drifts to infinity:
for instance, in case~$m=1$, $N=K=2$, i.e. when two dislocations
with positive orientations are considered, the ODE system can
be solved explicitly and one sees that the distance between
the dislocations is of the order of~$t^{\frac{1}{1+2s}}$
(and this explains the term~$t^{\frac{1}{1+2s}}$ in the right hand sides
of~\eqref{vinfinitycorlodd1} and~\eqref{vinfinitycorlodd2}).

This quantitative remark also explains why the decay in time
in Theorem~\ref{NcollisionN=2K+lcorinfinity}
is polynomial (instead of exponential,
as it happens in Theorem~\ref{Ncollisionevencorsigma0}): indeed, the
heteroclinics mentioned above, which are centered
at distance~$O(t^{\frac{1}{1+2s}})$, possess a polynomial tail
(with power~$-2s$, see e.g. formula~(1.6)
in~\cite{dfv}): the (rescaled) combination of these two effects
produce an error of the form~$ \big( t^{\frac{1}{1+2s}}/\ep \big)^{-2s}$,
and this explains the term of order~$\ep^{2s} t^{-\frac{2s}{2s+1}}$
in~\eqref{vinfinitycorleven},
\eqref{vinfinitycorlodd1} and~\eqref{vinfinitycorlodd2}.
\bigskip

\begin{center}
  \includegraphics[width=0.79\textwidth]{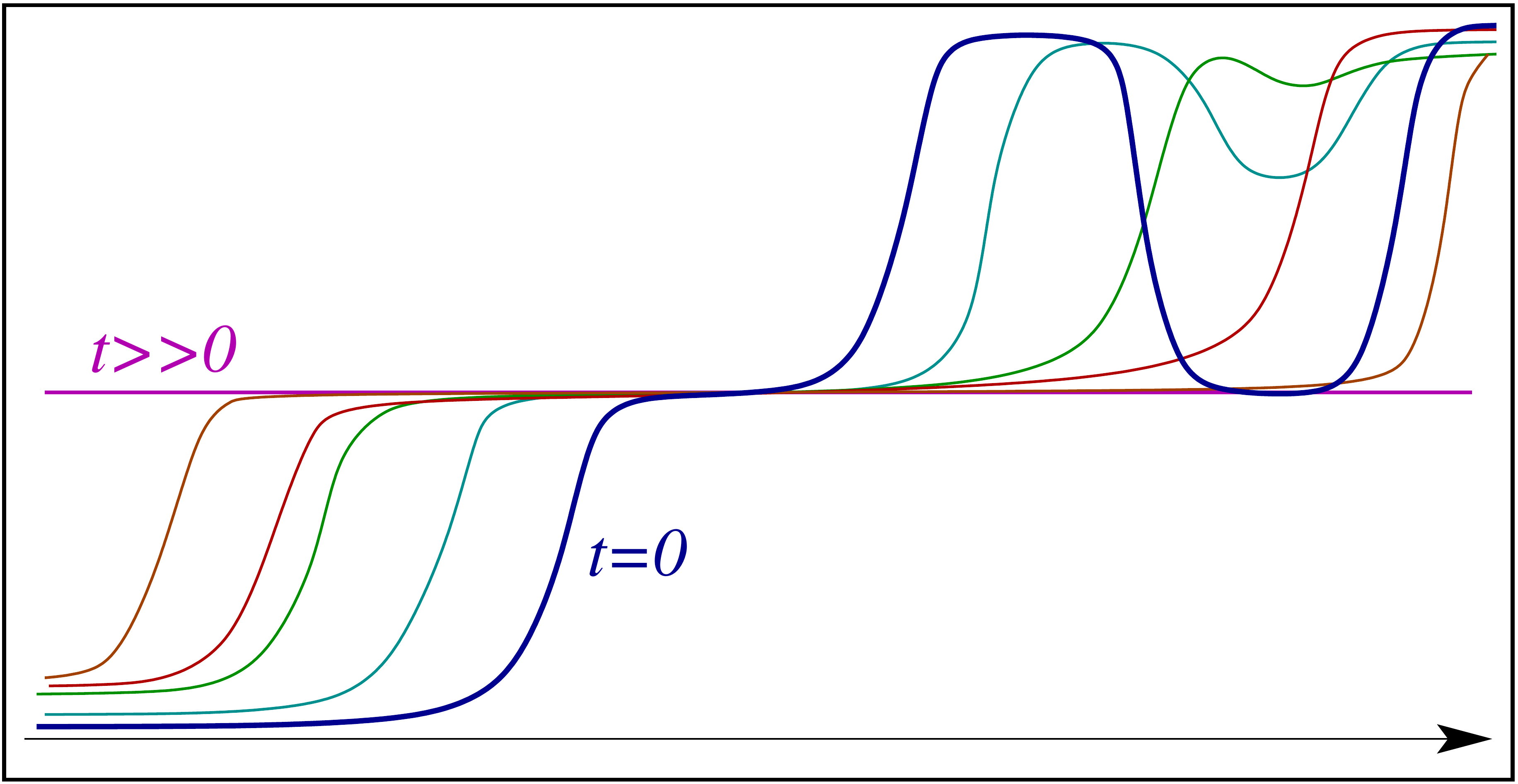}
{\footnotesize{\nopagebreak
$\,$\\
\nopagebreak Figure 4 (unbalanced orientation case): Evolution of 
the dislocation function as described in
Theorems~\ref{NcollisionN=2K+lcor}
and~\ref{NcollisionN=2K+lcorinfinity} ($l$ even, limit case:
constant).}}
\end{center}
\bigskip
\bigskip

\subsection{Equilibria of the dynamical system}

An interesting byproduct of our results
is that the particles in~\eqref{dynamicalsysNeven}
can never remain at rest, namely:

\begin{cor}\label{nostationarypoints}Assume that  \eqref{Wass}
holds true, that~$N\ge2$
and that~$\sigma\equiv 0$. Then the ODE system 
in~\eqref{dynamicalsysNeven} does not admit stationary points.
\end{cor}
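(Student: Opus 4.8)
The plan is to prove Corollary~\ref{nostationarypoints} by contradiction: suppose that $(x_1^0,\ldots,x_N^0)$ with $x_1^0<\cdots<x_N^0$ is a stationary point of the system~\eqref{dynamicalsysNeven} with $\sigma\equiv0$, i.e.
\beq\label{statcond}
\sum_{j\neq i}\zeta_i\zeta_j\frac{x_i^0-x_j^0}{2s\,|x_i^0-x_j^0|^{1+2s}}=0
\quad\text{for every }i=1,\ldots,N.
\eeq
I would then run the evolution~\eqref{vepeq} with initial datum $v_\ep^0$ given by~\eqref{vep03} for these $x_i^0$ and some admissible choice of orientations $\zeta_i\in\{-1,1\}$. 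Since the $x_i^0$ solve~\eqref{statcond}, the ODE system~\eqref{dynamicalsysNeven} is solved by the constant path $x_i(t)\equiv x_i^0$, so formally no collision ever occurs and $T_c=+\infty$. The idea is that this contradicts the long-time behavior dictated by Theorems~\ref{Ncollisionevencor}--\ref{NcollisionN=2K+lcorinfinity}, which force $v_\ep$ to relax (in finite or infinite time) to a configuration with strictly fewer transition layers than $N$.

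The natural way to organize the argument is according to the three orientation cases. In the \emph{balanced} case $N=2K$, one can choose the orientation vector so that $\sum\zeta_i=0$; Theorem~\ref{Ncollisionevencor} then gives a time $\mathcal T_\ep^K$ at which $|v_\ep(\mathcal T_\ep^K,x)|\le\Lambda_\ep^K=o(1)$ uniformly in $x$. But if the ODE never produced a collision, then the mesoscopic description of~\cite{gonzalezmonneau,dpv,dfv} would keep $v_\ep$ close (as $\ep\to0$) to the step function with $N\ge2$ jumps located at the fixed points $x_i^0$, and in particular $v_\ep$ could not become uniformly small — a contradiction. In the \emph{unbalanced} case $N=2K-l$ with $l\ge1$, the same strategy works using Theorems~\ref{NcollisionN=2K+lcor} and~\ref{NcollisionN=2K+lcorinfinity}: the limit profile is a single heteroclinic (or a constant), i.e.\ at most one transition layer, again incompatible with $N\ge2$ fixed jumps. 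Finally, the \emph{segregate} case is really a degenerate instance of the above (with a specific $\zeta$), and in fact the footnote after~\eqref{kposin-knegzeta} already asserts, via Lemma~\ref{coo1}, that under~\eqref{kposin-knegzeta} a collision occurs in finite time whenever $1-2s\theta_0^{2s}\|\sigma\|_\infty>0$, which holds trivially for $\sigma\equiv0$; so a stationary point with segregated orientations is excluded directly. The remaining subtlety is that for a \emph{given} fixed $(x_1^0,\ldots,x_N^0)$ satisfying~\eqref{statcond}, the orientations $\zeta_i$ are also fixed by hypothesis, and one must check that for those particular $\zeta_i$ the relevant theorem still applies; since Theorems~\ref{Ncollisionevencor} and~\ref{NcollisionN=2K+lcor} are stated for \emph{any} $(\zeta_1,\ldots,\zeta_N)$ with the prescribed value of $\sum\zeta_i$, this is automatic once one observes that $\sum\zeta_i$ is determined by $N$ and $K$, hence by $N$ and the choice of which value $v_\ep^0$ tends to at $+\infty$.

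Let me rephrase the contradiction more carefully, since that is the real content. If $x_i(t)\equiv x_i^0$ solves~\eqref{dynamicalsysNeven} on $(0,+\infty)$, then $T_c=+\infty$, and the known convergence result says that $v_\ep(t,\cdot)$ converges, locally uniformly away from the $x_i^0$, to the step function $\sum_{i}\zeta_i H(\cdot-x_i^0)-(N-K)$ (with $H$ the Heaviside function), uniformly for $t$ in compact time intervals; moreover the sign-preserving comparison principle for~\eqref{vepeq} shows that the ordering $x_{i+1}^0>x_i^0$ being strict for all times means $v_\ep$ retains $N$ genuine layers for all finite times in the $\ep\to0$ limit. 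On the other hand, the conclusions~\eqref{vepNcoll}, \eqref{vepNcollN=2K+lcorabove}--\eqref{vepNcollN=2K+lcorbelow} trap $v_\ep$, at the finite time $\mathcal T_\ep^K$ (resp.\ $\mathcal T_\ep^{K-l}$), between barriers built from at most $l<N$ layers up to an $o(1)$ error, which is impossible. The \textbf{main obstacle} I anticipate is making precise the claim ``$v_\ep$ has exactly $N$ layers for all finite time if the ODE has no collision'': one needs that $\mathcal T_\ep^K$ (or $\mathcal T_\ep^{K-l}$) stays bounded — or at least grows slowly enough — as $\ep\to0$, and that it is reached \emph{before} the mesoscopic ODE description breaks down. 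In fact the cleanest route is probably to avoid this altogether and instead note that the proofs of Theorems~\ref{mainthmbeforecollNeven}--\ref{NcollisionN=2K+lcor} themselves produce, whenever $\sigma\equiv0$ and $N\ge2$, an explicit \emph{first} collision time that is finite — because Lemma~\ref{coo1} (collisions always occur in finite time in the segregate sub-configuration) supplies the barrier that triggers the first annihilation regardless of the global orientation pattern. Then $T_c<+\infty$ contradicts $x_i(t)\equiv x_i^0$, since the constant solution has $x_{i+1}(t)-x_i(t)=x_{i+1}^0-x_i^0>0$ for all $t$, so no two particles ever meet. I would therefore structure the final write-up around invoking Lemma~\ref{coo1} (and, if needed, its analogue for the non-segregated patterns obtained by comparison, exactly as used in the proof of Theorem~\ref{NcollisionN=2K+lcor}) to conclude $T_c<+\infty$, which is the desired contradiction.
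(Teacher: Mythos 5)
Your first, abandoned strategy is actually in the right spirit and close to what the paper does; your ``cleanest route'' at the end, however, contains a genuine error and cannot work.

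The error in the final strategy: you want to conclude that $T_c<+\infty$ for the \emph{original} ODE system (the one with the stationary configuration's actual orientations $\zeta_i$), appealing to Lemma~\ref{T^1_cboundlemma} and to the comparison mechanism used in the proof of Theorem~\ref{NcollisionN=2K+lcor}. But Lemma~\ref{T^1_cboundlemma} proves collision in finite time \emph{only} for the segregated ODE system satisfying~\eqref{kposin-knegzeta}. The comparison used in the proof of Theorem~\ref{NcollisionN=2K+lcor} is at the \emph{PDE} level: one shows $v_\ep\le w_\ep$, where $w_\ep$ solves~\eqref{vepeq} with the segregated initial datum, and the segregated ODE system driving the barrier for $w_\ep$ does collide. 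This says nothing about the collision time of the original ODE system. It is not even true that every orientation pattern with $\sigma\equiv0$ and $N\ge2$ has $T_c<+\infty$: if all $\zeta_i$ agree the dynamics is purely repulsive and the particles spread out forever, so $T_c=+\infty$ and yet there is no stationary point (since for the rightmost particle the sum is strictly positive). So there is no way to prove Corollary~\ref{nostationarypoints} by showing $T_c<+\infty$ unconditionally, and the information used to trigger collisions in Theorems~\ref{Ncollisionevencor}--\ref{NcollisionN=2K+lcor} lives in the auxiliary segregated system, not in the given one.

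Your first strategy (compare the long-time asymptotics of $v_\ep$ from Theorem~\ref{NcollisionN=2K+lcorinfinity} with the mesoscopic limit $\lim_{\ep\to0}v_\ep(t,p)=\sum_i H(\zeta_i(p-x_i^0))-(N-K)$, valid for every $t>0$ when $T_c=+\infty$) is essentially the paper's argument. However, your sketch is missing the two steps that make it close. First, by looking at a single point $p$ to the left of all the particles (and of $\underline{x}^\ep,\overline{x}^\ep$), one deduces from~\eqref{vinfinitycorleven}--\eqref{vinfinitycorlodd2} together with the mesoscopic description that $m=0$, hence $l\in\{0,1\}$; without this reduction the ``incompatible with $N\ge2$ jumps'' conclusion is not immediate. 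Second, in the $l=1$ case the bounds \eqref{vinfinitycorlodd1}--\eqref{vinfinitycorlodd2} only force the step-function limit to take values in $\{m,m+1\}=\{0,1\}$ at the $N+1$ interleaving test points, which is \emph{compatible} with $N\ge2$ jumps, provided the orientations alternate; one then needs the separate input of Theorem~1.6 of~\cite{pv2} (alternating orientations always collide in finite time when $\sigma\equiv0$) to obtain the contradiction. So the $l$ odd case is not dispatched by ``$N$ jumps vs.\ one heteroclinic'' alone. Your flagged ``main obstacle'' about the $\ep$-dependence of $\mathcal T_\ep^{K-l}$ and $T_0$ is a genuine technical point, but it is addressed implicitly by the boundedness of the collision-time sums, and is secondary to the two conceptual gaps above.
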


It is worth to point out that a similar result does not hold for
infinitely many particles (an equilibrium being given
by alternate particles at the same distance).
It is also interesting to observe that our proof
of Corollary~\ref{nostationarypoints} is not based on ODE
methods, but on the analysis of the
integro-differential equation in~\eqref{vepeq},
which provides a further example of link between related,
but in principle different, topics, in terms of results,
motivations and methods.\medskip

The rest of the paper is organized as follows.
In Section~\ref{PP}
we collect a series of ancillary results, to be freely exploited
in the proofs of the main results.

Then, we prove
Theorem~\ref{mainthmbeforecollNeven} in Section~\ref{IAL9},
Theorem~\ref{thmexponentialdecayNeven} in Section~\ref{R F 24},
Theorems~\ref{Ncollisionevencor} 
and \ref{NcollisionN=2K+lcor} in Section~\ref{8jAJJa}, and
Theorems~\ref{Ncollisionevencorsigma0} 
and \ref{NcollisionN=2K+lcorinfinity} in Section~\ref{AKAaKA}.
Finally, Corollary~\ref{nostationarypoints} is proved in Section~\ref{FIF}.

\section{Preliminary observations}\label{PP}

\subsection{Toolbox}
In this section we recall some general auxiliary results that  will be used in the rest of the paper. In what follows we denote by $H$ the Heaviside function. 
 \begin{lem}\label{uinfinitylem} Assume that  \eqref{Wass} holds, then there exists a unique solution $u\in C^{2,\alpha}(\R)$ of \eqref{u}. Moreover,
  there exist   constants $C,c >0$ and $\kappa>2s$ (only depending on~$s$)
such that 
\begin{equation}\label{uinfinity}\left|u(x)-H(x)+\displaystyle\frac{1}{2sW''(0) 
}\displaystyle\frac{x}{|x|^{2s+1}}\right|\leq \displaystyle\frac{C}{|x|^{\kappa}},\quad\text{for }|x|\geq 1,
\end{equation} 
and
\begin{equation}\label{u'infinity}\frac{c}{|x|^{1+2s}}\leq u'(x)\leq \displaystyle\frac{C}{|x|^{1+2s}}\quad\text{for }|x|\geq 1.
\end{equation} 
\end{lem}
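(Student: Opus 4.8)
\textbf{Proof plan for Lemma \ref{uinfinitylem}.}

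The existence and uniqueness of the layer solution $u\in C^{2,\alpha}(\R)$ of \eqref{u} is by now classical and I would simply invoke the construction in~\cite{psv, cs, cozzi}: one minimizes the Peierls--Nabarro type energy, uses the nondegeneracy $W''(0)>0$ together with the periodicity and positivity of $W$ off $\Z$ to get a monotone minimizer connecting $0$ to $1$, and then regularity theory for the fractional Laplacian (Schauder estimates, using $W\in C^{3,\alpha}$) upgrades $u$ to $C^{2,\alpha}$. Uniqueness follows from a sliding / strong maximum principle argument. So the heart of the lemma is the asymptotic expansion \eqref{uinfinity} and the matching bound \eqref{u'infinity} on the derivative.

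For \eqref{uinfinity} the plan is as follows. Set, say for $x\to+\infty$, $w(x):=u(x)-1$, which solves $\I w=W'(u)$ and decays to $0$. Near the minimum $1$ of $W$ one has $W'(u)=W''(1)(u-1)+O((u-1)^2)=\beta\,w+O(w^2)$ by periodicity and $W''(1)=W''(0)=\beta$. The first step is a crude decay estimate: by comparison with suitable barriers built from the fundamental solution of $\I+\beta$ (or directly from the known polynomial tail of the Green function of the fractional Laplacian), one shows $|w(x)|\le C|x|^{-2s}$ for $|x|\ge1$; this handles the quadratic remainder, which is then $O(|x|^{-4s})$, negligible at the order we want. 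The second, sharper step is to extract the precise coefficient of the leading tail. Heuristically, $w$ solves $\I w=\beta w+(\text{lower order})$; testing against the equation, the conserved ``dipole moment'' forces $w(x)\sim -c_\star\,\mathrm{sgn}(x)|x|^{-2s}$ where the constant is pinned down by the normalization $\int_\R u'=1$ (this is exactly where $u(0)=\tfrac12$ and the connection $0\to1$ enter) and by the explicit kernel constant, giving the coefficient $\frac{1}{2sW''(0)}$. Concretely I would compare $u-H$ with the explicit function $\mp\frac{1}{2s\beta}\,\frac{x}{|x|^{2s+1}}$ plus error barriers of the form $A|x|^{-\kappa}$ with $2s<\kappa<\min\{4s,1+2s\}$ (decreasing $\kappa$ as needed so that $\I$ of the barrier dominates the source terms), and close the argument with the maximum principle. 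The choice of $\kappa>2s$ depending only on $s$ comes out of this balancing.

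For the gradient bound \eqref{u'infinity} I would differentiate the equation: $v:=u'>0$ solves the linearized equation $\I v=W''(u)\,v$. Since $W''(u)\to\beta>0$ at $\pm\infty$ and $v>0$, standard Harnack / barrier arguments for the linear nonlocal operator $\I-W''(u)$ give both the upper and lower bounds of order $|x|^{-1-2s}$; alternatively, and more cheaply, one differentiates the already-established expansion \eqref{uinfinity}. The subtle point is that differentiating an asymptotic expansion is not automatic, so I would instead get the upper bound $u'(x)\le C|x|^{-1-2s}$ by a barrier argument (comparing $u'$ against the derivative of the tail profile, using monotonicity and the decay of $u-H$), and the matching lower bound $u'(x)\ge c|x|^{-1-2s}$ from the Harnack inequality for $\I-W''(u)$ applied on dyadic annuli together with the fact that the total mass $\int u'=1$ cannot escape too fast given the tail rate in \eqref{uinfinity}.

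The main obstacle I anticipate is the second step in the proof of \eqref{uinfinity}: pinning down the exact constant $\frac{1}{2sW''(0)}$ in the leading-order tail, rather than merely an $O(|x|^{-2s})$ bound. This requires a careful analysis of the linear nonlocal equation $\I w=\beta w+f$ with $f$ of higher-order decay, identifying the precise interaction between the polynomial kernel of $\I$ and the zeroth-order term $\beta w$, and checking that the subleading correction genuinely has the faster rate $|x|^{-\kappa}$ with $\kappa>2s$ uniform in the problem data. Everything else (existence, uniqueness, regularity, and the derivative bounds once the expansion is in hand) is comparatively routine and can be cited from or adapted directly from~\cite{psv, cs, cozzi, dfv}.
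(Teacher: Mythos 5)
The paper does not actually prove this lemma: it defers entirely to the literature, citing \cite{psv,cs} for existence/uniqueness and regularity, \cite{gonzalezmonneau} ($s=\tfrac12$), \cite{dpv} ($s\in(\tfrac12,1)$), \cite{dfv} ($s\in(0,\tfrac12)$) for the expansion \eqref{uinfinity}, and \cite{cs} for the derivative bound \eqref{u'infinity}. So there is no in-paper argument to compare against; what you have written is an attempt to reconstruct the proofs in those references.

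Your overall scheme (crude $O(|x|^{-2s})$ decay via barriers, then a corrector argument to get the precise leading coefficient and a faster error rate $|x|^{-\kappa}$, then Harnack/barriers for $u'$) is indeed the shape of the proofs in \cite{gonzalezmonneau,dpv,dfv}. One correction, though: the mechanism you propose for pinning down the coefficient $\tfrac{1}{2sW''(0)}$ is not quite right. You appeal to a ``conserved dipole moment'' and to the normalization $\int_\R u'=1$, but $\int_\R u'=1$ is an automatic consequence of $u(-\infty)=0$, $u(+\infty)=1$ and does not encode the constant; likewise the condition $u(0)=\tfrac12$ only fixes the horizontal translation, not the tail amplitude. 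The constant is forced more directly: writing $u=H+v$ and computing explicitly $\I H(x)=-\tfrac{1}{2s}\,\tfrac{x}{|x|^{2s+1}}$ for $x\neq0$ (a one-line calculation with the kernel $|y|^{-1-2s}$), the equation $\I u=W'(u)$ becomes $-\tfrac{1}{2s}\,\tfrac{x}{|x|^{2s+1}}+\I v=W'(H+v)=W''(0)\,v+O(v^2)$; since $\I v$ and $v^2$ are subleading when $v\sim|x|^{-2s}$, matching leading orders gives $v\sim-\tfrac{1}{2sW''(0)}\,\tfrac{x}{|x|^{2s+1}}$, which is exactly \eqref{uinfinity}. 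This is what the ``careful analysis of $\I w=\beta w+f$'' you flag as the main obstacle boils down to, and it is considerably more concrete than the dipole-moment heuristic. Your treatment of $u'$ (barrier for the upper bound, Harnack on dyadic annuli plus mass for the lower bound, rather than naively differentiating the expansion) is a sound plan and matches the spirit of \cite{cs}.
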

\begin{proof} The existence of a unique solution of \eqref{u} is proven in \cite{psv}, see also \cite{cs}. Estimate \eqref{uinfinitylem} is proven in \cite{gonzalezmonneau} for $s=\frac{1}{2}$ and in \cite{dpv}, \cite{dfv} respectively  for $s\in\left(\frac{1}{2},1\right)$ and  $s\in\left(0,\frac{1}{2}\right)$. Finally, estimate \eqref{u'infinity} is shown in \cite{cs}.
\end{proof}

Next, we introduce the function $\psi$ to be the solution of
\beq\label{psi}\begin{cases}
\I\psi-W''(u)\psi=u'+\eta(W''(u)-W''(0))&\text{in }\R\\
\psi(-\infty)=0=\psi(+\infty),
\end{cases}\eeq where $u$ is the solution of \eqref{u} and
\beq\label{eta}\eta:=\displaystyle\frac{1}{W''(0)}
\displaystyle\int_\R(u'(x))^2dx
=\displaystyle\frac{1}{\gamma \beta}.\eeq
For a detailed heuristic motivation of
equation \eqref{psi},
see Section 3.1 of \cite{gonzalezmonneau}.
For later purposes, we recall the following decay estimate
on the solution of~\eqref{psi}:

\begin{lem} Assume that  \eqref{Wass} holds, then there exists a unique solution $\psi$ to \eqref{psi}. Furthermore 
$\psi\in C^{1,\alpha}_{loc}(\R)\cap L^\infty(\R)$ for some $\al\in(0,1)$ and there exists $C>0$ such that for any $x\in\R$
\beq\label{psi'infty}|\psi'(x)|\le \frac{C}{1+|x|^{1+2s}}.\eeq
\end{lem}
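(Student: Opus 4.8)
The plan is to establish existence and uniqueness for the linear nonlocal equation~\eqref{psi} by a fixed-point/Fredholm-type argument, and then to bootstrap regularity and deduce the decay estimate~\eqref{psi'infty} by differentiating the equation and exploiting the known asymptotics of~$u$ and~$u'$ from Lemma~\ref{uinfinitylem}. First I would set up the linear operator $\mathcal{L}\phi := \I\phi - W''(u)\phi$ acting on a suitable weighted space. Since $W''(u(x))\to W''(0)=\beta>0$ as $|x|\to\infty$ and $W''(0)>0$, the operator $\mathcal{L}$ behaves at infinity like $\I - \beta$, which is invertible (it has a strictly negative symbol $-|\xi|^{2s}-\beta$). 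The zeroth-order term $W''(u)-\beta$ is a bounded perturbation that decays like $|x|^{-2s}$ (by~\eqref{uinfinity}, since $u(x)-H(x)=O(|x|^{-2s})$ and $W\in C^{3,\alpha}$), so one can either invoke a Fredholm alternative after writing $\mathcal{L} = (\I-\beta)(\mathrm{Id} + (\I-\beta)^{-1}(\beta - W''(u)))$, or work directly. The right-hand side $f := u' + \eta(W''(u)-W''(0))$ is continuous and integrable (it decays like $|x|^{-(1+2s)} + |x|^{-2s}$, so it is $O(|x|^{-2s})$); the specific choice of $\eta$ in~\eqref{eta} is designed precisely to make $f$ orthogonal to the kernel element $u'$ of the formal adjoint (note $\mathcal{L}u' = 0$ by differentiating~\eqref{u}), which is the solvability condition. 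So the existence part reduces to checking this orthogonality: $\int_\R f u' = \int_\R (u')^2 + \eta \int_\R (W''(u)-W''(0)) u' = \int_\R (u')^2 + \eta(W''(0)-W''(0)) = \int_\R(u')^2$...

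wait, let me reconsider — $\int_\R (W''(u)-W''(0))u'\,dx = [W'(u)-W''(0)u]_{-\infty}^{+\infty} = (W'(1)-W''(0)) - (W'(0)-0) = -W''(0)$ since $W'(0)=W'(1)=0$ by periodicity and minimality. Hence $\int_\R f u' = \int_\R(u')^2 - \eta W''(0) = \int_\R(u')^2 - \int_\R(u')^2 = 0$, confirming the solvability condition is met. Uniqueness follows since the kernel of $\mathcal{L}$ on the decaying space is spanned by $u'$ alone (a nondegeneracy fact that should be available from~\cite{gonzalezmonneau} or provable by a sliding/maximum-principle argument using $u'>0$), and one normalizes by imposing, say, $\int_\R \psi u' = 0$, or alternatively the vanishing-at-infinity condition already pins down $\psi$ modulo $\mathbb{R}u'$ and we select the representative orthogonal to $u'$; in fact $\psi(\pm\infty)=0$ together with $u'(\pm\infty)=0$ does not immediately remove the ambiguity, so I would state that $\psi$ is the unique decaying solution orthogonal to $u'$, matching the convention in~\cite{gonzalezmonneau}.

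For the regularity, once $\psi\in L^\infty$ is known, elliptic regularity for the fractional Laplacian (Schauder estimates: $\I\psi = W''(u)\psi + f$ with right-hand side in $C^{0,\alpha}_{loc}$) gives $\psi\in C^{2s+\alpha}_{loc}$, and iterating, since $W''(u)$ and $u'$ are $C^{1,\alpha}$, one reaches $\psi\in C^{1,\alpha}_{loc}$. For the decay~\eqref{psi'infty}, I would differentiate~\eqref{psi} to get $\I\psi' - W''(u)\psi' = f' + W'''(u)u'\psi =: g$, where $g$ decays like $|x|^{-(1+2s)}$ (since $f' = u'' + \eta W'''(u)u' = O(|x|^{-(2+2s)}) + O(|x|^{-(1+2s)})$ using $u'=O(|x|^{-(1+2s)})$ and, for $u''$, differentiating the asymptotics in~\eqref{uinfinity}; and $W'''(u)u'\psi = O(|x|^{-(1+2s)})$ since $\psi\in L^\infty$). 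Then I would construct a barrier: the function $\zeta(x) = A(1+|x|^2)^{-(1+2s)/2}$ satisfies $\I\zeta - \beta\zeta \le -c_0\zeta \le -c_1(1+|x|)^{-(1+2s)}$ for $|x|$ large (this is the standard computation that $\I$ applied to such a tail gives a comparable tail, cf.~the decay in~\cite{dfv}), and for $|x|$ large $W''(u)$ is close to $\beta$ so $\I\zeta - W''(u)\zeta \le -\tfrac{c_1}{2}(1+|x|)^{-(1+2s)} \le -|g|$ for $A$ large; combined with a maximum principle for $\I - W''(u)$ (valid since $W''(u)>0$ near infinity, with the region $|x|\le R_0$ handled by the $L^\infty$ bound and boundedness of $\psi'$ there) this yields $|\psi'(x)| \le \zeta(x)$, i.e.~\eqref{psi'infty}.

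The main obstacle I anticipate is twofold: first, the nondegeneracy of the linearized operator $\mathcal{L}$ (that $\ker\mathcal{L} = \mathbb{R}u'$ on the appropriate space) — this is the crux of the uniqueness and is genuinely the delicate point for nonlocal operators, though it is by now classical for the Peierls–Nabarro linearization and I would simply cite~\cite{gonzalezmonneau} (and~\cite{psv,cs} for the underlying layer theory); and second, making the barrier argument for the decay of $\psi'$ fully rigorous, which requires a careful fractional maximum principle with the slowly-decaying (hence non-integrable-derivative) obstacle and an honest estimate of $\I$ acting on the polynomial tail $(1+|x|^2)^{-(1+2s)/2}$ — the latter is a routine but somewhat technical computation of the type already carried out in~\cite{dpv,dfv}. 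Everything else (computing $\int_\R f u' = 0$, Schauder bootstrap, decay of $g$) is straightforward bookkeeping using Lemma~\ref{uinfinitylem} and the hypotheses~\eqref{Wass}.
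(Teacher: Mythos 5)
The paper gives no proof of this lemma: it is a two-sentence citation (existence/uniqueness to \cite{gonzalezmonneau, dpv, dfv}, the decay \eqref{psi'infty} to \cite{pv}). So your proposal is a reconstruction of those sources rather than an alternative route. The Fredholm setup is the right one, the orthogonality computation $\int_\R f u'\,dx = \int_\R (u')^2 - \eta W''(0) = 0$ is exactly why $\eta$ in \eqref{eta} is defined as it is, and your observation about uniqueness is correct and worth flagging: differentiating \eqref{u} gives $\I u' = W''(u)u'$, so $u'$ is a nontrivial kernel element of $\I - W''(u)$ which also vanishes at $\pm\infty$; \eqref{psi} therefore determines $\psi$ only modulo $\R u'$, and the lemma's ``unique'' inherits, silently, the normalization convention of \cite{gonzalezmonneau}.

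The genuine gap is in the barrier for $\psi'$. You claim $\zeta(x) := A(1+|x|^2)^{-(1+2s)/2}$ satisfies $\I\zeta - \beta\zeta \le -c_0\zeta$ for $|x|$ large, but this fails for general $W$. Because $\zeta\in L^1(\R)$, the leading asymptotics of $\I\zeta(x)$ as $|x|\to\infty$ is not the Taylor/second-difference term $O(|x|^{-1-4s})$ but the ``far tail'' contribution
$\int_{|y-x|\gtrsim|x|}\zeta(y)\,|x-y|^{-(1+2s)}\,dy \sim \bigl(\int_\R\zeta\bigr)\,|x|^{-(1+2s)}$,
which is \emph{positive} and of exactly the same order as $\beta\zeta(x)\sim\beta A\,|x|^{-(1+2s)}$. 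Since $\int_\R\zeta = A\,c_s$ with $c_s := \int_\R(1+x^2)^{-(1+2s)/2}\,dx$ independent of $A$, one finds $\I\zeta - \beta\zeta \sim (c_s-\beta)A\,|x|^{-(1+2s)}$, whose sign is governed by $c_s - \beta$ and cannot be made negative by enlarging $A$. Whenever $\beta = W''(0) < c_s$ your $\zeta$ is a \emph{sub}solution at infinity, not a supersolution, and the comparison argument collapses. (In the model $W(v)=1-\cos(2\pi v)$ with $s=\tfrac12$ one has $\beta=4\pi^2>c_{1/2}=\pi$, so the defect is invisible in that case, but the lemma assumes only \eqref{Wass}.) The standard repairs are (i) to bootstrap via the Green's function $G_\beta$ of $\I-\beta$, which decays like $|x|^{-(1+2s)}$, writing $\psi' = G_\beta * \bigl[g + (W''(u)-\beta)\psi'\bigr]$ with $W''(u)-\beta = O(|x|^{-2s})$ by \eqref{uinfinity}, and iterating $\psi'\in L^\infty\Rightarrow O(|x|^{-2s})\Rightarrow\cdots\Rightarrow O(|x|^{-(1+2s)})$; or (ii) to use a barrier modeled on $u'$ itself (which has the sharp profile $\sim|x|^{-(1+2s)}$ and lies in $\ker(\I-W''(u))$) corrected by a faster-decaying term. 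Either of these is what \cite{pv} effectively does; labeling the step ``routine'' understates that a naive polynomial supersolution is quantitatively false here. The remainder of your sketch (decay of $g$, Schauder bootstrap to $C^{1,\alpha}_{loc}$) is sound.
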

\begin{proof}
The existence of a unique solution of  \eqref{psi} is proven in  \cite{gonzalezmonneau} for $s=\frac{1}{2}$  and  in \cite{dpv},  
\cite{dfv} respectively for $s\in\left(\frac{1}{2},1\right)$ and $s\in\left(0,\frac{1}{2}\right)$. Estimate \eqref{psi'infty} is shown in \cite{pv}.
\end{proof}

\section{Proof of Theorem \ref{mainthmbeforecollNeven}}\label{IAL9}

 Let $(x_1(t),\ldots,x_N(t))$ be  the solution of \eqref{dynamicalsysNeven}, where the $\zeta_i$'s are given by \eqref{kposin-knegzeta}. 
Let us denote, for $i=1,\ldots, N-1$
\beqs \theta_i(t):=x_{i+1}(t)-x_{i}(t),\eeqs and 
\beqs \theta_i^0:=x_{i+1}^0-x_{i}^0.\eeqs
Let us start by showing that if the assumption \eqref{coo1} below is satisfied, then the condition $T_c<+\infty$ holds true  and a collision occurs between the particles $x_K$ and $x_{K+1}$. 
 \begin{lem}\label{T^1_cboundlemma} 
 Assume that 
 \beq\label{coo1}1-2s(\theta_{K}^0)^{2s}\|\sigma\|_\infty>0.\eeq Then 
 $\theta_{K}(t)$ is decreasing and there exists $T_c$ satisfying 
 \beq\label{T^1_cbound}T_c\leq\frac{s(\theta_{K}^0)^{1+2s}}{(2s+1)\gamma(1-2s(\theta_{K}^0)^{2s}\|\sigma\|_\infty)} ,\eeq
 such that 
 $$\theta_{K}(T_c)=0.$$
 \end{lem}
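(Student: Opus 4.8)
The plan is to follow the single gap $\theta_K(t):=x_{K+1}(t)-x_K(t)$ along the solution of \eqref{dynamicalsysNeven} on the interval $[0,T_c)$ where that system is defined (i.e.\ before the first collision, so that the ordering $x_1<\dots<x_N$ holds there), and to show that, under \eqref{coo1}, this gap is driven down to $0$ no later than the time in \eqref{T^1_cbound}. First I would differentiate and substitute \eqref{dynamicalsysNeven}, using the segregate orientations \eqref{kposin-knegzeta}, so that $\zeta_K=1$ and $\zeta_{K+1}=-1$. Abbreviating the interaction kernel by $\Phi(r):=\frac{r}{2s|r|^{1+2s}}$, this gives
\[
\dot\theta_K=\gamma\Big(-\frac{1}{s\,\theta_K^{2s}}-\sum_{j\notin\{K,K+1\}}\zeta_j\big(\Phi(x_{K+1}-x_j)+\Phi(x_K-x_j)\big)+\sigma(t,x_{K+1})+\sigma(t,x_K)\Big),
\]
where $-\gamma/(s\,\theta_K^{2s})$ is the (attractive, since $\zeta_K\zeta_{K+1}=-1$) contribution of the pair $(x_K,x_{K+1})$ itself. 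The key point is that every term $\zeta_j\big(\Phi(x_{K+1}-x_j)+\Phi(x_K-x_j)\big)$ in the remaining sum is $\ge0$: for $j<K$ both arguments are positive and $\zeta_j=1$, while for $j>K+1$ both are negative and $\zeta_j=-1$ (here I use the ordering on $[0,T_c)$ and that $\Phi$ has the sign of its argument). Since these terms enter with a minus sign, they only lower $\dot\theta_K$; discarding them and bounding the stress by $|\sigma(t,x_{K+1})+\sigma(t,x_K)|\le 2\|\sigma\|_\infty$, I obtain
\[
\dot\theta_K\le-\frac{\gamma}{s\,\theta_K^{2s}}\big(1-2s\,\theta_K^{2s}\|\sigma\|_\infty\big)\qquad\text{on }[0,T_c).
\]

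Next I would close the estimate by a continuity argument. Set $\kappa_0:=1-2s(\theta_K^0)^{2s}\|\sigma\|_\infty$, which is positive — this is exactly assumption \eqref{coo1} — so the right-hand side above is strictly negative at $t=0$ and $\theta_K$ starts strictly decreasing. If $\theta_K$ ever returned to the value $\theta_K^0$, at the first such time $t_\ast>0$ one would have $\dot\theta_K(t_\ast)\ge0$; but plugging $\theta_K(t_\ast)=\theta_K^0$ into the inequality gives $\dot\theta_K(t_\ast)\le-\gamma\kappa_0/\big(s(\theta_K^0)^{2s}\big)<0$, a contradiction. Hence $\theta_K(t)\le\theta_K^0$ for all $t\in[0,T_c)$, so $1-2s\theta_K(t)^{2s}\|\sigma\|_\infty\ge\kappa_0>0$ there, whence $\dot\theta_K<0$ (the monotonicity assertion) and, more quantitatively, $\dot\theta_K\le-\gamma\kappa_0/(s\,\theta_K^{2s})$. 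Rewriting this as $\frac{d}{dt}\theta_K^{2s+1}\le-\frac{(2s+1)\gamma\kappa_0}{s}$ and integrating from $0$ yields $\theta_K(t)^{2s+1}\le(\theta_K^0)^{2s+1}-\frac{(2s+1)\gamma\kappa_0}{s}\,t$ on $[0,T_c)$. Since the left-hand side is positive there, so is the right-hand side, which forces $T_c\le\frac{s(\theta_K^0)^{2s+1}}{(2s+1)\gamma\kappa_0}$; recalling the value of $\kappa_0$, this is precisely \eqref{T^1_cbound}, and in particular $T_c<+\infty$, so a collision occurs and, by continuity of $\theta_K$, the pair $(x_K,x_{K+1})$ collides at $T_c$.

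I expect the only part needing real care to be the bootstrap keeping $\theta_K\le\theta_K^0$ along the flow: this is exactly what makes the coercivity factor $1-2s\theta_K^{2s}\|\sigma\|_\infty$ stay bounded below by $\kappa_0>0$, and the whole argument collapses once that factor can turn nonpositive — which is why \eqref{coo1} is imposed on $\theta_K^0$ (the interface gap) rather than on the diameter of the whole initial configuration. The sign bookkeeping in the first step and the ODE integration in the second are routine. One further, milder point, if one wants the collision at $T_c$ to be \emph{exactly} the one between $x_K$ and $x_{K+1}$ (and not merely $T_c<+\infty$ with $\theta_K(T_c)=0$), is to rule out that a different gap $\theta_i$, $i\neq K$, vanishes earlier on $[0,T_c)$; this can be obtained by an estimate of the same flavor — within each block of equally oriented particles the mutual interaction is repulsive, so those gaps cannot be pinched to zero before $x_K$ meets $x_{K+1}$ — or quoted from the collision analysis of \cite{pv2}.
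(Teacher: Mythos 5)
Your proof is correct and follows essentially the same route as the paper's: both isolate the self-attraction of the pair $(x_K,x_{K+1})$, discard the sign-favorable interactions with the remaining particles to get $\dot\theta_K\le\gamma\bigl(-\tfrac{1}{s\theta_K^{2s}}+2\|\sigma\|_\infty\bigr)$, bootstrap $\theta_K(t)\le\theta_K^0$ so the coercivity factor stays $\ge1-2s(\theta_K^0)^{2s}\|\sigma\|_\infty>0$, and then integrate $\theta_K^{2s}\dot\theta_K$ to obtain \eqref{T^1_cbound}. The only cosmetic difference is the bootstrap step---the paper compares with the constant stationary solution $\theta_s=(2s\|\sigma\|_\infty)^{-1/2s}$ of the majorizing ODE and notes \eqref{coo1} is exactly $\theta_K^0<\theta_s$, whereas you run a first-crossing contradiction against the level $\theta_K^0$; these are interchangeable. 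Your closing remark that one must still rule out a different gap $\theta_i$, $i\ne K$, vanishing earlier is well placed: the paper treats that separately in the unnamed lemma immediately following, which shows $\theta_i(t)\ge c>0$ on $[0,T_c]$ for $i\ne K$.
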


\begin{proof}
From  \eqref{dynamicalsysNeven} and \eqref{kposin-knegzeta}, we infer that
\beqs\begin{split}
\dot{\theta}_K&=\dot{x}_{K+1}-\dot{x}_K\\&=\gamma\left(
\displaystyle\sum_{j\neq K+1}\zeta_{K+1}\zeta_j 
\displaystyle\frac{x_{K+1}-x_j}{2s |x_{K+1}-x_j|^{1+2s}}-\zeta_{K+1}\sigma(t,x_{K+1})\right.\\&-\left.
\displaystyle\sum_{j\neq K}\zeta_{K}\zeta_j 
\displaystyle\frac{x_{K}-x_j}{2s |x_{K}-x_j|^{1+2s}}+\zeta_{K}\sigma(t,x_{K})\right)\\&
=\gamma\left(
-\displaystyle\sum_{j=1}^{K-1}
\displaystyle\frac{1}{2s (x_{K+1}-x_j)^{2s}}-\displaystyle\frac{1}{2s (x_{K+1}-x_K)^{2s}}-\displaystyle\sum_{j=K+2}^{N}
\displaystyle\frac{1}{2s (x_j-x_{K+1})^{2s}}\right.\\&
\left.-\displaystyle\sum_{j=1}^{K-1}
\displaystyle\frac{1}{2s (x_{K}-x_j)^{2s}}-\displaystyle\frac{1}{2s (x_{K+1}-x_K)^{2s}}-\displaystyle\sum_{j=K+2}^{N}
\displaystyle\frac{1}{2s (x_j-x_{K})^{2s}}\right.\\&
\left.+\sigma(t,x_{K+1})+\sigma(t,x_{K})\right)\\&
\leq \displaystyle\gamma\left(-\frac{1}{s \theta_K^{2s}}+2\|\sigma\|_{\infty}\right).
\end{split}
\eeqs

Therefore, $\theta_K$ is subsolution of 
\beq\label{sigmagenthetasol}\dot{\theta}=-\displaystyle\frac{\gamma}{s\theta^{2s}}+2\gamma\|\sigma\|_\infty,\eeq
with initial condition 
$$\theta_K(0)=\theta_K^0.$$
If $\sigma\equiv 0$, then $\dot{\theta}_K<0$. If $\sigma\not\equiv 0$ then  
equation \eqref{sigmagenthetasol} has the stationary solution 
$\theta_s(t):=\left(\displaystyle\frac{1}{2s\|\sigma\|_\infty}\right)^\frac{1}{2s}.$
 If assumption \eqref{coo1} is satisfied, then $\theta^0_K<\theta_s$ and since
 $\theta_K$ cannot touch $\theta_s$, its derivative remains negative. Hence 
$$\theta_K\leq \theta_K^0\quad\text{and}\quad \dot{\theta}_K<
-\displaystyle\frac{\gamma}{s(\theta_K^0)^{2s}}+2\gamma\|\sigma\|_\infty<0.$$ 
As a consequence, 
there exists a finite time $T_c$ such that $\theta_K(T_c)=0.$  Since $\theta_K$ is subsolution of \eqref{sigmagenthetasol} and it is decreasing, we have  
\beqs \begin{split}s\theta_K^{2s}\dot{\theta}_K\leq-\gamma+2s\gamma\|\sigma\|_\infty\theta_K^{2s}\leq -\gamma+2s\gamma\|\sigma\|_\infty(\theta_K^0)^{2s}.
\end{split}\eeqs
Integrating in $(0,T_c)$, we get 
\beqs\frac{s}{2s+1}(\theta_K^{2s+1}(T_c)-\theta_K^{2s+1}(0))=-\frac{s}{2s+1}(\theta_K^0)^{2s+1}\leq -\gamma(1-2s\|\sigma\|_\infty(\theta_K^0)^{2s})T_c
\eeqs
which gives \eqref{T^1_cbound}.
\end{proof}
While the particles $x_K$ and $x_{K+1}$ collide at time $T_c$, the remaining particles stay at positive distance one from each other, as stated in the lemma below.
\begin{lem}
There exists $c>0$ depending on $s,N$ the $\theta_i^0$'s and $T_c$, such that, for any $t\in[0,T_c]$ and $i\neq K$, we have
\beq\label{theta++boundabove}\theta_i(t)\geq c
.\eeq
\end{lem}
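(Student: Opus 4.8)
The plan is to establish the slightly stronger statement that for every pair $1\le p<q\le K$ the separation $D_{p,q}(t):=x_q(t)-x_p(t)$ stays bounded below by a positive constant on $[0,T_c)$; the analogous bound for $K+1\le p<q\le N$ follows from the reflection $x\mapsto -x$ (together with the relabelling $i\mapsto N+1-i$), which exchanges the two orientations, reverses the ordering, and turns the configuration into one of the same segregate type with $K$ replaced by $N-K$. Applying the bound with $q=p+1$ gives exactly \eqref{theta++boundabove} for every $i\neq K$, and it extends to $t=T_c$ by continuity.

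Fix a block $[p,q]$ with $p<q\le K$ and set $h_j(x):=\dfrac{x-x_j}{2s|x-x_j|^{1+2s}}$. From \eqref{dynamicalsysNeven} and \eqref{kposin-knegzeta},
\[
\dot D_{p,q}=\gamma\Big(h_p(x_q)-h_q(x_p)+\sum_{j\neq p,q}\zeta_j\big(h_j(x_q)-h_j(x_p)\big)+\sigma(t,x_p)-\sigma(t,x_q)\Big),
\]
and one computes $h_p(x_q)-h_q(x_p)=\dfrac{1}{s\,D_{p,q}^{2s}}$. The terms with $p<j<q$ are $\ge 0$ (these particles have the same orientation as $x_p,x_q$ and lie between them, so they push the endpoints apart), and so are the terms with $j>K$ (these are negatively oriented and lie to the right of $x_q$, hence attract $x_q$ more than $x_p$); we discard all of them. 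Since $x_j\notin[x_p,x_q]$ for $j\notin\{p,\dots,q\}$, the mean value theorem and the monotonicity of $h_j$ give $|h_j(x_q)-h_j(x_p)|\le D_{p,q}\,\mathrm{dist}\big(x_j,[x_p,x_q]\big)^{-1-2s}$, while $|\sigma(t,x_p)-\sigma(t,x_q)|\le M D_{p,q}$ by \eqref{sigmaassump}. Hence
\[
\dot D_{p,q}\ \ge\ \gamma\,D_{p,q}^{-2s}\Big(\tfrac1s-D_{p,q}^{1+2s}\Big(M+\sum_{j<p}\mathrm{dist}(x_j,[x_p,x_q])^{-1-2s}+\sum_{q<j\le K}\mathrm{dist}(x_j,[x_p,x_q])^{-1-2s}\Big)\Big),
\]
so $\dot D_{p,q}$ is controlled from below by the repulsive blow-up $\gamma(sD_{p,q}^{2s})^{-1}$ minus a correction that is small as soon as the \emph{positively oriented} particles lying outside the segment $[x_p,x_q]$ are at a definite distance from it.

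I then run a finite downward induction on the block size $q-p$, starting from $q-p=K-1$. For the maximal block $[1,K]$ there are no positively oriented particles outside $[x_1,x_K]$, so the correction is just $MD_{1,K}^{1+2s}$ and $\dot D_{1,K}>0$ whenever $D_{1,K}<(\tfrac1{sM})^{1/(1+2s)}$; therefore $\min\{D_{1,K}(t),(\tfrac1{sM})^{1/(1+2s)}\}$ is nondecreasing, which gives $D_{1,K}(t)\ge\min\{x_K^0-x_1^0,(\tfrac1{sM})^{1/(1+2s)}\}>0$ (and $D_{1,K}\ge x_K^0-x_1^0$ when $\sigma$ is $x$-independent). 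Assume now $D_{p',q'}\ge c$ on $[0,T_c)$ for all blocks of size strictly larger than $q-p$; in particular this applies to $[j,q]$ for $j<p$ and to $[p,j']$ for $q<j'\le K$. If $D_{p,q}(t)<c/2$ at some time, then for $j<p$ we get $\mathrm{dist}(x_j,[x_p,x_q])=x_p-x_j=D_{j,q}-D_{p,q}>c/2$, and likewise $\mathrm{dist}(x_j,[x_p,x_q])>c/2$ for $q<j\le K$; thus the correction coefficient is $\le M+N(c/2)^{-1-2s}$, so $\dot D_{p,q}>0$ once $D_{p,q}$ drops below a threshold $\tau\le c/2$ depending only on $s,N,M,c$, and the same barrier argument yields $D_{p,q}(t)\ge\min\{x_q^0-x_p^0,\tau\}>0$, closing the induction. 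The dependence of the final constant on $T_c$ enters only through working on the compact interval $[0,T_c]$; equivalently, running the last step as the Gronwall estimate $\dot D_{p,q}\ge-(M+N(c/2)^{-1-2s})D_{p,q}$ (valid while $D_{p,q}<c/2$) gives $D_{p,q}(t)\ge\min\{x_q^0-x_p^0,c/2\}\,e^{-(M+N(c/2)^{-1-2s})T_c}$. The only genuine obstacle is the ordering: a naive differential inequality for a single gap $\theta_i$ involves the distances to the neighbouring particles $x_{i-1},x_{i+2},\dots$ and is therefore circular; this is broken precisely by treating all same‑orientation blocks simultaneously, largest first, since for a small block the obstructing same‑orientation particles are automatically far away by the bounds already proved for the larger blocks.
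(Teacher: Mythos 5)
Your proof is correct, and it does reach the same conclusion via the same basic ingredients as the paper---namely the identity $\dot D_{p,q}=\gamma\bigl(h_p(x_q)-h_q(x_p)+\sum_{j\neq p,q}\zeta_j(h_j(x_q)-h_j(x_p))+\sigma(t,x_p)-\sigma(t,x_q)\bigr)$, the observation that intermediate positively oriented particles and all negatively oriented particles to the right contribute nonnegatively, and a differential inequality controlling the remaining same-orientation terms---but the global organization is genuinely different. The paper argues by contradiction: it first shows $\theta_{K,1}(t)\ge(x_K^0-x_1^0)e^{-\gamma\|\sigma_x\|_\infty t}$ by dropping the repulsion, then supposes there is a first time $T$ at which some pair in $\{1,\dots,K\}$ collides, selects the extremal colliding indices $i,j$ so that the neighbours $x_{i-1}$ and (if $j<K$) $x_{j+1}$ stay at a definite distance on the compact interval $[0,T]$, and reads off from the ODE that $\theta_{j,i}$ must be increasing just before $T$, a contradiction. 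The constant $C_0$ there is obtained by compactness and is not explicit. Your proof replaces the contradiction and compactness step by a downward induction on block size: the maximal block $[1,K]$ is handled first (and your barrier estimate, which keeps the repulsive $\gamma/(sD^{2s})$ term, actually yields a non-decaying constant rather than the paper's decaying Gronwall bound), and each smaller block inherits a quantitative separation from the larger blocks already treated, making the obstruction constant explicit in terms of $s,N,M,\theta_i^0$ (with the optional Gronwall variant giving $T_c$-dependence). What the paper's route buys is brevity---it needs the argument only for the single extremal colliding pair, not for all same-orientation pairs; what yours buys is a fully explicit and $T_c$-robust constant, and it sidesteps the mild bookkeeping in the paper's choice of ``minimal $i$, maximal $j$'' when several disjoint pairs collide simultaneously. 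The reduction of the $i>K$ case by the reflection $x\mapsto -x$, $i\mapsto N+1-i$ is also what the paper implicitly invokes (``a similar argument'').
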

\begin{proof}
Let us prove \eqref{theta++boundabove} for $i=1,\ldots, K-1$. Similarly one can show \eqref{theta++boundabove} for $i=K+1,\ldots,N-1$.
For $1\leq i<j\leq K$, let us denote
$$\theta_{j,i}(t):=x_j(t)-x_i(t).$$
We first show that 
\begin{equation}\label{thetaK1boundlem}\theta_{K,1}(t)\geq (x_K^0-x_1^0)e^{-\gamma\|\sigma_x\|_{\infty}t}.\eeq
Indeed, from \eqref{dynamicalsysNeven} and \eqref{kposin-knegzeta}, we have
\beqs\begin{split}
\dot{\theta}_{K,1}&=\gamma\left(
\displaystyle\sum_{l=1}^{K-1}
\displaystyle\frac{1}{2s (x_K-x_l)^{2s}} +\displaystyle\sum_{l=K+1}^{N}
\displaystyle\frac{1}{2s (x_l-x_K)^{2s}} +\displaystyle\sum_{l=2}^{K}
\displaystyle\frac{1}{2s (x_l-x_1)^{2s}}\right.\\&
\left.-\displaystyle\sum_{l=K+1}^{N}
\displaystyle\frac{1}{2s (x_l-x_1)^{2s}}-\sigma(t,x_K)+\sigma(t,x_1) \right)\\&
\geq \gamma\left(
\displaystyle\sum_{l=1}^{K-1}
\displaystyle\frac{1}{2s (x_K-x_l)^{2s}}+\displaystyle\sum_{l=2}^{K}
\displaystyle\frac{1}{2s (x_l-x_1)^{2s}}-\sigma(t,x_K)+\sigma(t,x_1) \right)\\&
\geq -\gamma \|\sigma_x\|_{\infty}\theta_{K,1},
\end{split}\eeqs
which implies \eqref{thetaK1boundlem}.

Now, suppose by contradiction that there exist $1\leq i<j\leq K$ and a first time $T>0$ such that 
\beq\label{thetajinulllem}\theta_{j,i}(T)=0.\eeq
From \eqref{thetaK1boundlem}, either $i>1$ or $j<K$. Suppose for instance $i>1$. Choose $i$ and $j$ to be respectively the minimum and the maximum index such that 
\eqref{thetajinulllem} holds, i.e., $x_{i}(T)-x_{i-1}(T)>0$ and either $x_{j+1}(T)-x_j(T)>0$ or $j=K$. Then, there exists $C_0>0$ such that for any $t\in[0,T]$, 
\beq\label{xi-xi-1wellsepalem1}-\frac{1}{2s(x_i(t)-x_{i-1}(t))^{2s}}\geq -C_0,\eeq and, if $j<K$, 
\beq\label{xi-xi-1wellsepalem2}-\frac{1}{2s(x_{j+1}(t)-x_{j}(t))^{2s}}\geq-C_0.
\eeq
Then, using \eqref{dynamicalsysNeven},  \eqref{kposin-knegzeta}, \eqref{xi-xi-1wellsepalem1} and \eqref{xi-xi-1wellsepalem2}, we get
\beqs\begin{split}
\dot{\theta}_{j,i}&=\gamma\left(\displaystyle\sum_{l=1}^{j-1}
\displaystyle\frac{1}{2s (x_j-x_l)^{2s}} -\displaystyle\sum_{l=j+1}^{K}
\displaystyle\frac{1}{2s (x_l-x_j)^{2s}}+\displaystyle\sum_{l=K+1}^{N}
+\displaystyle\frac{1}{2s (x_l-x_j)^{2s}}\right.\\&
-\displaystyle\sum_{l=1}^{i-1}
\displaystyle\frac{1}{2s (x_i-x_l)^{2s}} +\displaystyle\sum_{l=i+1}^{K}
\displaystyle\frac{1}{2s (x_l-x_i)^{2s}}-\displaystyle\sum_{l=K+1}^{N}
+\displaystyle\frac{1}{2s (x_l-x_i)^{2s}}\\&
\left.-\sigma(t,x_j)+\sigma(t,x_i)\right)\\&
\geq \gamma\left(\displaystyle\sum_{l=1}^{j-1}
\displaystyle\frac{1}{2s (x_j-x_l)^{2s}} -\displaystyle\sum_{l=j+1}^{K}
\displaystyle\frac{1}{2s (x_l-x_j)^{2s}}\right.\\&
-\displaystyle\sum_{l=1}^{i-1}
\displaystyle\frac{1}{2s (x_i-x_l)^{2s}} +\displaystyle\sum_{l=i+1}^{K}
\displaystyle\frac{1}{2s (x_l-x_i)^{2s}}\\&
\left.-\sigma(t,x_j)+\sigma(t,x_i)\right)\\&
\geq \gamma\left(\displaystyle\frac{1}{s \theta_{j,i}^{2s}} -\displaystyle\sum_{l=j+1}^{K}
\displaystyle\frac{1}{2s (x_l-x_j)^{2s}}-\displaystyle\sum_{l=1}^{i-1}
\displaystyle\frac{1}{2s (x_i-x_l)^{2s}}-\sigma(t,x_j)+\sigma(t,x_i)\right)\\&
\geq \gamma\left(\displaystyle\frac{1}{s \theta_{j,i}^{2s}}-C- \|\sigma_x\|_{\infty}\theta_{j,i}\right),
\end{split}\eeqs
where $C=(K-j+i-1)C_0$. 
Now, \eqref{thetajinulllem} implies that for any $\delta>0$ there exists $t_\delta>0$ such that $0<\theta_{j,i}(t)\leq\delta$ for any $t\in(T-t_\delta,T)$. Choosing $\delta$ small enough so that
$$ \displaystyle\frac{1}{s\delta^{2s}}-C- \|\sigma_x\|_{\infty}\delta>0,$$ from the computation above we see that $\theta_{j,i}$ is increasing in $ (T-t_\delta,T)$ and this contradicts  \eqref{thetajinulllem}.
Estimate \eqref{theta++boundabove} for $i<K$ is then proven. A similar argument gives  \eqref{theta++boundabove} when $i>K$. \end{proof}

Now, as  firstly seen in \cite{gonzalezmonneau,dpv, dfv,pv3}, we consider an auxiliary small parameter $\delta>0$ and define $(\xs_1(t),\ldots,\xs_N(t))$ to be the solution to the following system: for $i=1,\ldots,N$ 

\beq\label{dynamicalsysbarNeven}\begin{cases} \dot{\xs}_i=\gamma\left(
\displaystyle\sum_{j\neq i}\zeta_i\zeta_j 
\displaystyle\frac{\xs_i-\xs_j}{2s |\xs_i-\xs_j|^{1+2s}}-\zeta_i\sigma(t,\xs_i)-\zeta_i\delta\right)&\text{in }(0,T^\delta_c)\\
 \xs_i(0)=x_i^0-\zeta_i\delta,
\end{cases}\eeq
where the $\zeta_i$'s are given by \eqref{kposin-knegzeta} and $T^\delta_c$ is the collision time of the  perturbed system \eqref{dynamicalsysbarNeven}.
Let us denote for $i=1,\ldots,N-1$
\beq\label{thetabarNeven}\overline{\theta}_i(t):=\xs_{i+1}(t)-\xs_i(t).\eeq 
The following results have been proven in \cite{pv3} in the case $N=3$. Since the proofs do not change in the case $N>3$, we skip them and we refer to the analogous results in \cite{pv3}.
\begin{prop}\label{tclimpropNeven}
Let $(x_1,\ldots,x_N)$ and $(\xs_1,\ldots,\xs_N)$ be the solution respectively of system~\eqref{dynamicalsysNeven} and~\eqref{dynamicalsysbarNeven}. 
Let   $T_c<+\infty$ and $T_c^\delta$  be the collision time  respectively of~\eqref{dynamicalsysNeven} and~\eqref{dynamicalsysbarNeven}. Then we have 
\beq\label{TcdeltalimNeven}\lim_{\delta\to 0}T_c^\delta=T_c,\eeq and for $i=1,\ldots,N$
\beq\label{xideltalim}\lim _{\delta\to 0}\xs_i(t)=x_i(t)\quad\text{for any }t\in[0,T_c).\eeq
\end{prop}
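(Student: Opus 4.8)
The plan is to establish the convergence of the trajectories on compact subsets of $[0,T_c)$ first, and then deduce the convergence of the collision times as a byproduct of a quantitative lower bound on $\overline{\theta}_K^\delta$ near $T_c$. The two systems \eqref{dynamicalsysNeven} and \eqref{dynamicalsysbarNeven} have the same structure and differ only by (i) the initial datum, which is shifted by $\zeta_i\delta$, and (ii) the presence of the extra constant forcing term $-\zeta_i\delta$ on the right hand side. Since the vector field in \eqref{dynamicalsysNeven} is smooth (indeed real-analytic in the $x$'s and Lipschitz in $t$ by \eqref{sigmaassump}) on the open region $\{x_1<x_2<\dots<x_N\}$, for each fixed $T<T_c$ the solution $(x_1,\dots,x_N)$ stays in a compact subset $\mathcal K_T$ of that region on $[0,T]$, on which the vector field is uniformly Lipschitz. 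The right hand side of \eqref{dynamicalsysbarNeven} is a perturbation of that of \eqref{dynamicalsysNeven} of size $O(\delta)$ (uniformly on $\mathcal K_T$, shrinking $\mathcal K_T$ slightly), and the initial data differ by $O(\delta)$. Hence, as long as $\xs(t)$ remains in a fixed compact neighbourhood of $\mathcal K_T$, Gronwall's inequality gives
\beqs
|\xs_i(t)-x_i(t)|\le \big(C\delta+|\xs_i(0)-x_i^0|\big)e^{Lt}\le C'\delta,\qquad t\in[0,T],
\eeqs
with $L$ the Lipschitz constant and $C'$ depending on $T$ but not on $\delta$. A standard continuation argument shows that for $\delta$ small the trajectory $\xs(t)$ indeed cannot leave that neighbourhood before time $T$, so the bound is legitimate; letting $\delta\to0$ yields \eqref{xideltalim} on $[0,T]$, and since $T<T_c$ was arbitrary, on all of $[0,T_c)$.

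For \eqref{TcdeltalimNeven} the plan is a two-sided argument. For the $\limsup$: fix a small $a>0$; since $\overline{\theta}_K$ of the unperturbed system decreases to $0$ as $t\to T_c$ (this monotonicity follows from the computation in Lemma \ref{T^1_cboundlemma}, whose assumption \eqref{coo1} holds here in view of the footnote hypothesis $1-2s\theta_0^{2s}\|\sigma\|_\infty>0$), there is $T_c-a<T<T_c$ with $\theta_K(T)$ small; by the trajectory convergence just proved, $\overline{\theta}_K^\delta(T)\to\theta_K(T)$, so $\overline{\theta}_K^\delta(T)$ is small for $\delta$ small; and then the same differential inequality as in the proof of Lemma \ref{T^1_cboundlemma}, applied to $\overline\theta_K^\delta$ (which only gains, not loses, from the extra term $-2\gamma\delta$ on the right hand side, making it decrease even faster), forces $\overline\theta_K^\delta$ to vanish within a controlled further time, giving $T_c^\delta\le T_c+o(1)$. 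For the $\liminf$: one shows $\overline\theta_K^\delta$ cannot vanish much before $T_c$. This uses the \emph{strict} positivity of $\theta_K(t)$ on $[0,T_c)$ together with a lower differential inequality $\dot{\overline\theta}_K^\delta\ge -C(1+\|\sigma_x\|_\infty)$ valid as long as $\overline\theta_K^\delta$ stays bounded below (the repulsive/attractive terms and $\sigma$ are all bounded on the relevant compact set), so that on $[0,T_c-a]$ one has $\overline\theta_K^\delta(t)\ge \theta_K(t)-C'\delta\ge c_a-C'\delta>0$ for $\delta$ small, whence $T_c^\delta>T_c-a$. Since $a>0$ is arbitrary, $\liminf_{\delta\to0}T_c^\delta\ge T_c$, and combined with the previous bound we obtain \eqref{TcdeltalimNeven}.

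The main obstacle is the interplay between the two limits: the Gronwall estimate for \eqref{xideltalim} is only uniform on $[0,T]$ with $T<T_c$ strictly, because both the Lipschitz constant $L$ and the prefactor $C'$ blow up as $T\to T_c$ (the vector field becomes singular at collision), so one cannot naively pass to the limit up to $t=T_c$. The resolution — and the point that requires care — is precisely the decoupling above: prove trajectory convergence only on compact subintervals, and handle the behaviour near $T_c$ separately through the one-variable differential inequalities for $\overline\theta_K^\delta$, which are robust under the $O(\delta)$ perturbation because the extra forcing $-2\gamma\delta$ has the "right" sign for the upper bound and is harmless (being bounded) for the lower bound. One must also check that no \emph{other} pair of particles collides before the $(K,K+1)$ pair in the perturbed system, which follows by the same argument as in Lemma \ref{theta++boundabove} applied to \eqref{dynamicalsysbarNeven}, since that proof only used boundedness of $\sigma$, $\sigma_x$ and an a priori lower bound propagated by a differential inequality — all stable under the $\delta$-perturbation. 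As remarked in the text, the detailed computations are identical to the case $N=3$ treated in \cite{pv3}, so it suffices to indicate these adaptations.
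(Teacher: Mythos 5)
Your proof follows the natural perturbation-of-ODE route, and since the paper defers entirely to Proposition~5.1 of \cite{pv3} for this result (the argument there, for $N=3$, is stated to carry over verbatim), the plan you describe — Gronwall plus a continuation argument on each compact subinterval $[0,T]\subset[0,T_c)$ for~\eqref{xideltalim}, and then a separate one-variable differential inequality for $\overline\theta_K$ to control $T_c^\delta$ near $T_c$ — is correct and is essentially the intended one. One point needs correction though. The sign of the extra $\delta$-contribution in $\dot{\overline\theta}_K=\dot{\xs}_{K+1}-\dot{\xs}_K$ is the opposite of what you state: with $\zeta_K=1$ and $\zeta_{K+1}=-1$ the terms $-\gamma\zeta_{K+1}\delta$ and $+\gamma\zeta_K\delta$ add to $+2\gamma\delta$, and likewise $\overline\theta_K(0)=\theta_K^0+2\delta>\theta_K^0$. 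So the perturbation pushes the pair $(x_K,x_{K+1})$ \emph{apart} and tends to \emph{increase} the collision time (which is in fact consistent with $\vs_\ep$ being a supersolution). Your parenthetical claim that the $\delta$-term "makes it decrease even faster" is therefore wrong; the $\limsup$ step still works, but for a different reason: once $\overline\theta_K$ is small, the singular term $-\gamma/(s\overline\theta_K^{2s})$ dominates the bounded correction $+2\gamma\delta$ regardless of its sign, so $\overline\theta_K$ must reach zero within $O(\eta^{2s+1})$ of the chosen time $T$ where $\theta_K(T)=\eta$ is small. Two smaller remarks: in the $\liminf$ step the auxiliary lower differential inequality for $\overline\theta_K$ is superfluous, since the Gronwall/continuation step already keeps $\xs(t)$ in a neighbourhood of the unperturbed trajectory on $[0,T_c-a]$, on which \emph{all} gaps $\overline\theta_i$ (not only $\overline\theta_K$) are bounded below; and for the $\limsup$ step you do not need the monotonicity of $\theta_K$ from Lemma~\ref{T^1_cboundlemma} under hypothesis~\eqref{coo1} — continuity plus $\theta_K(T_c)=0$ (which follows from $T_c<+\infty$ together with~\eqref{theta++boundabove}) already lets you pick $T$ with $\theta_K(T)$ arbitrarily small.
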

\begin{proof}
See the proof of Proposition 5.1 in \cite{pv3}.
\end{proof}

\begin{prop}\label{holderthetapropNeven} Let  $(\xs_1,\ldots,\xs_N)$ be the solution to  system  \eqref{dynamicalsysbarNeven} and $(\overline{\theta}_1,\ldots,\overline{\theta}_{N-1})$ given by
\eqref{thetabarNeven}. Then, for any $0\leq\delta\leq 1$ the function $
\min_{i=1,\ldots,N}\overline{\theta}_i$  is H\"{o}lder continuous in $[0,T_c^\delta]$ with H\"{o}lder constant uniform in $\delta$.
\end{prop}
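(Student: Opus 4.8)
The plan is to reduce the claim to a one-sided differential inequality for the quantity $m_\delta(t):=\min_{i=1,\dots,N}\overline\theta_i(t)$ and then integrate. First I would observe that, since the $\xs_i$ are ordered and the minimal gap is the function whose H\"older regularity we seek, the only issue is near the collision time $T_c^\delta$, where $m_\delta\to 0$; away from that time the $\xs_i$ solve a smooth ODE system with bounded velocities (all denominators being bounded below), so $m_\delta$ is Lipschitz there with a constant controlled independently of $\delta$ by Proposition \ref{tclimpropNeven} and the uniform bound $T_c^\delta\to T_c<+\infty$. Thus it suffices to produce a uniform modulus of continuity on a fixed left-neighborhood of $T_c^\delta$.

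Next I would derive the key inequality. Fix $t$ and let $i$ be an index realizing the minimum, so $\overline\theta_i(t)=m_\delta(t)$. Exactly as in the computation in the proof of Lemma \ref{T^1_cboundlemma} (differentiating $\xs_{i+1}-\xs_i$ using \eqref{dynamicalsysbarNeven}), the two ``self-interaction'' terms coming from the pair $(\xs_i,\xs_{i+1})$ contribute $-\dfrac{\gamma}{s\,\overline\theta_i^{2s}}$, the interactions with the other particles are bounded in absolute value (their denominators $\xs_l-\xs_i$ for $l\ne i,i+1$ stay bounded below on the relevant interval, again by the convergence in Proposition \ref{tclimpropNeven} applied to the limiting ordered configuration), and the stress terms contribute at most $2\gamma\|\sigma\|_\infty$ in modulus; also $|\zeta_i\delta|\le 1$. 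Hence there is a constant $C_0>0$, independent of $\delta\in[0,1]$, such that whenever $m_\delta(t)$ is small,
\beq\label{HT78aux}
\dot m_\delta(t)\ge -\frac{\gamma}{s\,m_\delta(t)^{2s}}-C_0,
\eeq
in the sense of the lower Dini derivative of $m_\delta$ (the minimum of finitely many $C^1$ functions has a lower Dini derivative bounded below by the minimum of the derivatives of the active functions, which is the standard Danskin-type fact). Symmetrically, since each $\overline\theta_i$ is $C^1$ with $|\dot{\overline\theta}_i|\le C_1/m_\delta^{2s}+C_1$ near $T_c^\delta$, we also get the matching upper bound $\dot m_\delta(t)\le C_1 m_\delta(t)^{-2s}+C_1$.

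From here the conclusion is a one-variable ODE comparison. The function $g(r):=r^{2s+1}$ satisfies, along $m_\delta$, the bound $\frac{d}{dt}g(m_\delta)=(2s+1)m_\delta^{2s}\dot m_\delta$, which by the two-sided estimate on $\dot m_\delta$ lies in an interval of the form $[-(2s+1)(\gamma/s)-(2s+1)C_0\,m_\delta^{2s},\,(2s+1)C_1+(2s+1)C_1 m_\delta^{2s}]$; since $m_\delta$ is bounded on $[0,T_c^\delta]\subset[0,T_c+1]$, both endpoints are bounded by a constant independent of $\delta$. Therefore $t\mapsto m_\delta(t)^{2s+1}$ is Lipschitz on $[0,T_c^\delta]$ with a $\delta$-uniform constant, and consequently $m_\delta(t)=\big(m_\delta(t)^{2s+1}\big)^{1/(2s+1)}$ is H\"older continuous of exponent $\tfrac1{2s+1}$ on $[0,T_c^\delta]$, with H\"older constant uniform in $\delta$, using the elementary inequality $|a^{1/(2s+1)}-b^{1/(2s+1)}|\le |a-b|^{1/(2s+1)}$ for $a,b\ge0$.

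The main obstacle is the bookkeeping in \eqref{HT78aux}: one must check that the ``far'' pairwise interactions and, in the $\delta$-perturbed system, the extra $\zeta_i\delta$ drift, remain uniformly controlled up to the collision time, and that the set of indices achieving the minimum, although it may jump, does not spoil the Dini-derivative inequality. Both points are handled as in \cite{pv3}: the first by the gap lower bounds of the type \eqref{theta++boundabove} transported to the $\xs_i$ via Proposition \ref{tclimpropNeven} (only the colliding pair is allowed to degenerate), and the second by the standard minimum-of-$C^1$-functions calculus. Since none of this differs from the case $N=3$, I would simply invoke the corresponding statement and proof in \cite{pv3}, as the paper does.
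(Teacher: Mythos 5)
Your proposal is correct and reconstructs the argument that the paper itself delegates to Proposition 5.2 of \cite{pv3} (stated there for $N=3$): bound the Dini derivative of $m_\delta:=\min_i\overline\theta_i$ by $\pm(Cm_\delta^{-2s}+C)$, deduce that $m_\delta^{2s+1}$ is uniformly Lipschitz on $[0,T_c^\delta]$, and conclude that $m_\delta$ is H\"older with exponent $\tfrac{1}{2s+1}$, uniformly in $\delta$. One intermediate step is stated imprecisely: you claim the far interaction terms are bounded by a constant by invoking Proposition \ref{tclimpropNeven}, but that proposition only gives pointwise convergence $\xs_i\to x_i$ as $\delta\to0$ and does not by itself supply a gap lower bound uniform over $\delta\in[0,1]$; transferring \eqref{theta++boundabove} to the $\delta$-perturbed system requires rerunning the argument of the unnumbered lemma establishing \eqref{theta++boundabove} (which does work, the $\zeta_i\delta$-drift with $\delta\le1$ being just another bounded forcing, exactly as you note at the end). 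That refinement is in fact unnecessary for the conclusion: for any active index $i$ (with $\overline\theta_i(t)=m_\delta(t)$) every denominator $|\xs_j-\xs_l|$ with $j\in\{i,i+1\}$ and $l\notin\{i,i+1\}$ is trivially bounded below by $m_\delta(t)$, so the far terms are at most $CNm_\delta^{-2s}$; together with the self-interaction (which, as you correctly observe, equals $-\gamma/(s\overline\theta_i^{2s})$ only for the opposite-oriented pair $i=K$ and is favorable otherwise) this yields $|\dot m_\delta|\le Cm_\delta^{-2s}+C$ with $C$ depending only on $N$, $s$, $\gamma$, $\|\sigma\|_\infty$ and the initial configuration, not on $\delta\in[0,1]$, which is all the ODE comparison for $m_\delta^{2s+1}$ requires.
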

\begin{proof}
See the proof of Proposition 5.2 in \cite{pv3}.
\end{proof}

\noindent Next,  we set 
\beq\label{xbarpuntoNeven}\cs_i(t):= \dot{\xs}_i(t),\quad i=1,\ldots,N\eeq
and
\beq\label{barsigmaNevenbars}
\overline{\sigma}:=\displaystyle\frac{\sigma+\delta}{\beta},\eeq
where $\beta$ is given by \eqref{beta}.
Let $u$ and $\psi$ be  respectively the solution of \eqref{u} and \eqref{psi}. We define
\beq\label{vepansbarNeven}\begin{split}\vs_\ep(t,x)&:=\ep^{2s}\overline{\sigma}(t,x)+\sum_{i=1}^N u\left(\displaystyle\zeta_i\frac{x-\xs_i(t)}{\ep}\right)-(N-K)
-\sum_{i=1}^N\zeta_i\ep^{2s}\cs_i(t)\psi\left(\displaystyle\zeta_i\frac{x-\xs_i(t)}{\ep}\right).\end{split}
\eeq
The situation is depicted in Figure~5.
\bigskip

\begin{center}
  \includegraphics[width=1.01\textwidth]{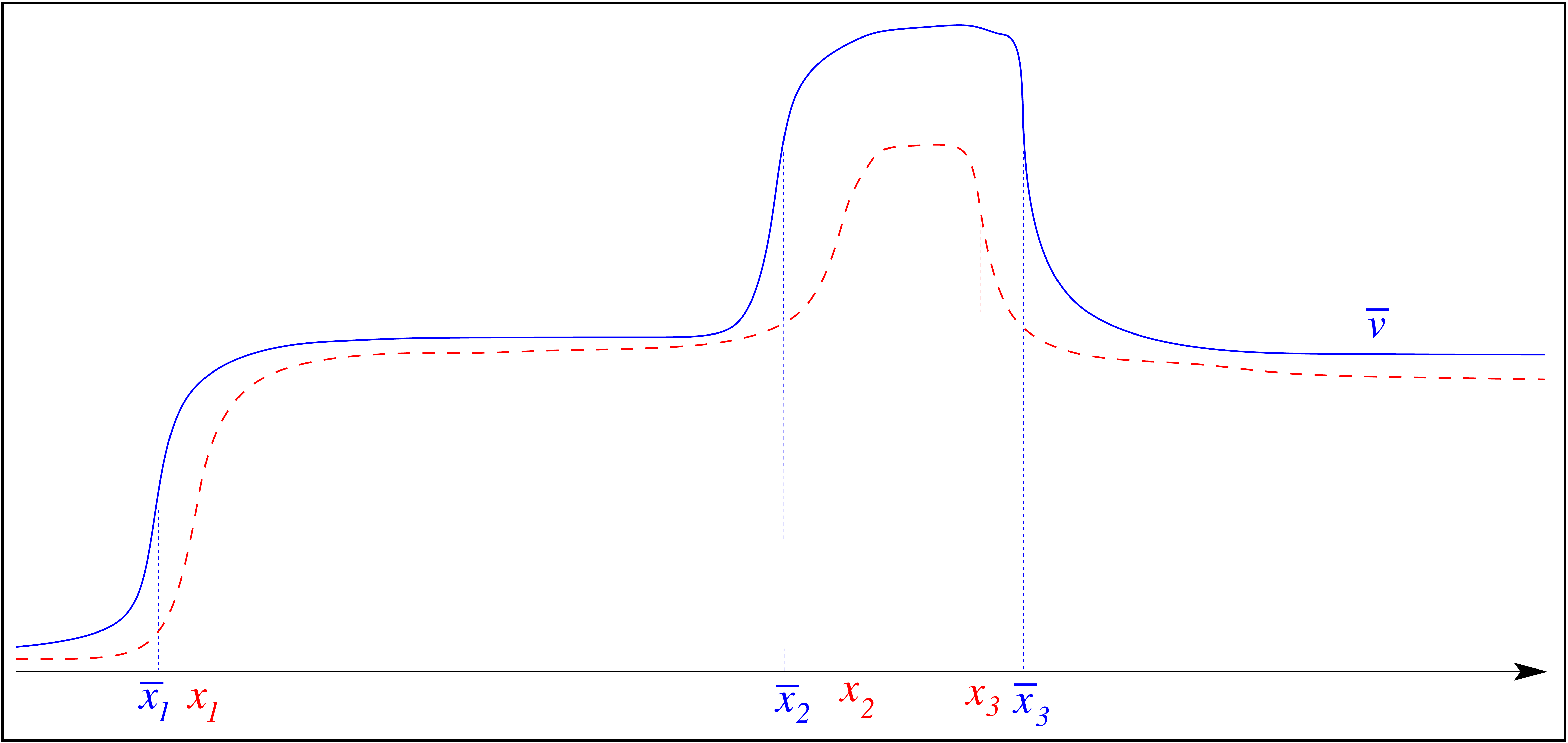}
{\footnotesize{
\nopagebreak$\,$\\
\nopagebreak
Figure 5: The barrier
of Proposition~\ref{thetaepropNeven}.}}
\end{center}

\bigskip
\bigskip

Under the appropriate choice of the parameters,
the function $\vs_\ep$ is a supersolution of \eqref{vepeq}-\eqref{vep03}, as next results point out:

\begin{prop}\label{thetaepropNeven}  There exist $\ep_0>0$ and $\theta_\ep,\,\delta_\ep>0$ with 
\beq\label{thetaepthetaprop} \theta_\ep,\,\delta_\ep,\,\ep\theta_\ep^{-2}=o(1)\quad\text{as }\ep\to0\eeq
such that for any $\ep<\ep_0$, if $(\xs_1,\ldots,\xs_N)$ is a solution of the 
ODE system in \eqref{dynamicalsysbarNeven} with $\delta\ge\delta_\ep$, then the  function $\vs_\ep$ defined in \eqref{vepansbarNeven} satisfies
$$\ep(\vs_\ep)_t-\I \vs_\ep+\displaystyle\frac{1}{\ep^{2s}}W'(\vs_\ep)-\sigma\geq 0$$
for any $x\in\R$ and  any $t\in(0,T_c^\delta)$ such that $\xs_{i+1}(t)-\xs_i(t)\geq \theta_\ep$ for $i=1,\ldots,N-1$.
\end{prop}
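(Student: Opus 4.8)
The plan is to verify the supersolution inequality
$\ep(\vs_\ep)_t-\I\vs_\ep+\frac{1}{\ep^{2s}}W'(\vs_\ep)-\sigma\ge0$
by a careful Taylor expansion around each transition layer, exploiting that the layers are well-separated (by $\theta_\ep$) so that the coupling between different layers is a lower-order tail term. First I would perform the change of variables centered at the $i$th particle: for $x$ in a neighborhood of $\xs_i(t)$ of size, say, $\sqrt{\ep}$, set $z:=\zeta_i(x-\xs_i(t))/\ep$ and write $\vs_\ep(t,x)=u(z)+(\text{corrections})$, where the corrections are the $\ep^{2s}\overline{\sigma}$ term, the $-\zeta_i\ep^{2s}\cs_i\psi$ term and the contributions of the other layers $u\big(\zeta_j(x-\xs_j)/\ep\big)$ for $j\ne i$ (which, by Lemma \ref{uinfinitylem} and the separation hypothesis $\theta_i\ge\theta_\ep$, are within $O(\ep^{2s}\theta_\ep^{-2s})$ of integer constants). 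I would then use the defining equations \eqref{u} for $u$ and \eqref{psi} for $\psi$ to cancel the main terms, the key algebraic point being that the choice of $\psi$ is exactly engineered (cf.\ Section 3.1 of \cite{gonzalezmonneau}) so that the $O(\ep^{2s})$ terms coming from $W'(u+\ep^{2s}\cdot)\approx W'(u)+\ep^{2s}W''(u)(\cdot)$, from $\ep(\vs_\ep)_t$ (whose leading part is $-\zeta_i\cs_i u'(z)$, i.e.\ $\gamma^{-1}$ times the ODE right-hand side after integrating against $u'$), and from the nonlocal interaction of the $i$th layer with the asymptotic values of the others, balance; the residual is the extra $\zeta_i\delta$ drift in \eqref{dynamicalsysbarNeven}, which is precisely what produces a favorable (nonnegative) sign once $\delta\ge\delta_\ep$.

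The next step is to handle the region far from all the particles, i.e.\ where $\min_i|x-\xs_i(t)|\ge\sqrt{\ep}$ (say). There $\vs_\ep$ is within $o(\ep^{2s})$ of an integer $m$ (because each $u(\zeta_i(x-\xs_i)/\ep)$ is within $O(\ep^{2s-1}\ep^{1/2})=o(\ep^{2s})$ — more carefully $O((\ep/\sqrt\ep)^{2s})=O(\ep^s)$, which still beats $\ep^{2s}/\ep^{2s}=1$ after multiplication by $\ep^{-2s}$ only if handled correctly, so I would instead keep the $\ep^{2s}\overline\sigma$ and $\psi$ corrections explicit), and one uses $W''(m)=W''(0)=\beta>0$ together with $W'(m)=0$ to see that $\frac{1}{\ep^{2s}}W'(\vs_\ep)$ produces the positive restoring term $\beta(\vs_\ep-m)/\ep^{2s}+o(\cdot)$ which dominates $\sigma$ precisely because $\vs_\ep-m=\ep^{2s}\overline\sigma-\sum_i\zeta_i\ep^{2s}\cs_i\psi+(\text{tails})$, so $\frac{\beta}{\ep^{2s}}(\vs_\ep-m)-\sigma=\delta-\beta\sum_i\zeta_i\cs_i\psi+O(\text{tails})$; again the $\delta$ term wins when $\delta_\ep$ is chosen appropriately (and here one needs the decay \eqref{psi'infty} of $\psi$ — actually one needs $\psi$ bounded, which is in the statement of its lemma — to control $\sum_i\zeta_i\cs_i\psi$, using also that the $\cs_i=\dot{\xs}_i$ are bounded, which follows from the particle separation and \eqref{dynamicalsysbarNeven}). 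In both regions I would absorb all genuinely lower-order errors (the $C^{3,\alpha}$-remainder in the Taylor expansion of $W'$, the $|y|^{-1-2s}$-tails of the fractional Laplacian acting across the gaps between layers, the Hölder modulus of $\sigma_x$) into a single quantity of size $O(\ep^{\min(\alpha-s,\,\text{something})}+\theta_\ep^{-2s})$, and define $\delta_\ep$ to be a fixed multiple of that, so that $\delta_\ep=o(1)$ as required, while simultaneously requiring $\ep\theta_\ep^{-2}=o(1)$ to control the terms where two derivatives of $u$ hit (these come from $\ep\p_t$ of $\psi(\zeta_i(x-\xs_i)/\ep)$ producing $\ep^{2s}\cs_i\zeta_i\dot{\xs}_i\psi'/\ep$-type contributions and from second-order Taylor terms).

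I should note this proposition is the exact analogue, for $N>3$ layers, of the supersolution constructions carried out in \cite{gonzalezmonneau,dpv,dfv} for a single layer and in \cite{pv3} for $N\le3$; the only genuinely new bookkeeping is that there are now $N$ mutual interaction tails instead of one or two, but each is still of size $O(\ep^{2s}\theta_\ep^{-2s})$ by \eqref{uinfinity}, and their sum is $O(N\ep^{2s}\theta_\ep^{-2s})$ with $N$ fixed, so nothing changes qualitatively. Concretely, the structure of the proof I would write: (i) fix the smooth cutoff into the $N$ near-layer regions and the far region; (ii) in region $i$, expand and use \eqref{u}, \eqref{psi}, \eqref{dynamicalsysbarNeven}, \eqref{eta}, \eqref{gamma} to reach an inequality of the form $\ge \gamma^{-1}\zeta_i\delta\cdot u'(z)+u'(z)\cdot o(1)-C(\ep\theta_\ep^{-2}+\theta_\ep^{-2s})$, then note $u'(z)\ge c/(1+|z|^{1+2s})$ from \eqref{u'infinity} bounds it below by a positive multiple of $\delta$ on the relevant range and handle the complementary range by the far-region estimate; (iii) in the far region, the restoring-force computation above; (iv) choose $\theta_\ep:=\ep^{\mu}$ for a suitable small $\mu>0$ (e.g.\ $\mu<1/2$ and $\mu<(\alpha-s)/(2s)$ so that both $\ep\theta_\ep^{-2}\to0$ and $\ep^{\alpha-s}\theta_\ep^{-2s}\to 0$), and $\delta_\ep:=C(\ep\theta_\ep^{-2}+\theta_\ep^{-2s}+\ep^{\alpha-s})$, verifying \eqref{thetaepthetaprop}. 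The main obstacle is step (ii): getting the cancellation of the $O(\ep^{2s})$ terms to be exact requires keeping track of the precise constants $\gamma,\beta,\eta$ and of the sign contributions of \emph{all} the other $N-1$ layers' asymptotic expansions \eqref{uinfinity} when fed into $\I$ at the $i$th layer — this is where the definition of $\psi$ via \eqref{psi} must be used in its sharp form, and where an error in a single sign $\zeta_i\zeta_j$ would break the argument.
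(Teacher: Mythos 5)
Your strategy is essentially the one used in the paper (the paper itself defers this proof to Proposition~5.3 of \cite{pv3}, but the Appendix of the present paper proves the closely analogous Proposition~\ref{thetaepropNevenlpart} along exactly these lines): split into a region within distance $\ep^\alpha$ of some layer and the complementary far region, use the equations \eqref{u} and \eqref{psi} together with \eqref{dynamicalsysbarNeven} to cancel the order-$\ep^{2s}$ terms near each layer, use $W'(m)=0$, $W''(0)=\beta>0$ far from the layers, bound the interaction tails and velocities by \eqref{uinfinity}, \eqref{u'infinity}, \eqref{psi'infty}, and let the $+\zeta_i\delta$ drift provide the favorable sign. So the plan is sound and matches the reference argument.

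One correction worth flagging: in your proposed definition $\delta_\ep:=C\bigl(\ep\theta_\ep^{-2}+\theta_\ep^{-2s}+\ep^{\alpha-s}\bigr)$ the middle term $\theta_\ep^{-2s}$ diverges as $\ep\to 0$, which would violate \eqref{thetaepthetaprop}; you clearly mean the layer-tail estimate $O\!\left((\ep/\theta_\ep)^{2s}\right)=O(\ep^{2s}\theta_\ep^{-2s})$, consistent with your earlier (correct) statement that the other layers contribute $O(\ep^{2s}\theta_\ep^{-2s})$ to $\vs_\ep$. The same slip appears in step~(ii) where you write the error as $-C(\ep\theta_\ep^{-2}+\theta_\ep^{-2s})$. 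Once the missing $\ep^{2s}$ factors are restored, the constraint $\ep\theta_\ep^{-2}=o(1)$ already controls the dangerous terms $\ep^{2s}\cs_i^2=O(\ep^{2s}\theta_\ep^{-4s})=O\bigl((\ep\theta_\ep^{-2})^{2s}\bigr)$ and $\ep^{2s+1}\dot\cs_i$ coming from the $\psi$ corrections, and $\delta_\ep$ can indeed be taken as a fixed multiple of the (now genuinely $o(1)$) error bound, so the rest of your argument goes through.
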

\begin{proof}
See the proof of Proposition 5.3 in \cite{pv3}.
\end{proof}

\begin{lem}\label{initialcondpropNeven}  Let $v_\ep^0(x)$ be defined by \eqref{vep03}. Then there exists $\ep_0>0$ such that for any $\ep<\ep_0$ and $\delta_\ep$ given by Proposition \ref{thetaepropNeven}, if $(\xs_1,\ldots,\xs_N)$ is the solution to system  \eqref{dynamicalsysbarNeven} with $\delta=\delta_\ep$, then  
the function $\vs_\ep$ defined in \eqref{vepansbarNeven} satisfies
$$v_\ep^0(x)\le \vs_\ep(0,x)\quad\text{for any }x\in\R.$$
\end{lem}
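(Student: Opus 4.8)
The plan is to compare the initial datum $v_\ep^0$ in \eqref{vep03} with the evaluation at $t=0$ of the barrier $\vs_\ep$ in \eqref{vepansbarNeven}, term by term, and absorb the discrepancies into the $\ep^{2s}$-order perturbations. First I would write down explicitly what $\vs_\ep(0,x)$ is: by \eqref{dynamicalsysbarNeven} the initial centers are $\xs_i(0)=x_i^0-\zeta_i\delta_\ep$, so
\begin{equation*}
\vs_\ep(0,x)=\ep^{2s}\,\frac{\sigma(0,x)+\delta_\ep}{\beta}+\sum_{i=1}^N u\!\left(\zeta_i\frac{x-x_i^0+\zeta_i\delta_\ep}{\ep}\right)-(N-K)-\ep^{2s}\sum_{i=1}^N\zeta_i\,\cs_i(0)\,\psi\!\left(\zeta_i\frac{x-x_i^0+\zeta_i\delta_\ep}{\ep}\right).
\end{equation*}
Subtracting \eqref{vep03}, the $-(N-K)$ cancels and the stress term contributes $\ep^{2s}\delta_\ep/\beta$; so it suffices to show that this positive gain dominates (i) the shift error $\sum_i[u(\zeta_i(x-x_i^0+\zeta_i\delta_\ep)/\ep)-u(\zeta_i(x-x_i^0)/\ep)]$ and (ii) the $\psi$-correction term, uniformly in $x$.

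For step (i), the key observation is the sign: since $u'>0$ by \eqref{u}, for each $i$ one has $u(\zeta_i(x-x_i^0+\zeta_i\delta_\ep)/\ep)-u(\zeta_i(x-x_i^0)/\ep)=u(\,\cdot\,+\delta_\ep/\ep)-u(\,\cdot\,)\ge 0$ (because $\zeta_i^2=1$, the inner shift is always $+\delta_\ep/\ep>0$ regardless of orientation). Thus each layer in the barrier is obtained by shifting in the ``favorable'' direction, and the shift error is nonnegative, so it poses no obstacle at all — it only helps. This is exactly why the initial centers in \eqref{dynamicalsysbarNeven} were perturbed by $-\zeta_i\delta$. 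Hence $v_\ep^0(x)-\vs_\ep(0,x)\le \ep^{2s}\sum_i\zeta_i\cs_i(0)\psi(\zeta_i(x-x_i^0+\zeta_i\delta_\ep)/\ep)-\ep^{2s}\delta_\ep/\beta$, and it remains to bound the $\psi$-sum.

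For step (ii), I would use that $\psi\in L^\infty(\R)$ (from the lemma following \eqref{psi}) together with a bound on the initial velocities $\cs_i(0)=\dot{\xs}_i(0)$. The latter is immediate from the ODE \eqref{dynamicalsysbarNeven} evaluated at $t=0$: since the initial gaps $\xs_{i+1}(0)-\xs_i(0)=\theta_i^0-(\zeta_{i+1}-\zeta_i)\delta_\ep$ are bounded below by a positive constant for $\delta_\ep$ small (the $\theta_i^0$ are fixed positive numbers), the right-hand side of \eqref{dynamicalsysbarNeven} is bounded by a constant $C_0$ depending only on $N$, $s$, the $\theta_i^0$ and $\|\sigma\|_\infty$, independent of $\ep$ and $\delta_\ep\le 1$. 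Therefore $\big|\ep^{2s}\sum_i\zeta_i\cs_i(0)\psi(\cdots)\big|\le \ep^{2s}\,N\,C_0\,\|\psi\|_{L^\infty}$, and since $\delta_\ep$ is a free parameter with $\delta_\ep=o(1)$ but decaying arbitrarily slowly, I would simply require in addition that $\delta_\ep\ge \beta N C_0\|\psi\|_{L^\infty}$ (still compatible with $\delta_\ep=o(1)$ as $\ep\to0$ — there is no lower-scale constraint on $\delta_\ep$ here, so this is harmless and does not interfere with \eqref{thetaepthetaprop}). With that choice $v_\ep^0(x)-\vs_\ep(0,x)\le 0$ for all $x$. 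The main (and really only) subtlety is bookkeeping: one must check that enlarging $\delta_\ep$ by a fixed constant factor is consistent with all the requirements of Proposition~\ref{thetaepropNeven} — which it is, since those require only $\delta\ge\delta_\ep$ and $\delta_\ep=o(1)$ — so no circularity arises.
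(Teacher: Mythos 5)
Your decomposition and the sign observation in step (i) are correct and are indeed the structural heart of the lemma: writing $\vs_\ep(0,x)-v_\ep^0(x)$ as a $\delta_\ep$-gain $\ep^{2s}\delta_\ep/\beta$, plus a nonnegative shift-gain in $u$ (nonnegative precisely because $\zeta_i^2=1$ makes every inner shift equal to $+\delta_\ep/\ep$, and $u'>0$), minus the $\psi$-correction. The uniform bound $|\cs_i(0)|\le C_0$ from the initial separation of the $x_i^0$'s is also fine.

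The final absorption step, however, contains a genuine error. You bound the $\psi$-term by $\ep^{2s}NC_0\|\psi\|_{L^\infty}$ and propose to absorb it into $\ep^{2s}\delta_\ep/\beta$ alone, which forces $\delta_\ep\ge\beta NC_0\|\psi\|_{L^\infty}$, a fixed positive constant. This flatly contradicts $\delta_\ep=o(1)$ in \eqref{thetaepthetaprop}: a quantity bounded below by a positive constant cannot tend to zero, so your parenthetical claim that this is ``still compatible with $\delta_\ep=o(1)$'' is false. Nor can one drop the requirement $\delta_\ep\to0$, since it underlies Proposition~\ref{tclimpropNeven} and hence the use of \eqref{TcdeltalimNeven}--\eqref{xideltalim} and \eqref{T1elim} in the rest of the proof of Theorem~\ref{mainthmbeforecollNeven}. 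The missing idea is that the $L^\infty$ bound on $\psi$ is too crude: one must use the decay $|\psi(y)|\le C(1+|y|)^{-2s}$ (which follows from \eqref{psi'infty} and $\psi(\pm\infty)=0$) together with the shift-gain in $u$ \emph{pointwise}, splitting according to the distance of $x$ from the centers (in the spirit of the Case 1 / Case 2 split used for Proposition~\ref{thetaepropNevenlpart} in the Appendix). Near a center, the fact that $\delta_\ep/\ep\to\infty$ makes the corresponding shift-gain $u(z_{i_0})-u(z_{i_0}-\delta_\ep/\ep)$ bounded below by a positive constant, which swamps the $O(\ep^{2s})$ $\psi$-loss; away from every center, $|\psi(z_i)|$ is itself $O(\ep^{2s(1-\alpha)})$ and the resulting $O(\ep^{2s(2-\alpha)})$ loss is absorbed by $\ep^{2s}\delta_\ep/\beta$ under a lower-scale condition of the type $\ep^{2s}=o(\delta_\ep)$ (cf. \eqref{thetaep3}), which, unlike your constant lower bound, is compatible with $\delta_\ep=o(1)$.
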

\begin{proof}
See the proof of Lemma 5.4 in \cite{pv3}.
\end{proof}

\noindent Now we consider the barrier function $\vs_\ep$ defined in \eqref{vepansbarNeven}, where  $(\xs_1,\ldots,\xs_N)$ is the solution to  system  \eqref{dynamicalsysbarNeven} in which we fix
 $\delta=\delta_\ep$, with $\delta_\ep$ given by Proposition \ref{thetaepropNeven}. 
 For $\ep$ small enough,  from \eqref{TcdeltalimNeven}, \eqref{xideltalim} and \eqref{theta++boundabove}, we infer that 
  there exists 
  $T^1_\ep>0$ such that 
  \beq\label{xsepk+1-xsepk=thetaepNeven}\overline{\theta}_K(T^1_\ep)= \xs_{K+1}(T^1_\ep)-\xs_K(T^1_\ep)=\theta_\ep,\eeq and
  $$\overline{\theta}_K(t)=\xs_{K+1}(t)-\xs_K(t)>\theta_\ep\quad\text{for any }t<T^1_\ep,$$ and there exists a constant $c_0>0$ independent of $\epsilon$ such that
  \beq\label{thetabarbounbelow} \xs_{i+1}(t)-\xs_i(t)\geq c_0\quad\text{for any }t\leq T^1_\ep\text{ and }i\neq K.\eeq
From \eqref{dynamicalsysbarNeven}, \eqref{xbarpuntoNeven} and \eqref{xsepk+1-xsepk=thetaepNeven}, we infer that 
 \beq\label{concmainhm33barbar}  |\cs_K(T^1_\ep)|\leq C\theta_\ep^{-2s}.\eeq
\noindent  By Proposition \ref{thetaepropNeven} and Lemma \ref{initialcondpropNeven},  the function~$\vs_\ep$ defined in \eqref{vepansbarNeven}, is a
supersolution of \eqref{vepeq}-\eqref{vep03} in $(0,T^1_\ep)\times\R$, and the comparison principle implies 
 \beq\label{veplessvepbarNeven} v_\ep(t,x)\le \vs_\ep(t,x)\quad\text{for any }(t,x)\in [0,T^1_\ep]\times\R.\eeq

\noindent  Moreover, since $\theta_\ep=o(1)$ as $\ep\to0$,  we have 
 \beq\label{T1elim}T^1_\ep=T_c^1+o(1)\quad\text{as }\ep\to 0.\eeq 
 Indeed, if up to subsequences, $T^1_\ep$ converges as $\ep\to 0$ to some $T>0$, since $T^1_\ep\leq T_c^{\delta_\ep}$ then by \eqref{TcdeltalimNeven} we have $T\leq T_c^1$. Suppose by contradiction that 
 \beq\label{tlesstccontr}T< T_c^1.\eeq Then by Proposition \ref{holderthetapropNeven} and \eqref{xsepk+1-xsepk=thetaepNeven}
 $$|\overline{\theta}_K(T^1_\ep)-\overline{\theta}_K(T)|=|\theta_\ep-\overline{\theta}_K(T)|\leq C|T^1_\ep-T|^\alpha,$$ for some $C>0$ and $\alpha\in(0,1)$ independent of $\epsilon$. This and 
 \eqref{xideltalim} imply that $\theta_K(T)=0$ which is in contradiction with \eqref{tlesstccontr}. Thus \eqref{T1elim} is proven.

 Next, to conclude the proof of Theorem \ref{mainthmbeforecollNeven}, we are going to show that starting from $T^1_\ep$, after a small time $t_\ep$, the function  $v_\ep$ satisfies
 \eqref{vlesepN-2}, 
for some $\varrho_\ep=o(1)$ and some  $x_i^\ep=x_i(T_c)+o(1)$, $i\neq K,\,K+1$,  as $\ep\to0$. 
For this scope, we denote
\beqs \xs_i^\ep:=\xs_i(T^1_\ep),\quad i=1,\ldots, N.\eeqs We recall that from \eqref{xsepk+1-xsepk=thetaepNeven}
\beq\label{xsepk+1-xsepk=thetaepNevensecond} \xs_{K+1}^\ep-\xs_K^\ep=\theta_\ep.\eeq
We show  \eqref{vlesepN-2} for $x\leq  \xs_K^\ep+\frac{\theta_\ep}{2}$, similarly one can prove  it for $x\geq  \xs_K^\ep+\frac{\theta_\ep}{2}$.
For this aim let us introduce   the following  further perturbed system: for $\dss>\delta_\ep$,  $L>1$ such that $\xs_{K+1}^\ep+L\theta_\ep<\xs_{K+2}^\ep$, and  $\zeta_i$'s  given by \eqref{kposin-knegzeta},

 \beq\label{dynamicalsysbarbarNeven}\begin{cases} \dot{\xss}_i=\gamma\left(
\displaystyle\sum_{j\neq i}\zeta_i\zeta_j 
\displaystyle\frac{\xss_i-\xss_j}{2s |\xss_i-\xss_j|^{1+2s}}-\zeta_i\sigma(t,\xss_i)-\zeta_i\dss\right)&\text{in }(0,T^{\dss}_c)\\
\xss_{K+1}(0)=\xs_{K+1}^\ep+L\theta_\ep\\
 \xss_i(0)=\xs_i^\ep-\zeta_i\theta_\ep&i\neq K+1,
\end{cases}\eeq
where $T^{\dss}_c$ is the collision time of the system \eqref{dynamicalsysbarbarNeven}.
We set 
\beq\label{xbarbarpuntoNeven}\css_i(t):= \dot{\xss}_i(t),\quad i=1,\ldots,N\eeq
and
\beq\label{barsigmaNeven}
\hat{\sigma}:=\displaystyle\frac{\sigma+\dss}{\beta},\eeq
where $\beta$ is given by \eqref{beta}.

\noindent We define
\beq\label{vepansbarbarNeven}\begin{split}\hat{v}_\ep(t,x)&:=\ep^{2s}\hat{\sigma}(t,x)+\sum_{i=1}^N u\left(\displaystyle\zeta_i\frac{x-\xss_i(t)}{\ep}\right)-(N-K)
-\sum_{i=1}^N\zeta_i\ep^{2s}\css_i(t)\psi\left(\displaystyle\zeta_i\frac{x-\xss_i(t)}{\ep}\right),\end{split}
\eeq
where again $u$ and $\psi$ are  respectively the solution of \eqref{u} and \eqref{psi}. 
\begin{lem}\label{vbarelessvtilteinitialtimelem} There exist $\ep_0,\,\dss_\ep>0$ with  $ \delta_\ep<\dss_\ep= \delta_\ep+o(1)$ as $\ep\to0$, where $\delta_\ep$ is given by Proposition \ref{thetaepropNeven}, such that  if  $(\xss_1,\ldots,\xss_N)$ is the solution to  system  \eqref{dynamicalsysbarbarNeven} with $\dss=\dss_\ep$, then 
the function $\hat{v}_\ep$ defined in \eqref{vepansbarbarNeven} satisfies
\beqs\hat{v}_\ep(0,x)\ge \vs_\ep(T_\ep^1,x)\quad\text{for any }x\in\R.\eeqs
\end{lem}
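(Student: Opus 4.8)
The plan is to fix $\dss=\dss_\ep$ of the form $\delta_\ep+\mu_\ep$, where $\delta_\ep$ is the quantity produced by Proposition~\ref{thetaepropNeven} and $\mu_\ep>0$ is a small auxiliary parameter with $\mu_\ep=o(1)$ whose admissible size will be dictated by the error analysis below, and then to compare $\hat v_\ep(0,\cdot)$ with $\vs_\ep(T^1_\ep,\cdot)$ profile by profile. Since $\mu_\ep>0$ this gives $\dss_\ep>\delta_\ep$ and $\dss_\ep=\delta_\ep+o(1)$; and since $\dss_\ep\geq\delta_\ep$ is exactly what Proposition~\ref{thetaepropNeven} requires of the new configuration, $\hat v_\ep$ will still be a supersolution in the relevant window (useful in the sequel, although only the ordering is asserted here). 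We fix once and for all a constant $L>1$ with $\xs_{K+1}^\ep+L\theta_\ep<\xs_{K+2}^\ep$, possible for $\ep$ small because $\xs_{K+2}^\ep-\xs_{K+1}^\ep\geq c_0$ by \eqref{thetabarbounbelow} while $\theta_\ep\to0$. Finally we record the velocity bounds: by \eqref{concmainhm33barbar} (and its obvious analogue for the index $K+1$), $|\cs_i(T^1_\ep)|\leq C$ for $i\notin\{K,K+1\}$ and $|\cs_i(T^1_\ep)|\leq C\theta_\ep^{-2s}$ for $i\in\{K,K+1\}$; and, reading \eqref{dynamicalsysbarbarNeven} at $t=0$, where the gap between the $K$-th and $(K+1)$-th particle equals $(L+2)\theta_\ep$ and all other gaps are $\geq c_0/2$, likewise $|\css_i(0)|\leq C$ for $i\notin\{K,K+1\}$ and $|\css_i(0)|\leq C\theta_\ep^{-2s}$ for $i\in\{K,K+1\}$.

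Inserting the initial data of \eqref{dynamicalsysbarbarNeven} into \eqref{vepansbarbarNeven} and comparing with \eqref{vepansbarNeven} at $t=T^1_\ep$, decompose $\hat v_\ep(0,x)-\vs_\ep(T^1_\ep,x)=A(x)+\frac{\ep^{2s}\mu_\ep}{\beta}+C(x)$, where $A(x):=\sum_{i=1}^N\left[u\!\left(\zeta_i\frac{x-\xss_i(0)}{\ep}\right)-u\!\left(\zeta_i\frac{x-\xs_i^\ep}{\ep}\right)\right]$ is the change of the layer profiles, $\frac{\ep^{2s}\mu_\ep}{\beta}$ comes from the stress corrections \eqref{barsigmaNeven}--\eqref{barsigmaNevenbars} (the common $\sigma$-part cancelling), and $C(x)$ collects the change of the $\psi$-corrections. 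The first, easy point is $A(x)\geq0$ for every $x$: each summand is nonnegative, because $u'>0$ and the new centres have all moved ``outward'' — for $i\leq K$, $\xss_i(0)=\xs_i^\ep-\theta_\ep\leq\xs_i^\ep$ (the positively oriented $i$-th layer shifts left); for $i\geq K+2$, $\xss_i(0)=\xs_i^\ep+\theta_\ep\geq\xs_i^\ep$ (the negatively oriented one shifts right); and $\xss_{K+1}(0)=\xs_{K+1}^\ep+L\theta_\ep\geq\xs_{K+1}^\ep$ likewise. Thus the inequality holds modulo $C(x)$, and the whole point is to absorb $C(x)$ into the gain in $A(x)$ together with the small positive margin $\frac{\ep^{2s}\mu_\ep}{\beta}$.

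To control $C(x)$, integrate \eqref{psi'infty} with $\psi(\pm\infty)=0$ to get $|\psi(z)|\leq C(1+|z|^{2s})^{-1}$; combined with the velocity bounds of the first paragraph, each $\psi$-correction term decays like an algebraic tail away from its centre, the ones of index $K$ or $K+1$ having amplitude $O\big((\ep/\theta_\ep)^{2s}\big)$ and the rest amplitude $O(\ep^{2s})$. One then splits $\R$ according to the position of $x$ with respect to the centres, distinguishing three regimes: $x$ within distance $O(\ep)$ of some centre; $x$ in the collision ``bump'' $(\xss_K(0),\xss_{K+1}(0))$; and $x$ elsewhere. In the first regime, the companion centre lies at distance $\geq\theta_\ep$ and $\theta_\ep/\ep\to\infty$ by \eqref{thetaepthetaprop}, so the corresponding summand of $A$ is bounded below by a positive constant, crushing $|C(x)|=o(1)$. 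In the bump, the $K$-th and $(K+1)$-th profiles have each been displaced by at least $\theta_\ep$, so by the tail estimate \eqref{uinfinity} their combined contribution to $A$ is $\geq c(\ep/\theta_\ep)^{2s}$, while the $\psi$-corrections of index $K,K+1$ there have arguments of size $\gtrsim\theta_\ep/\ep$, hence are $O\big((\ep/\theta_\ep)^{4s}\big)$, negligible against $c(\ep/\theta_\ep)^{2s}$, and the remaining $\psi$-corrections are $O(\ep^{2s})$. In the last regime both the layer gains and the errors are governed by the $-(1+2s)$ tails of $u'$ and $\psi'$; a direct comparison (again exploiting $\theta_\ep/\ep\to\infty$) shows that the summand of $A$ attached to a centre already dominates the $\psi$-error attached to that centre out to distance $\lesssim\theta_\ep^{1+2s}/\ep^{2s}$, and beyond that distance the residual $\psi$-error is $O\big(\ep^{2s}(\ep/\theta_\ep)^{2s}\big)$, hence is swallowed by $\frac{\ep^{2s}\mu_\ep}{\beta}$ as soon as $\mu_\ep\gtrsim(\ep/\theta_\ep)^{2s}$. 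This is the constraint that pins down $\mu_\ep$: any $\mu_\ep$ with $(\ep/\theta_\ep)^{2s}\lesssim\mu_\ep=o(1)$ works, and such a choice exists since $\ep/\theta_\ep\to0$ (from $\ep\theta_\ep^{-2}=o(1)$). Collecting the three regimes yields $A(x)+\frac{\ep^{2s}\mu_\ep}{\beta}+C(x)\geq0$ for all $x\in\R$, which is the assertion.

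The genuinely delicate step is the last regime above: one has to match the algebraic tails of $u'$ and $\psi'$ (Lemma~\ref{uinfinitylem}, estimates \eqref{uinfinity}, \eqref{u'infinity}, \eqref{psi'infty}) against one another and against the tiny margin $\ep^{2s}\mu_\ep$, and it is precisely here that the hypothesis $\theta_\ep^2\gg\ep$, i.e. $\ep\theta_\ep^{-2}=o(1)$ from \eqref{thetaepthetaprop}, is used essentially and the admissible size of $\mu_\ep=\dss_\ep-\delta_\ep$ is forced. Everything else — the non-negativity of $A$, the cancellation of the common stress term, and the estimates near the centres — follows routinely from the monotonicity of $u$ and the decay estimates cited above.
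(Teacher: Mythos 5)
The paper does not reproduce a proof of this lemma; it simply refers to the proof of Lemma~5.6 in~\cite{pv3}. I will therefore assess your argument on its own merits rather than against a displayed proof.

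Your overall strategy is the natural one and I believe it is correct in its main lines: split $\hat v_\ep(0,\cdot)-\vs_\ep(T^1_\ep,\cdot)$ into the layer gain $A$, the stress margin $\ep^{2s}(\dss_\ep-\delta_\ep)/\beta$ (note that for this cancellation one must read the system~\eqref{dynamicalsysbarbarNeven} and the definition~\eqref{barsigmaNeven} with the time origin shifted to $T^1_\ep$, i.e.\ $\sigma(T^1_\ep+t,\cdot)$, which is the convention needed anyway for the later comparison principle), and the change $C$ in the $\psi$-corrections; observe that, by the monotonicity of $u$ and the sign of the shifts $-\zeta_i\theta_\ep$ (resp.\ $+L\theta_\ep$ for $i=K+1$), every summand of $A$ is nonnegative; and then absorb $C$ using $A$ near the centres and in the collision gap, and the margin far away.

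There is, however, a quantitative slip in your far-regime analysis, which affects the admissible size of $\mu_\ep=\dss_\ep-\delta_\ep$ though not the truth of the lemma. The dominant $\psi$-errors are those of index $K$ and $K+1$, of amplitude $O\big((\ep/\theta_\ep)^{2s}\big)$ and tail $(\ep/d)^{2s}$, so at distance $d$ from the centre they are $O\big(\ep^{4s}\theta_\ep^{-2s}d^{-2s}\big)$. The corresponding layer gain at distance $d$ is $O\big(\theta_\ep\ep^{2s}d^{-(1+2s)}\big)$, and the crossover is indeed at $d_\ast=\theta_\ep^{1+2s}\ep^{-2s}$; but at $d=d_\ast$ the $\psi$-error is $O\big((\ep/\theta_\ep)^{4s(1+s)}\big)$, \emph{not} $O\big(\ep^{2s}(\ep/\theta_\ep)^{2s}\big)$ as you write, and these two quantities are comparable only when $\theta_\ep\gtrsim\ep^{2s/(1+2s)}$, which for $s<\tfrac12$ is \emph{not} implied by $\ep\theta_\ep^{-2}=o(1)$. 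Consequently the specific threshold ``$\mu_\ep\gtrsim(\ep/\theta_\ep)^{2s}$'' you state need not suffice for $s<\tfrac12$; the correct requirement coming out of your own computation is $\mu_\ep\gtrsim\max\big\{(\ep/\theta_\ep)^{2s},\,\ep^{-2s}(\ep/\theta_\ep)^{4s(1+s)}\big\}$. The good news is that this larger threshold is still $o(1)$: since $\theta_\ep\gg\ep^{1/2}$ one has $\ep^{-2s}(\ep/\theta_\ep)^{4s(1+s)}\leq\ep^{-2s}\cdot\ep^{2s(1+s)}=\ep^{2s^2}=o(1)$, so an admissible $\mu_\ep=o(1)$ does exist and the lemma holds. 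You should, however, state the correct bound rather than $\ep^{2s}(\ep/\theta_\ep)^{2s}$, and acknowledge that for small $s$ the constraint on $\mu_\ep$ comes from this tail matching rather than from the coarser $(\ep/\theta_\ep)^{2s}$ amplitude alone.

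One last remark: your decomposition implicitly treats $\hat\sigma(0,x)-\overline\sigma(T^1_\ep,x)$ as $\mu_\ep/\beta$, i.e.\ assumes the $\sigma$-parts cancel. As written in the paper, $\hat\sigma(0,x)$ contains $\sigma(0,x)$ while $\overline\sigma(T^1_\ep,x)$ contains $\sigma(T^1_\ep,x)$, and these differ by up to $M\,T^1_\ep$ by~\eqref{sigmaassump}, which is $O(1)$, not small. The fix is the standard one (and surely intended in the paper): the auxiliary system~\eqref{dynamicalsysbarbarNeven} and the barrier $\hat v_\ep$ must be built from the time-shifted stress $\sigma(T^1_\ep+\cdot,\cdot)$, since otherwise $\hat v_\ep$ would not supersolve the equation satisfied by $v_\ep(T^1_\ep+\cdot,\cdot)$. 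With that convention, which you should make explicit, the $\sigma$-parts do cancel and your decomposition is correct.
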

\begin{proof}
See the proof of Lemma 5.6 in \cite{pv3}.
\end{proof}

\begin{lem}\label{teplem}Let
\beq\label{tep} t_\ep:=\frac{4s(L+2)^{2s}\theta_\ep^{2s+1}}{\gamma[1-2s(L+2)^{2s}\theta_\ep^{2s}(\|\sigma\|_\infty+\dss)]}.\eeq
Then there exists $L>1$, $c_1>0$, and $\ep_0, \dss_0>0$ such that for any $\ep<\ep_0$  and $\dss<\dss_0$, 
\beq\label{xsk+l<xsk+1}\xs_{K+1}^\ep+L\theta_\ep<\xs_{K+2}^\ep,\eeq
and 
the solution $(\xss_1,\ldots,\xss_N)$  to  system  \eqref{dynamicalsysbarbarNeven} satisfies
\beq\label{xss1graterxs2lem} \xss_K(t_\ep)\geq \xs_{K+1}^\ep, \eeq for any $t\in[0,t_\ep]$, $\xss_{K+1}(t)-\xss_K(t)$ is decreasing and 
\beq\label{xss2-xss1tep} \xss_{K+1}(t)-\xss_K(t)\geq \xss_{K+1}(t_\ep)-\xss_K(t_\ep)\geq \theta_\ep,\eeq
and for any $t\in[0,t_\ep]$ and $i\neq K$
\beq\label{xssI=1-xssitep} \xss_{i+1}(t)-\xss_i(t) \geq c_1.\eeq
\end{lem}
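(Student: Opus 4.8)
The plan is to analyze the evolution of the gaps $\xss_{i+1}-\xss_i$ for the further perturbed system \eqref{dynamicalsysbarbarNeven}, isolating the gap $\theta_{K,K+1}:=\xss_{K+1}-\xss_K$ from the others exactly as in the proof of Lemma \ref{T^1_cboundlemma} and of the gap estimate \eqref{theta++boundabove}. First I would record the crucial geometry at the initial time: by construction $\xss_{K+1}(0)-\xss_K(0)=(\xs_{K+1}^\ep+L\theta_\ep)-(\xs_K^\ep-\theta_\ep)=(L+2)\theta_\ep$ (using \eqref{xsepk+1-xsepk=thetaepNevensecond}), while the neighbouring gaps start at distance comparable to the $c_0$ in \eqref{thetabarbounbelow} up to $O(\theta_\ep)$ corrections; in particular for $\ep$ small the starting configuration has all gaps $\ge c_0/2$ except possibly the $K$th one, which lies in $[\theta_\ep,(L+2)\theta_\ep]$. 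The constant $L$ will be chosen first, depending only on $s,N$, the $\theta_i^0$'s and $T_c$: since $\xs_{K+2}^\ep-\xs_{K+1}^\ep\ge c_0/2$ by \eqref{thetabarbounbelow} (after absorbing the $O(\theta_\ep)$ shift), and $\theta_\ep=o(1)$, the inequality $\xs_{K+1}^\ep+L\theta_\ep<\xs_{K+2}^\ep$ holds for all $\ep$ small, which gives \eqref{xsk+l<xsk+1}.

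Next I would estimate $\dot\theta_{K,K+1}$. Exactly as in the proof of Lemma \ref{T^1_cboundlemma}, writing out \eqref{dynamicalsysbarbarNeven} and \eqref{kposin-knegzeta} and throwing away the favourable (positive) repulsive terms from particles $1,\dots,K-1$ and $K+2,\dots,N$, one gets
\[
\dot\theta_{K,K+1}\le \gamma\left(-\frac{1}{s\,\theta_{K,K+1}^{2s}}+2(\|\sigma\|_\infty+\dss)\right),
\]
so $\theta_{K,K+1}$ is a subsolution of \eqref{sigmagenthetasol} with $\|\sigma\|_\infty$ replaced by $\|\sigma\|_\infty+\dss$; since it starts below the stationary level $(2s(\|\sigma\|_\infty+\dss))^{-1/2s}$ once $(L+2)\theta_\ep$ is small, it is decreasing, which is the monotonicity claim in \eqref{xss2-xss1tep}. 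Integrating $s\theta_{K,K+1}^{2s}\dot\theta_{K,K+1}\le -\gamma+2s\gamma(\|\sigma\|_\infty+\dss)(L+2)^{2s}\theta_\ep^{2s}$ on $[0,t]$, starting from $\theta_{K,K+1}(0)=(L+2)\theta_\ep$, the time $t_\ep$ in \eqref{tep} is precisely the (upper bound for the) time at which the right side of the integrated inequality forces $\theta_{K,K+1}^{2s+1}$ down from $((L+2)\theta_\ep)^{2s+1}$ to $\theta_\ep^{2s+1}$; hence $\theta_{K,K+1}(t_\ep)\ge\theta_\ep$ cannot be improved past $t_\ep$, which gives \eqref{xss2-xss1tep}. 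The bound \eqref{xss1graterxs2lem}, namely $\xss_K(t_\ep)\ge\xs_{K+1}^\ep$, follows because $\xss_{K+1}$ only moves to the left by at most $(L+2)\theta_\ep-\theta_\ep=(L+1)\theta_\ep$ over $[0,t_\ep]$ (its derivative has size $O(\theta_\ep^{-2s})$ and $t_\ep=O(\theta_\ep^{2s+1})$, so the displacement of \emph{each} of $\xss_K,\xss_{K+1}$ is $O(\theta_\ep)$), and $\xss_{K+1}(0)=\xs_{K+1}^\ep+L\theta_\ep$, $\xss_K(0)=\xs_K^\ep-\theta_\ep=\xs_{K+1}^\ep-(\theta_\ep+\theta_\ep)$; tracking the signs and using that $\theta_\ep=o(1)$ while the separation $\xs_{K+1}^\ep-\xss_K(0)$ is only $O(\theta_\ep)$, one checks $\xss_K(t)\ge\xs_{K+1}^\ep$ for $t\in[0,t_\ep]$ after the right choice of $L$ (this is the one spot where $L$ large is used, to make the rightward push on $\xss_K$ from the now-merged pair dominate).

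Finally, \eqref{xssI=1-xssitep} for $i\ne K$ is proved by the same contradiction argument as \eqref{theta++boundabove}: if some neighbouring gap $\theta_{j,i}$ (with $i<j$ both $\le K$, or both $\ge K+1$) were to vanish first at a time $T\le t_\ep$, then choosing $i,j$ extremal one bounds the two ``boundary'' repulsive terms by a constant $C_0$ on $[0,T]$, discards the far terms, keeps the favourable $\frac{1}{s\theta_{j,i}^{2s}}$ term, and concludes $\dot\theta_{j,i}\ge\gamma(\frac{1}{s\theta_{j,i}^{2s}}-C-\|\sigma_x\|_\infty\theta_{j,i})$, which is positive once $\theta_{j,i}$ is small enough, contradicting that $\theta_{j,i}$ reaches $0$; the base case is handled as in \eqref{thetaK1boundlem} by the Gronwall bound $\theta_{j,i}(t)\ge\theta_{j,i}(0)e^{-\gamma\|\sigma_x\|_\infty t}$ for the extreme gap, and $\theta_{j,i}(0)\ge c_0/2-O(\theta_\ep)$ by \eqref{thetabarbounbelow}. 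I expect the only real subtlety to be keeping the quantitative bookkeeping on $L$ and $\dss_0$ consistent with \eqref{thetaepthetaprop} — i.e. verifying that a single choice of $L$ (depending only on $s,N,\theta_i^0,T_c$) simultaneously makes $\xss_K$ cross $\xs_{K+1}^\ep$ and keeps $\xs_{K+1}^\ep+L\theta_\ep<\xs_{K+2}^\ep$ — but since $\theta_\ep=o(1)$ all the $\theta_\ep$-order terms are negligible against $c_0$, so no circularity arises and the constants can be fixed in the order $L$, then $\dss_0$, then $\ep_0$.
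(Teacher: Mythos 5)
Your outline reproduces the right setup (initial gap $(L+2)\theta_\ep$, monotone decrease of $\hat\theta_K:=\xss_{K+1}-\xss_K$, the other gaps staying of order $c_0$, and the separation of constants), but there are two genuine gaps at the heart of the argument, precisely at the step where $L$ does its work.

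First, you misidentify the meaning of $t_\ep$. You claim that integrating the \emph{upper} bound $s\hat\theta_K^{2s}\dot{\hat\theta}_K\le-\gamma+2s\gamma(\|\sigma\|_\infty+\dss)(L+2)^{2s}\theta_\ep^{2s}$ yields $t_\ep$ as the time for $\hat\theta_K^{2s+1}$ to drop from $((L+2)\theta_\ep)^{2s+1}$ to $\theta_\ep^{2s+1}$, and that this gives $\hat\theta_K(t_\ep)\ge\theta_\ep$. This is wrong on two counts. An upper bound on $\dot{\hat\theta}_K$ gives an \emph{upper} bound on $\hat\theta_K$, hence only an upper bound on the time $\tau$ at which $\hat\theta_K$ first reaches $\theta_\ep$; it cannot give $\hat\theta_K(t_\ep)\ge\theta_\ep$. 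And in fact $t_\ep$ in \eqref{tep} is not that shrinking time at all: it is an upper bound for the time $t$ at which $\xss_K$, moving \emph{rightward}, travels the distance $2\theta_\ep$ from $\xss_K(0)=\xs_{K+1}^\ep-2\theta_\ep$ to $\xs_{K+1}^\ep$. One gets it by establishing the strictly positive lower bound
$$\dot{\xss}_K\ge\gamma\left(\frac{1}{2s(L+2)^{2s}\theta_\ep^{2s}}-\|\sigma\|_\infty-\dss\right)>0$$
from \eqref{dynamicalsysbarbarNeven} and $\hat\theta_K(t)<(L+2)\theta_\ep$, and then integrating. Note $t_\ep=O((L+2)^{2s}\theta_\ep^{2s+1})$. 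Your sketch of \eqref{xss1graterxs2lem} (``displacement is $O(\theta_\ep)$'') misses the sign: without the strict lower bound on $\dot{\xss}_K$ you cannot exclude that $\xss_K$ drifts left, so \eqref{xss1graterxs2lem} would fail.

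Second, and more importantly, the step proving \eqref{xss2-xss1tep} requires a \emph{lower} bound on $\dot{\hat\theta}_K$, which is entirely absent from your outline. Because the remaining gaps stay $\ge c_1$ on the relevant time interval, one bounds the contributions of the other particles by a constant $C$ and obtains
$$\dot{\hat\theta}_K\ge\gamma\left(-\frac{1}{s\hat\theta_K^{2s}}-C-2\|\sigma\|_\infty\right),$$
and then $\hat\theta_K\le(L+2)\theta_\ep$ to get $\hat\theta_K^{2s}\dot{\hat\theta}_K\ge-\frac{\gamma}{s}(1+(2s\|\sigma\|_\infty+C)(L+2)^{2s}\theta_\ep^{2s})$. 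Integrating this gives the lower bound
$$\tau\ge\frac{s\theta_\ep^{2s+1}[(L+2)^{2s+1}-1]}{\gamma(2s+1)(1+(2s\|\sigma\|_\infty+C)(L+2)^{2s}\theta_\ep^{2s})},$$
and it is this lower bound that grows like $(L+2)^{2s+1}$ in $L$ while $t_\ep$ grows only like $(L+2)^{2s}$. That is the actual role of choosing $L$ large: it makes $\tau>t_\ep$, so that $\hat\theta_K$ is still decreasing (and $\ge\theta_\ep$) throughout $[0,t_\ep]$, and simultaneously $\xss_K$ has already crossed $\xs_{K+1}^\ep$ by time $t_\ep$. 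Your heuristic that $L$ large is there ``to make the rightward push on $\xss_K$ from the merged pair dominate'' does not capture this competition between the two time scales; as written, your argument does not deliver \eqref{xss2-xss1tep} at all, and delivers \eqref{xss1graterxs2lem} only under an implicit positivity assumption that needs the explicit velocity bound above.
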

\begin{proof} Let us denote
\beqs \hat{\theta}_K(t):=\xss_{K+1}(t)-\xss_K(t).\eeqs 
Then, from  Lemma \ref{T^1_cboundlemma}, for $\ep$ and $\dss$ small enough, 
 such that 
\beqs\thh_K(0)=(L+2)\theta_\ep<\left[\frac{1}{2s(\|\sigma\|_\infty+\dss)}\right]^\frac{1}{2s},\eeqs
$\thh_K$ is decreasing, therefore for $t>0$,
\beq\label{theta1decreqsecond2}\thh_K(t)<(L+2)\theta_\ep.\eeq
Moreover, 
 there exists $\tau>0$ satisfying
\beq\label{uppertau}\tau<\frac{s(L+2)^{1+2s}\theta_\ep^{1+2s}}{(2s+1)\gamma(1-2s(\theta_\ep)^{2s}(L+2)^{2s}(\|\sigma\|_\infty+\dss))},\eeq
such that $\hat{\theta}_K(\tau)=\theta_\ep$. 
Remark that $\tau=o(1)$ as $\ep\to0$, then 
from     \eqref{thetabarbounbelow} we infer that, for $\epsilon$  and $\dss$ small enough, there exists a constant $c_1$ independent of $\epsilon$ and $\dss$ such that 
\beq\label{thhineqkteplem} \thh_i(t)\geq c_1\quad\text{for any }t\in[0,\tau]\text{ and }i\neq K.\eeq 
From \eqref{thhineqkteplem} (where the $\zeta_i$'s are given by \eqref{kposin-knegzeta}) and  \eqref{dynamicalsysbarbarNeven}, we infer that 
\beqs\begin{split}
\dot{\thh}_K&=\gamma\left(-\displaystyle\sum_{j=1}^{K-1}
\displaystyle\frac{1}{2s (\xss_{K+1}-\xss_j)^{2s}}-\displaystyle\frac{1}{2s (\xss_{K+1}-\xss_K)^{2s}}-\displaystyle\sum_{j=K+2}^{N}
\displaystyle\frac{1}{2s (\xss_j-\xss_{K+1})^{2s}}\right.\\&
\left.-\displaystyle\sum_{j=1}^{K-1}
\displaystyle\frac{1}{2s (\xss_{K}-\xss_j)^{2s}}-\displaystyle\frac{1}{2s (\xss_{K+1}-\xss_K)^{2s}}-\displaystyle\sum_{j=K+2}^{N}
\displaystyle\frac{1}{2s (\xss_j-\xss_{K})^{2s}}\right.\\&
\left.+\sigma(t,\xss_{K+1})+\sigma(t,\xss_{K})+2\dss\right)\\&
\geq \gamma\left(-\frac{1}{s \thh_K^{2s}}-\displaystyle\sum_{j=1}^{K-1}
\displaystyle\frac{1}{2s (\thh_K+\ldots+\thh_j)^{2s}}-\displaystyle\sum_{j=K+2}^{N}
\displaystyle\frac{1}{2s (\thh_{K+1}+\ldots+\thh_{j-1})^{2s}}\right.\\&
\left.-\displaystyle\sum_{j=1}^{K-1}
\displaystyle\frac{1}{2s (\thh_{K-1}+\ldots+\thh_{j})^{2s}}-\displaystyle\sum_{j=K+2}^{N}
\displaystyle\frac{1}{2s (\thh_K+\ldots+\thh_{j-1})^{2s}}-2\|\sigma\|_{L^\infty}\right)\\&
\geq \gamma\left(-\frac{1}{s \thh_K^{2s}}-C-2\|\sigma\|_{L^\infty}\right),
\end{split}
\eeqs
for some $C>0$ independent of $\ep$ and $\dss$.

Combining the previous estimate with \eqref{theta1decreqsecond2}, we get, for any $t\in(0,\tau)$,
$$\dot{\hat{\theta}}_K
\geq\gamma\left(\displaystyle\frac{-1-(2\|\sigma\|_\infty +C) s\hat{\theta}_K^{2s}}{s \hat{\theta}_K^{2s}}\right)
\geq\gamma\left(\displaystyle\frac{-1-(2\|\sigma\|_\infty+C) s(L+2)^{2s}\theta_\ep^{2s}}{s \hat{\theta}_K^{2s}}\right),$$
i.e.,
$$\hat{\theta}_K^{2s}\dot{\hat{\theta}}_K\geq \frac{\gamma}{s}\left(-1-(2s\|\sigma\|_\infty +C)(L+2)^{2s}\theta_\ep^{2s}\right).$$
Integrating the previous inequality in $(0,\tau)$, we get
\beqs\begin{split}\frac{1}{2s+1}(\thh^{2s+1}(\tau)-\thh^{2s+1}(0))&=\frac{1}{2s+1}\theta_\ep^{2s+1}(1-(L+2)^{2s+1})\\&
\geq \frac{\gamma}{s}\left(-1-(2s\|\sigma\|_\infty+C) (L+2)^{2s}\theta_\ep^{2s}\right)\tau,
\end{split}
\eeqs from which 
\beq\label{tauupper}\tau\geq \frac{s\theta_\ep^{2s+1}[(L+2)^{2s+1}-1]}{\gamma(2s+1)(1+(2s\|\sigma\|_\infty +C)(L+2)^{2s}\theta_\ep^{2s})}.\eeq

Next,  \eqref{dynamicalsysbarbarNeven} and \eqref{theta1decreqsecond2} imply 
\beq\label{xss1inelemep2}\begin{split}  \dot{\xss}_K=&\gamma\left(\displaystyle\sum_{j=1}^{K-1} 
\displaystyle\frac{1}{2s (\xss_{K}-\xss_j)^{2s}} +\frac{1}{2s (\xss_{K+1}-\xss_K)^{2s}}+\displaystyle\sum_{j=K+2}^{N} 
\displaystyle\frac{1}{2s (\xss_{j}-\xss_K)^{2s}} -\sigma(t,\xss_K)-\dss\right)\\&
\geq \gamma\left(\displaystyle\frac{1}{2s \thh_K^{2s}} -\sigma(t,\xss_K)-\dss\right)\\&
\geq \gamma\left(\displaystyle\frac{1}{2s(L+2)^{2s} (\theta_\ep)^{2s}}-\|\sigma\|_\infty-\dss\right)\\&>0.
\end{split}
\eeq
Let  $t$ be the time such that $\xss_K(t)= \xs_{K+1}^\ep=\xss_K(0)+2\theta_\ep$, then integrating \eqref{xss1inelemep2} in $(0,t)$ we get 
\beqs \xss_K(t)-\xss_K(0)=2\theta_\ep\geq \gamma\left(\frac{1}{2s(L+2)^{2s}\theta_\ep^{2s}}-\|\sigma\|_\infty-\dss\right)t,\eeqs
from which 
\beq\label{tlessteplemtep2}
t\leq t_\ep\eeq where $t_\ep$ is defined by \eqref{tep}.

Comparing $\tau$ with $t_\ep$, from   \eqref{tep} and \eqref{tauupper}, we see that it is possible to choose $L$ big enough so that $$\tau>t_\ep\geq t.$$
Estimate \eqref{thhineqkteplem}  and $\tau>t_\ep$ imply \eqref{xssI=1-xssitep}. 
Moreover,  from  \eqref{thetabarbounbelow},  for any fixed $L$, we can choose $\ep$ small enough so that \eqref{xsk+l<xsk+1} holds.
For such a choice of $L$, the decreasing monotonicity of $\hat{\theta}_K$   implies \eqref{xss2-xss1tep}. Finally,  
 \eqref{tlessteplemtep2} and the increasing monotonicity of $\xss_K$ give
 $$\xss_K(t_\ep)\geq \xss_K(t)= \xs_{K+1}^\ep,$$ which proves \eqref{xss1graterxs2lem}. This concludes the proof of the lemma.\end{proof}

We consider now as barrier the function  $\hat{v}_\ep$ defined in \eqref{vepansbarbarNeven}, where we fix $\dss=\dss_\ep$ in system  \eqref{dynamicalsysbarbarNeven}, with 
$\dss_\ep$ given by Lemma \ref{vbarelessvtilteinitialtimelem}, and $L$ given by Lemma \ref{teplem}. 
For $\ep$ small enough, from \eqref{xss2-xss1tep}, \eqref{xssI=1-xssitep}  and  Proposition \ref{thetaepropNeven}, the function $\hat{v}_\ep$  satisfies 
$$\ep(\hat{v}_\ep)_t-\I \hat{v}_\ep+\displaystyle\frac{1}{\ep^{2s}}W'(\hat{v}_\ep)-\sigma(t,x)\geq0\quad\text{in }(0,t_\ep)\times\R$$ where $t_\ep$ is given by \eqref{tep}.
Moreover from \eqref{veplessvepbarNeven} and Lemma \ref{vbarelessvtilteinitialtimelem} 
\beqs v_\ep(T_\ep^1,x)\le\hat{v}_\ep(0,x)\quad\text{for any }x\in\R.\eeqs
The comparison principle then implies 
\beq\label{concmainhm1} v_\ep(T_\ep^1+t,x)\le\hat{v}_\ep(t,x)\quad\text{for any }(t,x)\in[0,t_\ep]\times\R.\eeq
Now, for $x\leq  \xs_K^\ep+\frac{\theta_\ep}{2}$, from \eqref{xsepk+1-xsepk=thetaepNeven}, \eqref{xss1graterxs2lem} and \eqref{xss2-xss1tep}   we know that 
$$x-\xss_K(t_\ep)\leq-\frac{\theta_\ep}{2}\quad\text{and}\quad \xss_{K+1}(t_\ep)-x\ge \frac{3\theta_\ep}{2}.$$ Therefore, from estimate \eqref{uinfinity} we have

\beq\label{concmainhm2Neven} u\left(\displaystyle\frac{x-\xss_K(t_\ep)}{\ep}\right)+ u\left(\displaystyle\frac{\xss_{K+1}(t_\ep)-x}{\ep}\right)-1\leq C\ep^{2s}\theta_\ep^{-2s}.\eeq
Moreover \eqref{xss2-xss1tep}, \eqref{xssI=1-xssitep}, \eqref{dynamicalsysbarbarNeven} and \eqref{xbarbarpuntoNeven} imply that for $i=K,K+1$
\beq\label{concmainhm3Neven}  |\css_i(t_\ep)|\leq C\theta_\ep^{-2s}.\eeq
From the \eqref{concmainhm2Neven}, \eqref{concmainhm3Neven} and \eqref{concmainhm1}, we infer that, for $x\leq  \xs_K^\ep+\frac{\theta_\ep}{2}$, we have 
\beq\label{veplessN-2barrier}\begin{split} v_\ep(T_\ep^1+t_\ep,x)&\leq \ep^{2s}\hat{\sigma}(t_\ep,x)+\sum_{i=1\atop i\neq K,K+1}^N u\left(\displaystyle\zeta_i\frac{x-\xss_i(t_\ep)}{\ep}\right)-(N-K-1)\\&
-\sum_{i=1\atop i\neq K,K+1}^N\zeta_i\ep^{2s}\css_i(t)\psi\left(\displaystyle\zeta_i\frac{x-\xss_i(t_\ep)}{\ep}\right)+C\ep^{2s}\theta_\ep^{-2s}\\&
\leq \frac{\ep^{2s}}{\beta}\sigma(t_\ep,x)+\sum_{i=1\atop i\neq K,K+1}^N u\left(\displaystyle\zeta_i\frac{x-\xss_i(t_\ep)}{\ep}\right)-(N-K-1)
+\varrho_\ep,\end{split}\eeq
where $$\varrho_\ep=O(\ep^{2s}\theta_\ep^{-2s}).$$ 
From \eqref{thetaepthetaprop}, we see that $\varrho_\ep$ satisfies \eqref{varroepmainthm}.

  Similarly, one can prove that 
\beq\label{concmainhm1bis} v_\ep(T_\ep^1+t,x)\le{\hat{w}}_\ep(t,x)\quad\text{for any }(t,x)\in[0,t_\ep]\times\R.\eeq
where $\hat{w}_\ep$ is defined by 
\beqs
\begin{split}\hat{w}_\ep(t,x)&:=\ep^{2s}\hat{\sigma}(t,x)+\sum_{i=1}^N u\left(\displaystyle\zeta_i\frac{x-\hat{y}_i(t)}{\ep}\right)-(N-K)
-\sum_{i=1}^N\zeta_i\ep^{2s}\hat{d}_i(t)\psi\left(\displaystyle\zeta_i\frac{x-\hat{y}_i(t)}{\ep}\right),\end{split}
\eeqs
where $(\hat{y}_1,\ldots,\hat{y}_N)$ is the solution of the system \eqref{dynamicalsysbarbarNeven} with initial condition
\begin{equation}\label{initailyi}\begin{split}&\hat{y}_i(0)=\xs_i^\ep-\zeta_i\theta_\ep, \quad i\neq K,\\&
\hat{y}_K(0)=\xs_K^\ep-L\theta_\ep,
\end{split}\end{equation} for $L$ large enough, small $\ep$ and $\dss=\dss_\ep$, and
\beqs \hat{d}_i:=\dot{\hat{y}}_i(t),\quad i=1,\ldots,N.\eeqs

\noindent  As before, from \eqref{concmainhm1bis}, we get that, for $x\geq  \xs_K^\ep+\frac{\theta_\ep}{2}$,
 \beq\label{veplessN-2barrierbis}\begin{split} v_\ep(T_\ep^1+t_\ep,x)&\leq \frac{\ep^{2s}}{\beta}\sigma(t_\ep,x)+\sum_{i=1\atop i\neq K,K+1}^N u\left(\displaystyle\zeta_i\frac{x-\hat{y}_i(t_\ep)}{\ep}\right)-(N-K-1)
+\varrho_\ep.\end{split}\eeq
Now, from \eqref{thetabarbounbelow}, \eqref{dynamicalsysbarbarNeven} and \eqref{initailyi}, we see that 
\beq\label{xi-yioep}|x_i(t_\ep)-y_i(t_\ep)|=o(1)\quad\text{as }\ep \to 0,\text{ for }i\neq K,K+1.\eeq
Estimates \eqref{xi-yioep} combined with  \eqref{thetabarbounbelow}, 
imply that there exists a constant $c>0$ independent of $\ep$ such that, for $i\neq K,K+1$,
\beqs\begin{split}&\max(x_{i-1}(t_\ep),y_{i-1}(t_\ep))+c\leq \min(x_i(t_\ep),y_i(t_\ep))\\& \leq \max(x_i(t_\ep),y_i(t_\ep))\leq \min(x_{i+1}(t_\ep),y_{i+1}(t_\ep))-c.\end{split}
\eeqs
Therefore, if we  define
$$x_i^\ep:=\begin{cases}\min(x_i(t_\ep),y_i(t_\ep))&\text{for }i=1,\ldots,K-1\\
\max(x_i(t_\ep),y_i(t_\ep))&\text{for }i=K+1,\ldots,N,
\end{cases}$$ 
we see that the $x_i^\ep$'s satisfy  \eqref{xi+1-xiboundmainthm1}. Moreover, for $x\leq  \xs_K^\ep+\frac{\theta_\ep}{2}$,
 from \eqref{veplessN-2barrier}, \eqref{kposin-knegzeta} and the monotonicity of $u$ we get 

\beqs\begin{split} v_\ep(T_\ep^1+t_\ep,x)&\leq 
\frac{\ep^{2s}}{\beta}\sigma(t_\ep,x)+\sum_{i=1\atop i\neq K,K+1}^N u\left(\displaystyle\zeta_i\frac{x-\xss_i(t_\ep)}{\ep}\right)-(N-K-1)
+\varrho_\ep\\&
=\frac{\ep^{2s}}{\beta}\sigma(t_\ep,x)+\sum_{i=1}^{K-1}u\left(\displaystyle\frac{x-\xss_i(t_\ep)}{\ep}\right)+\sum_{i=K+2}^{N}u\left(\displaystyle\frac{\xss_i(t_\ep)-x}{\ep}\right)\\&-(N-K-1)
+\varrho_\ep\\&
\leq \ep^{2s}\hat{\sigma}(t_\ep,x)+\sum_{i=1}^{K-1}u\left(\displaystyle\frac{x-x_i^\ep}{\ep}\right)+\sum_{i=K+2}^{N}u\left(\displaystyle\frac{x_i^\ep-x}{\ep}\right)\\&-(N-K-1)
+\varrho_\ep,
\end{split}\eeqs
which gives  \eqref{vlesepN-2} for $x\leq  \xs_K^\ep+\frac{\theta_\ep}{2}$. Similarly, from \eqref{veplessN-2barrierbis} and the monotonicity of $u$ we get  \eqref{vlesepN-2} for $x\geq  \xs_K^\ep+\frac{\theta_\ep}{2}$. Estimates \eqref{varroepmainthm}  follow from \eqref{thetaepthetaprop}. This concludes the proof of Theorem \ref{mainthmbeforecollNeven}.


\section{Proof of Theorem \ref{thmexponentialdecayNeven}}\label{R F 24}

Let us consider the function
 \beq\label{h}h(t,x):=\frac{\ep^{2s}}{\beta}\sigma(t,x) +\sum_{i=1\atop i\neq K,K+1}^{N}u\left(\zeta_i\frac{x-x_i(t)}{\ep}\right)-(N-K-1)+\varrho_\ep e^{-\frac{\mu t}{\ep^{2s+1}}}\eeq
 where
 \beq\label{x(t)}x_i(t):=x_i^\ep+\zeta_iK_\ep \varrho_\ep (e^{-\frac{\mu t}{\ep^{2s+1}}}-1),\eeq
 the $x_i^\ep$'s  and $\varrho_\ep$ are given by Theorem \ref{mainthmbeforecollNeven} and the $\zeta_i$'s satisfy \eqref{kposin-knegzeta}
We show that, choosing conveniently $K_\ep$ and $\mu$, ~$h$ is a supersolution of the equation \eqref{vepeq} for small times, as next result states: 
 
\begin{lem}\label{layerexponelem}There exist $\ep_0>0$ and $\mu>0$,  such that for any $\ep< \ep_0$, there exist $K_\ep,\,\tau_\ep>0$  such that  
\beq\label{varrotildelem} \varrho_\ep K_\ep=o(1),\quad\tau_\ep=o(1),\quad \varrho_\ep e^{-\frac{\mu \tau_\ep}{\ep^{2s+1}}}=\ep^{2s}o(1)\quad\text{as }\ep\to 0,\eeq 
and the
function $h$ defined in \eqref{h}-\eqref{x(t)} satisfies
$$\ep h_t-\I h+\displaystyle\frac{1}{\ep^{2s}}W'(h)-\sigma(t,x)\geq 0\quad\text{for any }(t,x)\in (0,\tau_\ep)\times\R.$$
\end{lem}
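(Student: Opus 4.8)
The plan is to verify the supersolution inequality for $h$ by a direct computation, exploiting the fact that each summand $u\bigl(\zeta_i\frac{x-x_i(t)}{\ep}\bigr)$ is (up to scaling) an exact solution of the stationary equation $\I(u)=W'(u)$, so that the ``main order'' terms cancel and only lower-order remainders survive, which are then dominated by the strictly positive contribution of the exponentially decaying term $\varrho_\ep e^{-\mu t/\ep^{2s+1}}$ via the convexity of $W$ near its minima. First I would record the notation: write $\xi_i:=\zeta_i\frac{x-x_i(t)}{\ep}$, so that by \eqref{x(t)} one has $\dot x_i(t)=-\zeta_i K_\ep\varrho_\ep\frac{\mu}{\ep^{2s+1}}e^{-\mu t/\ep^{2s+1}}$, hence $\p_t u(\xi_i)=-\frac{\zeta_i}{\ep}u'(\xi_i)\dot x_i= K_\ep\varrho_\ep\frac{\mu}{\ep^{2s+2}}e^{-\mu t/\ep^{2s+1}}u'(\xi_i)\ge 0$. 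Thus $\ep h_t= \ep^{2s}(\sigma_t/\beta) + (\text{nonnegative drift terms}) - \mu\varrho_\ep\ep^{-2s}e^{-\mu t/\ep^{2s+1}}$. For the nonlocal term, by linearity $\I h = \ep^{2s}\I(\sigma/\beta)/\ep^{?}$ — more precisely one uses the scaling $\I\bigl[u(\zeta_i\tfrac{\cdot-x_i}{\ep})\bigr](x)=\ep^{-2s}\I(u)(\xi_i)=\ep^{-2s}W'(u(\xi_i))$ — so that $\I h = \ep^{-2s}\sum_{i\ne K,K+1}W'(u(\xi_i)) + \ep^{2s}\I(\sigma/\beta)$, noting $\I$ annihilates constants and $\varrho_\ep e^{-\mu t/\ep^{2s+1}}$ is constant in $x$.

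Next I would plug everything into $\ep h_t-\I h+\frac1{\ep^{2s}}W'(h)-\sigma$ and group terms. The $\sigma$-contributions combine into $\ep^{2s}\bigl(\sigma_t/\beta - \I(\sigma/\beta)\bigr)-\sigma$, which by \eqref{sigmaassump} is $O(\ep^{2s})$ (here the $\alpha>s$ Hölder regularity on $\sigma_x$ is what makes $\I\sigma$ bounded). The genuinely delicate part is the potential term: I must estimate
\[
\frac1{\ep^{2s}}\Bigl[W'\Bigl(\tfrac{\ep^{2s}}{\beta}\sigma + \textstyle\sum_{i\ne K,K+1}u(\xi_i)-(N-K-1)+\varrho_\ep e^{-\mu t/\ep^{2s+1}}\Bigr) - \sum_{i\ne K,K+1}W'(u(\xi_i))\Bigr].
\]
Using the $1$-periodicity of $W$ (so that $W'$ is unchanged by subtracting the integer $N-K-1$) and a Taylor expansion, I would write $W'(h+\text{stuff})\approx W'\bigl(\sum u(\xi_i)\bigr) + W''(\cdot)\bigl(\tfrac{\ep^{2s}}{\beta}\sigma+\varrho_\ep e^{-\mu t/\ep^{2s+1}}\bigr)+\dots$, and then handle the ``interaction error'' $W'(\sum u(\xi_i))-\sum W'(u(\xi_i))$. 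This interaction error is controlled using the separation estimates: the $x_i(t)$ for $i\ne K,K+1$ remain at mutual distance bounded below by a constant (by \eqref{xi+1-xiboundmainthm1} and the smallness of $K_\ep\varrho_\ep$ from \eqref{x(t)}), so at any point $x$ at most one $u(\xi_i)$ is far from an integer while the others are exponentially (polynomially, really, using \eqref{uinfinity}) close to $0$ or $1$; a standard ``one layer dominates'' argument — exactly as in the cited references \cite{gonzalezmonneau,dpv,dfv,pv3} — gives that this error is $O(\ep^{2s})$ plus a term that is $\ge -C\ep^{2s}$ uniformly. The remaining main term is $W''(\text{value near an integer})\cdot\varrho_\ep e^{-\mu t/\ep^{2s+1}}/\ep^{2s}$; since $W''>0$ at integers (by \eqref{Wass}, $W''(0)>0$ and periodicity), for $t$ small and $\ep$ small this is bounded below by $c\,\beta\,\varrho_\ep e^{-\mu t/\ep^{2s+1}}/\ep^{2s}$ with $c>0$.

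Collecting, the inequality becomes: we need
\[
-\frac{\mu\varrho_\ep}{\ep^{2s}}e^{-\mu t/\ep^{2s+1}} + \frac{c\beta\varrho_\ep}{\ep^{2s}}e^{-\mu t/\ep^{2s+1}} + (\text{nonnegative drift}) - C\ep^{2s}\ \ge\ 0.
\]
Choosing $\mu$ small enough that $c\beta-\mu\ge c\beta/2>0$, the first two terms give a positive quantity of order $\varrho_\ep\ep^{-2s}e^{-\mu t/\ep^{2s+1}}$. The point is that for $t\le\tau_\ep$ this beats the $C\ep^{2s}$ error provided $\varrho_\ep e^{-\mu\tau_\ep/\ep^{2s+1}}\gg \ep^{4s}$ — which is arranged by the choice in \eqref{varrotildelem}, specifically by taking $\tau_\ep$ so that $\varrho_\ep e^{-\mu\tau_\ep/\ep^{2s+1}}=\ep^{2s}\omega(\ep)$ for a suitable infinitesimal $\omega$ that still dominates $\ep^{4s}$ (possible since $\varrho_\ep/\ep^s=o(1)$ and $\ep^{2s}/\varrho_\ep=o(1)$ from \eqref{varroepmainthm}), and then setting $K_\ep:=o(1)/\varrho_\ep$ chosen so that $K_\ep\varrho_\ep=o(1)$ while $K_\ep$ is large enough to absorb the drift sign requirement — in fact the drift term is already nonnegative, so $K_\ep$ only needs $K_\ep\varrho_\ep=o(1)$ and $K_\ep$ bounded below by a constant so that the layers actually move. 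I expect the main obstacle to be the bookkeeping in the potential-term estimate: one must track the Taylor remainder of $W'$ (cubic term, controlled by $W\in C^{3,\alpha}$) against the size of the perturbation $\ep^{2s}\sigma/\beta+\varrho_\ep e^{-\mu t/\ep^{2s+1}}$, and simultaneously verify that the exponentially decaying quantity stays small enough (i.e. $\varrho_\ep e^{-\mu t/\ep^{2s+1}}=o(1)$ throughout $[0,\tau_\ep]$) that we remain in the convexity region of $W$ around the relevant integer; this is precisely the kind of careful but routine estimate carried out in \cite{pv3}, and I would refer to it for the details rather than reproduce it.
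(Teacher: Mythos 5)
There is a genuine gap in your argument, and it is precisely at the step where you decide that the drift term produced by the moving centers $x_i(t)$ is dispensable. You write the final inequality in the single form
\[
-\frac{\mu\varrho_\ep}{\ep^{2s}}e^{-\mu t/\ep^{2s+1}} + \frac{c\beta\varrho_\ep}{\ep^{2s}}e^{-\mu t/\ep^{2s+1}} + (\text{nonnegative drift}) - C\ep^{2s}\ \ge\ 0,
\]
and then conclude that $K_\ep$ ``only needs $K_\ep\varrho_\ep=o(1)$ and $K_\ep$ bounded below by a constant.'' The convexity-based positive term $\frac{c\beta\varrho_\ep}{\ep^{2s}}e^{-\mu t/\ep^{2s+1}}$ comes from a Taylor expansion of $W'$ around $0$ using $W''(0)=\beta>0$, and this expansion is valid only when the argument of $W'$ lies near an integer. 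That is the case when $x$ is at distance $\gtrsim\ep^\alpha$ from every center $x_i(t)$. But when $x$ lies within $\ep^\alpha$ of some $x_{i_0}(t)$, the profile $u(\xi_{i_0})$ takes intermediate values in $(0,1)$, where one has no lower bound on $W''$ (and $W''$ may even be negative), so the term $\frac{c\beta\varrho_\ep}{\ep^{2s}}e^{-\mu t/\ep^{2s+1}}$ in your display is simply not there. Moreover, in that near-particle regime the error from the potential is $O(1)$ after dividing by $\ep^{2s}$ (the tails $\widetilde u_j$ with $j\ne i_0$ are each $O(\ep^{2s})$, which becomes $O(1)$ after the $\ep^{-2s}$ prefactor), not $O(\ep^{2s})$ as your display suggests. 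This $O(1)$ loss cannot be absorbed unless the drift term is itself bounded below by a quantity of order $\ge C$, which is achieved only via a lower bound on $u'(\xi_{i_0})$ from \eqref{u'infinity} and a specific choice of $K_\ep$; in the paper this forces $K_\ep\mu$ to equal a precise power $\ep^{\alpha(2s+1)-2s-\gamma}$, see \eqref{kepmuchoice}, which in general diverges as $\ep\to0$ rather than staying bounded.

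The fix is the dichotomy that the paper's proof carries out: \emph{Case~1} ($x$ within $\ep^\alpha$ of some $x_{i_0}(t)$), where the Lipschitz bound on $W'$ gives an $O(1)$ error and the drift term $\ep^{-2s-1}K_\ep\varrho_\ep\mu\,e^{-\mu t/\ep^{2s+1}}u'(\xi_{i_0})\ \gtrsim\ K_\ep\varrho_\ep\mu\,\ep^{-\alpha(1+2s)}e^{-\mu t/\ep^{2s+1}}$ supplies the positivity, provided $K_\ep$ is chosen as in \eqref{kepmuchoice}; and \emph{Case~2} (all $|x-x_i(t)|\ge\ep^\alpha$), where the Taylor expansion of $W'$ around $0$ is legitimate, the quadratic remainder is $O(\ep^{4s(1-\alpha)-2s})$, and the convexity term $\frac{\beta}{2}\ep^{-2s}\varrho_\ep e^{-\mu t/\ep^{2s+1}}$ beats it once $\mu\le\beta/4$ and $\gamma$ satisfies \eqref{gammaexplem}. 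Your sketch correctly identifies all the ingredients (drift, convexity, separation of centers, the smallness of $K_\ep\varrho_\ep$, the window for $\tau_\ep$ via \eqref{varroepmainthm}) but misattributes the role of the drift, which is load-bearing rather than a spectator; without that distinction the argument does not close in the near-particle regime.
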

\begin{proof}
Let $\alpha,\,\gamma\in(0,1)$ be such that 
\beq\label{alphaexplem}\frac{s}{2s+1}<\alpha<\frac{1}{2},\eeq 
and
\beq\label{gammaexplem}0<\gamma<\min\{4s(1-\alpha)-2s,\alpha(2s+1)-s\}.\eeq
Let $\tau_\ep$ be such that 
\beq\label{e^tauepexplem}\varrho_\ep e^{-\frac{\mu \tau_\ep}{\ep^{2s+1}}}=\ep^{2s+\gamma},\eeq i.e.,
\beqs \tau_\ep=\frac{\ep^{2s+1}}{\mu}\log\left(\varrho_\ep\ep^{-(2s+\gamma)}\right).\eeqs
Remark that from \eqref{varroepmainthm}, $$ \tau_\ep=o(1)\quad\text{as }\ep\to 0.$$
We compute
\beqs \begin{split}\ep h_t&=\frac{\ep^{2s+1}}{\beta}\sigma_t -\sum_{i=1\atop i\neq K,K+1}^{N}\zeta_i\dot{x}_iu'\left(\zeta_i\frac{x-x_i(t)}{\ep}\right)-\ep^{-2s} \varrho_\ep \mu e^{-\frac{\mu t}{\ep^{2s+1}}}\\&
=\ep^{-2s-1}K_\ep \varrho_\ep\mu  e^{-\frac{\mu t}{\ep^{2s+1}}}\sum_{i=1\atop i\neq K,K+1}^{N}u'\left(\zeta_i\frac{x-x_i(t)}{\ep}\right)-\ep^{-2s} \varrho_\ep \mu e^{-\frac{\mu t}{\ep^{2s+1}}}
+O(\ep^{2s+1}),\end{split}\eeqs
and 
\beqs \I h=\frac{\ep^{2s}}{\beta}\I\sigma+\ep^{-2s}\sum_{i=1\atop i\neq K,K+1}^{N}\I u \left(\zeta_i\frac{x-x_i(t)}{\ep}\right)=\sum_{i=1\atop i\neq K,K+1}^{N}\ep^{-2s} W'\left(u \left(\zeta_i\frac{x-x_i(t)}{\ep}\right)\right)
+O(\ep^{2s}).\eeqs
Then, using the periodicity of $W'$, we get
\beq\label{Ieph}\begin{split}& \ep h_t-\I h+\displaystyle\frac{1}{\ep^{2s}}W'(h)-\sigma\\&=\ep^{-2s-1}K_\ep \varrho_\ep\mu  e^{-\frac{\mu t}{\ep^{2s+1}}} 
\sum_{i=1\atop i\neq K,K+1}^{N}u'\left(\zeta_i\frac{x-x_i(t)}{\ep}\right)-\ep^{-2s}\varrho_\ep \mu e^{-\frac{\mu t}{\ep^{2s+1}}}\\&
+\ep^{-2s} W'\left(\frac{\ep^{2s}}{\beta}\sigma+\sum_{i=1\atop i\neq K,K+1}^{N}u \left(\zeta_i\frac{x-x_i(t)}{\ep}\right)+\varrho_\ep e^{-\frac{\mu t}{\ep^{2s+1}}}\right)
-\sum_{i=1\atop i\neq K,K+1}^{N}\ep^{-2s}W'\left(u \left(\zeta_i\frac{x-x_i(t)}{\ep}\right)\right)\\&
-\sigma+O(\ep^{2s}).
\end{split}\eeq

\noindent\emph{Case 1.}
Suppose that there exits $i_0$ such that $x$ is close to $x_{i_0}(t)$ more than $\ep^\alpha$:
 \beqs
  |x-x_{i_0}(t)|\leq \ep^\alpha\eeqs for fixed  $\alpha$ satisfying \eqref{alphaexplem}. 
  Then estimate \eqref{u'infinity} implies
 \beq\label{iuioexplemset} u' \left(\zeta_{i_0}\frac{x-x_{i_0}(t)}{\ep}\right)\geq c \ep^{(1-\alpha)(1+2s)}.\eeq For $i\neq i_0$,  we simply  have
 \beq\label{uineioexplemst} u' \left(\zeta_{i}\frac{x-x_{i}(t)}{\ep}\right)\geq 0.\eeq
 From \eqref{x(t)} and the fact that the $x_i(t)$'s are well separated at time $t=0$ by   \eqref{xi+1-xiboundmainthm1}, we infer that
 for $K_\ep$ such that $K_\ep\varrho_\ep=o(1)$ as $\ep\to 0$, the $x_i(t)$'s stay well separated for any $t\in(0,\tau_\ep)$. Therefore, if $x$ is close $x_{i_0}(t)$, then there exists $c>0$ independent of $\ep$, such that for any $i\neq i_0$,
 $$ |x-x_i(t)|\geq c.$$ This combined with \eqref{uinfinity} yields, for $i\neq i_0$,
 \beq\label{uinei0lesse2s} \left|\widetilde{u} \left(\zeta_i\frac{x-x_i(t)}{\ep}\right)\right|\leq C\ep^{2s},\eeq
 where here and in what follows, we denote by $C$ several constants independent of $\ep$ and by
 $$\widetilde{u}(x):=u(x)-H(x),$$ where $H$ is the Heaviside function. 
 Hence, from the Lipschitz regularity and the perio\-di\-ci\-ty of $W'$, we get
 \beqs \begin{split} 
&\ep^{-2s} W'\left(\frac{\ep^{2s}}{\beta}\sigma+\sum_{i=1\atop i\neq K,K+1}^{N}u \left(\zeta_i\frac{x-x_i(t)}{\ep}\right)+\varrho_\ep e^{-\frac{\mu t}{\ep^{2s+1}}}\right)
-\ep^{-2s}W'\left(u \left(\zeta_i\frac{x-x_{i_0}(t)}{\ep}\right)\right)\\&
=\ep^{-2s} W'\left(\frac{\ep^{2s}}{\beta}\sigma+u \left(\zeta_{i_0}\frac{x-x_{i_0}(t)}{\ep}\right)+
\sum_{i\neq i_0}\widetilde{u} \left(\zeta_i\frac{x-x_i(t)}{\ep}\right)+\varrho_\ep e^{-\frac{\mu t}{\ep^{2s+1}}}\right)\\&
-\ep^{-2s}W'\left(u \left(\zeta_i\frac{x-x_{i_0}(t)}{\ep}\right)\right)\\&
\geq-C \ep^{-2s}\varrho_\ep e^{-\frac{\mu t}{\ep^{2s+1}}}-C.
\end{split}\eeqs
Moreover, from \eqref{uinei0lesse2s}, the Lipschitz regularity of $W'$ and $W'(0)=0$, we infer that 
\beqs \sum_{i\neq i_0}\ep^{-2s}\left|W'\left(u \left(\zeta_i\frac{x-x_i(t)}{\ep}\right)\right)\right|\leq C.\eeqs
 Therefore,  from \eqref{Ieph}, using the previous estimates, \eqref{e^tauepexplem}, \eqref{iuioexplemset} and \eqref{uineioexplemst},  we get, for any $(t,x)\in(0,\tau_\ep)\times\R$, 
\beqs\begin{split} \ep h_t-\I h+\displaystyle\frac{1}{\ep^{2s}}W'(h)-\sigma\geq& \frac{K_\ep \varrho_\ep\mu }{\ep^{2s+1}}e^{-\frac{\mu t}{\ep^{2s+1}}}c\ep^{(1-\alpha)(1+2s)}
-\frac{\varrho_\ep\mu}{\ep^{2s}} e^{-\frac{\mu t}{\ep^{2s+1}}}- \frac{C\varrho_\ep}{\ep^{2s}}e^{-\frac{\mu t}{\ep^{2s+1}}}-C\\&
=\varrho_\ep e^{-\frac{\mu t}{\ep^{2s+1}}}(cK_\ep\mu\ep^{-\alpha(1+2s)}-\mu\ep^{-2s}-C\ep^{-2s}-C\varrho_\ep^{-1} e^{\frac{\mu t}{\ep^{2s+1}}})\\&
\geq \varrho_\ep e^{-\frac{\mu t}{\ep^{2s+1}}}(cK_\ep\mu\ep^{-\alpha(1+2s)}-C\ep^{-2s-\gamma})
\\&=0\end{split} \eeqs
 if
 \beq\label{kepmuchoice} K_\ep \mu=\frac{C}{c}\ep^{\alpha(2s+1)-2s-\gamma}.\eeq
 Remark that  since $\frac{\varrho_\ep}{\ep^s}=$o(1) as $\ep\to0$ by \eqref{varroepmainthm}, for fixed $\mu$ independent of $\ep$, we have 
 $$\varrho_\ep K_\ep =o(1)\ep^{\alpha(2s+1)-s-\gamma}=o(1)\quad\text{as }\ep\to0, $$
 for $\gamma$ satisfying \eqref{gammaexplem}.
 
 \bigskip
 
 \noindent\emph{Case 2.} Suppose that, for any $i=1,\ldots,N-2$, 
 \beqs |x-x_i(t)|\geq \ep^\alpha.\eeqs
 Then,   estimate \eqref{uinfinity} implies 
  \beq\label{uinei0lesse2scase2} \left|\widetilde{u} \left(\zeta_i\frac{x-x_i(t)}{\ep}\right)\right|\leq C\ep^{2s(1-\alpha)}.\eeq
  Making a Taylor expansion of $W'$ around 0, using that $W'(0)=0$, $W''(0)=\beta>0$ and \eqref{uinei0lesse2scase2}, we get 
  \beqs\begin{split}
  &\ep^{-2s} W'\left(\frac{\ep^{2s}}{\beta}\sigma+\sum_{i=1\atop i\neq K,K+1}^{N}u \left(\zeta_i\frac{x-x_i(t)}{\ep}\right)+\varrho_\ep e^{-\frac{\mu t}{\ep^{2s+1}}}\right)\\&
  =\ep^{-2s} W'\left(\frac{\ep^{2s}}{\beta}\sigma+\sum_{i=1\atop i\neq K,K+1}^{N}\widetilde{u} \left(\zeta_i\frac{x-x_i(t)}{\ep}\right)+\varrho_\ep e^{-\frac{\mu t}{\ep^{2s+1}}}\right)\\&
  =\beta\ep^{-2s}\left(\frac{\ep^{2s}}{\beta}\sigma+\sum_{i=1\atop i\neq K,K+1}^{N}\widetilde{u}  \left(\zeta_i\frac{x-x_i(t)}{\ep}\right)+\varrho_\ep e^{-\frac{\mu t}{\ep^{2s+1}}}\right)+ \ep^{-2s}O(\ep^{2s(1-\alpha)})^2\\&
  +\ep^{-2s}O\left(\varrho_\ep e^{-\frac{\mu t}{\ep^{2s+1}}}\right)^2\\&
  \geq\sigma +\beta\ep^{-2s}\sum_{i=1\atop i\neq K,K+1}^{N}\widetilde{u}  \left(\zeta_i\frac{x-x_i(t)}{\ep}\right)+\frac{\beta}{2}\ep^{-2s}\varrho_\ep e^{-\frac{\mu t}{\ep^{2s+1}}}
  +O(\ep^{4s(1-\alpha)-2s}),
 \end{split} \eeqs
 for $\ep$ small enough. Similarly, we have
   \beqs\begin{split}
\sum_{i=1\atop i\neq K,K+1}^{N}\ep^{-2s}W'\left(u \left(\zeta_i\frac{x-x_i(t)}{\ep}\right)\right)=\beta\ep^{-2s}\sum_{i=1\atop i\neq K,K+1}^{N}\widetilde{u}  \left(\zeta_i\frac{x-x_i(t)}{\ep}\right)
 +O(\ep^{4s(1-\alpha)-2s}).
 \end{split} \eeqs
 
 Combining the previous estimates with \eqref{Ieph} and using that $u'>0$,  \eqref{gammaexplem} and \eqref{e^tauepexplem}, yields, for any $(t,x)\in\R\times (0,\tau_\ep)$,
  \beqs\begin{split}
 \ep h_t-\I h+\displaystyle\frac{1}{\ep^{2s}}W'(h)-\sigma&\geq 
 \frac{\beta}{2}\ep^{-2s}\varrho_\ep e^{-\frac{\mu t}{\ep^{2s+1}}}-\ep^{-2s}\varrho_\ep \mu e^{-\frac{\mu t}{\ep^{2s+1}}}+O(\ep^{4s(1-\alpha)-2s})\\&
 =\ep^{-2s}\varrho_\ep e^{-\frac{\mu t}{\ep^{2s+1}}}\left( \frac{\beta}{2}-\mu+O(\ep^{4s(1-\alpha)})\varrho_\ep^{-1} e^{\frac{\mu t}{\ep^{2s+1}}}\right)\\&
 \geq \ep^{-2s}\varrho_\ep e^{-\frac{\mu t}{\ep^{2s+1}}}\left( \frac{\beta}{2}-\mu-C\ep^{4s(1-\alpha)-2s-\gamma}\right)\\&
 = \ep^{-2s}\varrho_\ep e^{-\frac{\mu t}{\ep^{2s+1}}}\left( \frac{\beta}{2}-\mu-o(1)\right)\\&
 \geq 0,
 \end{split} \eeqs
   if we fix $\mu$ independent of $\ep$ such that
   \beq\label{muchoice}\mu\le \frac{\beta}{4},\eeq and $\ep$ is small enough.
 The lemma is then proven  choosing $\tau_\ep$, $K_\ep$ and $\mu$ satisfying  respectively  \eqref{e^tauepexplem}, \eqref{kepmuchoice} and \eqref{muchoice},   with  $\alpha$ and 
 $\gamma$ satisfying respectively \eqref{alphaexplem} and \eqref{gammaexplem}. 
   \end{proof}
Let us now conclude the proof of Theorem \ref{thmexponentialdecayNeven}.
From Theorem \ref{mainthmbeforecollNeven} we have
$$v_\ep(T_\ep^1,x)\leq h(0,x)\quad\text{for any } x\in\R.$$
Moreover, for $\mu$, $K_\ep$ and $\tau_\ep$ given by Lemma \ref{layerexponelem} and $\ep$ small enough, the function $h(t,x)$ is a
supersolution of the equation
\eqref{vepeq}. The comparison principle then implies
$$v_\ep(T_\ep^1+t,x)\leq h(t,x)\quad\text{for any } (t,x)\in(0,\tau_\ep]\times\R.$$
Choosing $t=\tau_\ep$ above, we get  \eqref{vlesepN-2exponthm} with $$\widetilde{\varrho}_\ep:=\varrho_\ep e^{-\frac{\mu \tau_\ep}{\ep^{2s+1}}}$$ satisfying \eqref{tauepvarromexpthm}. 

Finally, \eqref{xi-tildexiexpthm} is a consequence of \eqref{x(t)} and \eqref{varrotildelem}.

\section{Proof of Theorems~\ref{Ncollisionevencor} 
and \ref{NcollisionN=2K+lcor}}\label{8jAJJa}

We perform a unique proof of Theorems~\ref{Ncollisionevencor} and \ref{NcollisionN=2K+lcor}.
Let $N=2K-l$, with either $l=0$ (Theorem~\ref{Ncollisionevencor})
or $0\ne l\in\N$ (Theorem~\ref{NcollisionN=2K+lcor}).   First of all, notice that, given $x_1^0,\ldots,x_N^0$, for  any $(\zeta_1,\ldots,\zeta_N)\in\{-1,1\}^N$ such that $\sum_{i=1}^N\zeta_i=l$, the initial datum $v_\ep^0$, defined in 
\eqref{vep03}, is below the function $w_\ep^0$ in which the positive particles are the first $K$  and the negative ones the remaining last $K-l$, i.e.,  for any $x\in\R$, 
\beqs v_\ep^0(x)\leq w_\ep^0(x):=\displaystyle\frac{\ep^{2s}}{\beta}
\sigma(0,x)+ \sum_{i=1}^{K}u\left(
\displaystyle\frac{x-x_i^0}{\ep}\right)+ \sum_{i=K+1}^{N}u\left(
\displaystyle\frac{x_i^0-x}{\ep}\right)-(N-K).
\eeqs
The comparison principle then implies, 
\beq\label{vepwepcomparison} v_\ep(t,x)\leq w_\ep(t,x)\quad\text{for any }(t,x)\in(0,+\infty)\times \R,\eeq
where $ w_\ep$ is the solution of  \eqref{vepeq} with initial datum $ w_\ep^0$. Therefore, when $l=0$,  to show that there exist $\mathcal{T}_\ep^K$ and $\Lambda_\ep^K=o(1)$ as $\ep\to 0$ such that 
\beq\label{vepabovecor} v_\ep(\mathcal{T}_\ep^K,x)\leq \Lambda_\ep^K\quad \text{for any }x\in\R,\eeq
it suffices to prove 
\eqref{vepabovecor} for $w_\ep(t,x)$. When $l\in\N$ it suffices to show \eqref{vepNcollN=2K+lcorabove} for $w_\ep(t,x)$.

Hence, let us consider the solution $(x_1(t),\ldots,x_N(t))$  of the 
ODE's system \eqref{dynamicalsysNeven} with 
$$\zeta_i=\begin{cases}1& \text{for }i=1,\ldots,K\\ -1& \text{for }i=K+1,\ldots,N.
\end{cases}$$
As usual, let us denote, for $i=1,\ldots, N-1$,  $$\theta_i(t):=x_{i+1}(t)-x_i(t)$$ and 
$$\theta_i^0:=x_{i+1}^0-x_{i}^0.$$

Let us first assume $\sigma\equiv0$. 
From Lemma \ref{T^1_cboundlemma}, for any initial configuration of the particles, a collision between the particles $x_K$ and $x_{K+1}$ of system \eqref{dynamicalsysNeven} occurs at a finite  time, that we denote by $T_c^1$, satisfying 
\beqs T_c^1\leq\frac{s(\theta_{K}^0)^{1+2s}}{(2s+1)\gamma}.\eeqs
Then by Theorems \ref{mainthmbeforecollNeven} and \ref{thmexponentialdecayNeven}, there exist $T^1_\ep, o^1_\ep>0$ and  
$\widetilde{x}_1^{1,\ep},\ldots,\widetilde{x}_{K-1}^{1,\ep},\widetilde{x}_{K+2}^{1,\ep}\ldots,\widetilde{x}_{N}^{1,\ep}$,  such that, for $i\in\{1,\ldots,K-1,K+2,\ldots,N\}$, 
\beqs\widetilde{x}_i^{1,\ep}=x_i(T_c)+o(1) \quad \text{as }\ep\to0
\eeqs
\beqs T_\ep^1=T_c^1+o(1),\quad 0<o_\ep^1:=\beta\frac{\tilde{\varrho}_\ep}{\ep^{2s}}=o(1)
 \quad\text{as }\ep\to 0,\eeqs
and 
 \beq\label{wlesepN-2exponcor} w_\ep(T_\ep^1,x)\leq \displaystyle\frac{\ep^{2s}}{\beta}o^1_\ep
 + \sum_{i=1}^{K-1}u\left(
\displaystyle\frac{x-\widetilde{x}_i^{1,\ep}}{\ep}\right)+ \sum_{i=K+2}^{N}u\left(
\displaystyle\frac{\widetilde{x}_i^{1,\ep}-x}{\ep}\right)-(N-K-1).\eeq
Now, let us denote by $w^1_\ep(t,x)$ the solution of system  \eqref{vepeq}, with $\sigma=o^1_\ep$ and initial datum the right-hand side of \eqref{wlesepN-2exponcor}. Then, from the comparison principle, we have, for any $(t,x)\in (0,+\infty)\times\R$,
\beq\label{wepw1epcomparison} w_\ep(T_\ep^1+t,x)\leq w^1_\ep(t,x).\eeq
From Lemma \ref{T^1_cboundlemma}, for $\ep$ small enough, the collision time, that we denote by $T_c^2$, of the following ODE's system: for $i\in\{1,\ldots,K-1,K+2,\ldots,N\}$,
\beq\label{ODEsystemsecondstepNeven}\begin{cases}\dot{x}_i=\gamma\left(
\displaystyle\sum_{j\neq i}\zeta_i\zeta_j 
\displaystyle\frac{x_i-x_j}{2s |x_i-x_j|^{1+2s}}+o_\ep^1\right)&\text{in }(0,T_c^2)\\
 x_i(0)=\widetilde{x}_i^{1,\ep},
\end{cases}\eeq
where 
\beqs\zeta_i=\begin{cases}1&\text{for }i=1,\ldots,K-1\\
-1&\text{for }i=K+2,\ldots,N,\\
\end{cases}
\eeqs
is finite. Therefore, by  Theorems  \ref{mainthmbeforecollNeven} and \ref{thmexponentialdecayNeven} and \eqref{wepw1epcomparison} , 
there exist $T^2_\ep,o^2_\ep>0$ and  
$\widetilde{x}_1^{2,\ep},\ldots,\widetilde{x}_{K-2}^{2,\ep},\widetilde{x}_{K+3}^{2,\ep}\ldots,\widetilde{x}_{N}^{2,\ep}$,  such that, for $i\in\{1,\ldots,K-2,K+3,\ldots,N\}$, we have
\beqs\widetilde{x}_i^{2,\ep}=x_i(T^2_c)+o(1) \quad \text{as }\ep\to0
\eeqs
where $(x_1,\ldots,x_{K-2},x_{K+3},\ldots,x_N)$ is the solution of \eqref{ODEsystemsecondstepNeven},
\beqs T_\ep^2=T_c^2+o(1),\quad o^2_\ep=o(1)
 \quad\text{as }\ep\to 0,\eeqs
and 
 \beqs w_\ep(T_\ep^1+T_\ep^2,x)\leq \displaystyle\frac{\ep^{2s}}{\beta}(o^1_\ep+o^2_\ep)
 + \sum_{i=1}^{K-2}u\left(
\displaystyle\frac{x-\widetilde{x}_i^{2,\ep}}{\ep}\right)+ \sum_{i=K+3}^{N}u\left(
\displaystyle\frac{\widetilde{x}_i^{2,\ep}-x}{\ep}\right)-(N-K-2).\eeqs
Let us first assume $l=0$. Then,
repeating the argument, we see that, after $K$ collisions, if we denote  $$\mathcal{T}_\ep^{K}:= T_\ep^1+\ldots +T_\ep^{K}$$
and $$\Lambda_\ep^K:= \displaystyle\frac{\ep^{2s}}{\beta}(o^1_\ep+\ldots+o^{K-1}_\ep)+\varrho_\ep^K,$$ then, for any $x\in\R$, 
\beqs w_\ep(\mathcal{T}_\ep^K,x)\leq \Lambda_\ep^K.\eeqs
The last estimate and \eqref{vepwepcomparison}  imply \eqref{vepabovecor}. Remark that Theorem  \ref{thmexponentialdecayNeven}  cannot be applied after the last collision, since
 there are only two  remaining particles before the last collision occurs, therefore the hypothesis   $N>2$ of the theorem is not satisfied. 

Similarly, when $l\in\N$, after $K-l$ collisions, if we denote  $$\mathcal{T}_\ep^{K-l}:= T_\ep^1+\ldots +T_\ep^{K-l}$$
and $$\Lambda_\ep^{K-l}:= \displaystyle\frac{\ep^{2s}}{\beta}(o^1_\ep+\ldots+o^{K-l}_\ep),$$ we get that $w_\ep(t,x)$, and therefore by  \eqref{vepwepcomparison} $v_\ep(t,x)$, satisfies inequality \eqref{vepNcollN=2K+lcorabove},
with $\Lambda_\ep^{K-l}$ satisfying \eqref{lambdak-lcor}. Differently from the previous case, when $l\in\N$, Theorem  \ref{thmexponentialdecayNeven}  can be applied after the last collision, since
 there are more than two  remaining particles before the last collision occurs.
To show \eqref{vepNcollN=2K+lcorbelow} when $l\in\N$ and    
\beq\label{vepbelowcor} v_\ep(\mathcal{T}_\ep^K,x)\geq -\Lambda_\ep^K\quad \text{for any }x\in\R,\eeq when $l=0$, 
we consider the function $z_\ep$ to be the solution of  \eqref{vepeq} with initial datum $z_\ep^0$ in which the negative particles are now the first $K-l$  and the positive ones the remaining last $K$, i.e., 
\beqs z_\ep^0(x):=\displaystyle\frac{\ep^{2s}}{\beta}
\sigma(0,x)+ \sum_{i=1}^{K-l}u\left(
\displaystyle\frac{x_i^0-x}{\ep}\right)+ \sum_{i=K-l+1}^{N}u\left(
\displaystyle\frac{x-x_i^0}{\ep}\right)-(N-K).
\eeqs
The comparison principle then implies 
\beqs v_\ep(t,x)\geq z_\ep(t,x)\quad\text{for any }(t,x)\in(0,+\infty)\times \R.\eeqs
A similar argument as before, then gives \eqref{vepNcollN=2K+lcorbelow} when $l\in\N$ and \eqref{vepbelowcor} when $l=0$. This concludes the proof of 
Theorems~\ref{Ncollisionevencor}  and \ref{NcollisionN=2K+lcor} in the case $\sigma\equiv 0$.

The result for $\sigma\not\equiv 0$ such that $\|\sigma\|_\infty\leq\overline{\sigma}$ with  $\overline{\sigma}$ small enough, follows from the case $\sigma\equiv 0$ and the continuity up to the collision time,  of the solution of the ODE's system 
\beqs\begin{cases}\dot{x}_i=\gamma\left(
\displaystyle\sum_{j\neq i}\zeta_i\zeta_j 
\displaystyle\frac{x_i-x_j}{2s |x_i-x_j|^{1+2s}}-\zeta_i\delta\right)&\text{in }(0,T_c)\\
 x_i(0)=\widetilde{x}_i^\ep,
\end{cases}\eeqs

with respect to the parameter $\delta$ (Proposition~\ref{tclimpropNeven}).


\section{Proof of Theorems~\ref{Ncollisionevencorsigma0} 
and \ref{NcollisionN=2K+lcorinfinity}}\label{AKAaKA}

\subsection{Proof of Theorem~\ref{Ncollisionevencorsigma0}}
The proof of Theorem~\ref{Ncollisionevencorsigma0} follows the same steps as in the proof of  Theorem~1.2 in \cite{pv3} and we only sketch it. 
Consider the function $h(\tau,\xi)$ which is solution of
\beq\label{hode}\begin{cases}h_\tau+W'(h)=0,&\forall\tau\in(0,+\infty)\\
h(0,\xi)=\xi.
\end{cases}
\eeq
Then assumptions \eqref{Wass} and \eqref{lambdaK} imply that 
there exists $\ep_0>0$ such that for any $\ep<\ep_0$,
$h$  satisfies: $h(\tau,0)\equiv 0$;  if $\xi\in (0,\Lambda^K_\ep]$, then
\beqs0<h(\tau,\xi)\leq \xi e^{-\frac{\beta}{2}\tau};\eeqs
if $\xi\in [-\Lambda^K_\ep,0)$, then 
\beqs-\xi e^{-\frac{\beta}{2}\tau}\le h(\tau,\xi)<0,\eeqs
where $\beta=W''(0)>0$. 
Now, the function $\tilde{h}(t,x):=h(\frac{t-\mathcal{T}_\ep^K}{\ep^{2s+1}},\Lambda^K_\ep)$, where $\mathcal{T}_\ep^K$ is given by Theorem~\ref{Ncollisionevencor}, is solution of the equation \eqref{vepeq} for $\sigma\equiv0$ and $t>\mathcal{T}_\ep^K$, with  $\tilde{h}(\mathcal{T}_\ep^K,x)=\Lambda^K_\ep$. Then, the comparison principle and estimate \eqref{vepNcoll} imply 
$$v_\ep(t,x)\leq \tilde{h}(t,x)\leq \Lambda^K_\ep e^{-\frac{\beta}{2}\frac{t-\mathcal{T}_\ep^K}{\ep^{2s+1}}}\quad \text{for any }x\in\R,\,t>\mathcal{T}_\ep^K.$$ 
Similarly, one can  prove that
$$v_\ep(t,x)\geq -\Lambda^K_\ep e^{-\frac{\beta}{2}\frac{t-\mathcal{T}_\ep^K}{\ep^{2s+1}}}\quad \text{for any }x\in\R,\,t>\mathcal{T}_\ep^K,$$ and this proves \eqref{vexpontozeroNeven}.


\subsection{Proof of Theorem~\ref{NcollisionN=2K+lcorinfinity}}

We start by proving  a general result for the solution of the following system of ODE's: 
\beq\label{dynamicalsysNsameorient}\begin{cases}\dot{x}_i=\gamma
\displaystyle\sum_{j\neq i}
\displaystyle\frac{x_i-x_j}{2s |x_i-x_j|^{1+2s}}-\delta'(t)&\text{in }(0,T_c)\\
 x_i(0)=x_i^0,
\end{cases}\eeq
$i=1,\ldots,N$, where $\delta$ is a differentiable function. 
\begin{lem}\label{Nevenxm+1-xmlemsameor} Let $\delta:[0,+\infty)\to\R$ be  differentiable in $(0,+\infty)$. 
Let $(x_1(t),\ldots,x_N(t))$ be the solution of \eqref{dynamicalsysNsameorient} with  $x_{i+1}^0-x^0_{i}=\theta_0>0$, for any $i=1,\ldots, N-1$. Then there exists a constant $k$
depending on $N,\,\gamma, s$ and $\theta_0$, 
such that for any $i=1,\ldots,N-1$, we have 
\beq\label{Nevenm+1-xmlemsameoreq}x_{i+1}(t)-x_i(t)\geq k(1+t)^\frac{1}{1+2s}\quad\text{for any } t>0.\eeq
Moreover,  if  $N=2m$, $m\in\N$,  then
\beq\label{Nevenm+1-xmlemsameoreqbis}x_{m+1}(t)+x_m(t)=
x_{m+1}^0+x_m^0+2\delta(0)-2\delta(t)\quad\text{for any }t>0,\eeq
if instead, $N=2m+1$, $m\in\N$, then 
\beq\label{Nevenm+1-xmlemsameoreqbisodd}x_{m+1}(t)=x^0_{m+1}+\delta(0)-\delta(t)\quad\text{for any }t>0.\eeq
In particular $T_c=+\infty$. 
\end{lem}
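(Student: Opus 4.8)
The plan is to first kill the drift, and then treat the symmetry identities and the spreading estimate separately.

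\emph{Reduction.} Set $z_i(t):=x_i(t)+\delta(t)-\delta(0)$. Since $z_i-z_j=x_i-x_j$, this substitution turns \eqref{dynamicalsysNsameorient} into the autonomous system $\dot z_i=\gamma\sum_{j\ne i}\frac{z_i-z_j}{2s|z_i-z_j|^{1+2s}}$ with $z_i(0)=x_i^0$, and each gap $z_{i+1}(t)-z_i(t)$ coincides with $x_{i+1}(t)-x_i(t)$; so it suffices to argue with the $z_i$'s. For \eqref{Nevenm+1-xmlemsameoreqbis} and \eqref{Nevenm+1-xmlemsameoreqbisodd} I would use the reflection symmetry: since $x_1^0,\dots,x_N^0$ are equally spaced, $C:=z_i^0+z_{N+1-i}^0$ is independent of $i$, and one checks directly that $w_i(t):=C-z_{N+1-i}(t)$ solves the same autonomous system with the same initial datum $w_i(0)=z_i^0$; by uniqueness $z_i(t)+z_{N+1-i}(t)\equiv z_i^0+z_{N+1-i}^0$. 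Taking $i=m$ when $N=2m$, resp. $i=m+1$ when $N=2m+1$, and translating back through $z_i=x_i+\delta(t)-\delta(0)$ yields exactly \eqref{Nevenm+1-xmlemsameoreqbis}, resp. \eqref{Nevenm+1-xmlemsameoreqbisodd}.

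\emph{The spreading bound.} Write $g_i:=z_{i+1}-z_i>0$ and $g_{\min}(t):=\min_i g_i(t)$, a Lipschitz function of $t$. The heart of the matter is the following differential inequality: at any time, and for any index $i_0$ realizing $\min_i g_i$,
\beqs
\dot g_{i_0}\ \ge\ \frac{\gamma}{s\,(N-1)^{2s}}\,g_{i_0}^{-2s}.
\eeqs
To prove it, decompose $\dot g_{i_0}=\dot z_{i_0+1}-\dot z_{i_0}$ into the ``self'' part $\frac{\gamma}{s}g_{i_0}^{-2s}$, coming from the mutual repulsion of $z_{i_0}$ and $z_{i_0+1}$, plus — writing $f(r):=\frac1{2s}r^{-2s}$ — a sum over $j<i_0$ of terms $\gamma\bigl(f(a_j+g_{i_0})-f(a_j)\bigr)$ with $a_j:=z_{i_0}-z_j$, plus the mirror sum over $j>i_0+1$. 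Each cross term equals $-\gamma\int_{a_j}^{a_j+g_{i_0}}r^{-1-2s}\,dr$; since consecutive $a_j$ differ by a gap $g_j\ge g_{\min}=g_{i_0}$, the intervals $[a_j,a_j+g_{i_0}]$ are pairwise disjoint, have total length $<(N-1)g_{i_0}$, and all lie in $[g_{i_0},\infty)$ (their leftmost point is $g_{i_0-1}\ge g_{i_0}$). Monotonicity of $r\mapsto r^{-1-2s}$ then bounds the sum of these integrals by $\int_{g_{i_0}}^{(N-1)g_{i_0}}r^{-1-2s}\,dr=f(g_{i_0})-f((N-1)g_{i_0})$, and symmetrically on the other side, so
\beqs
\dot g_{i_0}\ \ge\ \tfrac{\gamma}{s}g_{i_0}^{-2s}-2\gamma\bigl(f(g_{i_0})-f((N-1)g_{i_0})\bigr)=2\gamma f\bigl((N-1)g_{i_0}\bigr)=\tfrac{\gamma}{s(N-1)^{2s}}\,g_{i_0}^{-2s};
\eeqs
the boundary cases $i_0\in\{1,N-1\}$, where one of the two cross sums is empty, only improve the constant. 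Hence $\frac{d}{dt}g_{\min}^{2s+1}\ge\frac{(2s+1)\gamma}{s(N-1)^{2s}}$ in the a.e. sense (using that $g_{\min}$ is Lipschitz and, where differentiable, has derivative $\dot g_{i_0}$ for some argmin $i_0$), and integrating from $t=0$ (where $g_{\min}=\theta_0$) gives $g_{\min}(t)\ge\bigl(\theta_0^{2s+1}+\frac{(2s+1)\gamma}{s(N-1)^{2s}}t\bigr)^{1/(2s+1)}\ge k(1+t)^{1/(1+2s)}$ for a suitable $k=k(N,\gamma,s,\theta_0)>0$. Since $x_{i+1}-x_i=g_i\ge g_{\min}$, this is \eqref{Nevenm+1-xmlemsameoreq}; moreover the gaps stay $\ge\theta_0>0$ throughout the maximal existence interval, so the vector field stays bounded, the solution is global, no collision ever occurs, and $T_c=+\infty$.

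\emph{Main obstacle.} The delicate step is precisely the lower bound on $\dot g_{i_0}$: for a (nearly) uniform configuration the cross terms \emph{exactly} cancel the self term, so the crude estimate yields only $\dot g_{i_0}\ge0$, which is useless for proving growth. The positive remainder $2\gamma f((N-1)g_{i_0})$ must be squeezed out of the purely combinatorial fact that the relevant disjoint length-$g_{i_0}$ intervals, having finite total length, are trapped in $[g_{i_0},(N-1)g_{i_0}]$ and therefore cannot exhaust $\int_{g_{i_0}}^{\infty}r^{-1-2s}\,dr$; keeping this gain quantitative and uniform in $N$ is the crux of the argument.
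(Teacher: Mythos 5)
Your proof is correct, and the reduction to the autonomous system (shifting by $\delta$) and the reflection-plus-uniqueness argument for the centering identities \eqref{Nevenm+1-xmlemsameoreqbis}--\eqref{Nevenm+1-xmlemsameoreqbisodd} coincide in substance with what the paper does. The spreading estimate, however, follows a genuinely different route. The paper first exploits the symmetry $x_{m+i}=-x_{m-i+1}$ (after centering) and then runs a double induction: an outside-in induction on the symmetric distances $\theta_{2m-j+1,j}$, giving constants $k_j$ that degrade as $j$ increases, followed by a middle-out induction to recover the individual gaps $\theta_i$. Your argument instead works directly with the minimal gap $g_{\min}$ and requires no symmetry at all: the key point is the rearrangement bound showing that the attractive cross terms, written as disjoint integrals of $r^{-1-2s}$ over intervals of length $g_{i_0}$ contained in $[g_{i_0},\infty)$, cannot cancel more than $2\gamma\bigl(f(g_{i_0})-f((N-1)g_{i_0})\bigr)$ of the self-repulsion, leaving the quantitative surplus $\frac{\gamma}{s(N-1)^{2s}}g_{i_0}^{-2s}$. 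This yields a single differential inequality $\frac{d}{dt}g_{\min}^{2s+1}\geq\frac{(2s+1)\gamma}{s(N-1)^{2s}}$ (a.e., for the Lipschitz function $g_{\min}$), hence a uniform constant $k$, whereas the paper's induction produces nested constants; your version is also slightly more general in that it only needs all initial gaps $\geq\theta_0$, not equal spacing, for the lower bound \eqref{Nevenm+1-xmlemsameoreq}. One small bookkeeping remark: the spreading bound should be established first so that global existence and absence of collisions are in hand before invoking uniqueness for the reflection identity, but since the spreading argument is independent of that identity this ordering is immediate.
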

\begin{proof}

We perform the proof of the lemma in the case $N=2m$, being the case $N=2m+1$ similar. 
Let us first consider the case $\delta\equiv 0$. 
Since the system of ODE's in  \eqref{dynamicalsysNsameorient} is invariant under translations of particles, that is, $(x_1(t)+a,\ldots,x_N(t)+a)$ is solution of the ODE's in  \eqref{dynamicalsysNsameorient}, for any $a\in\R$, without loss of generality we may assume that the initial configuration of the particles is symmetric with respect to the origin.
Therefore, suppose that, for $i=1,\ldots, m$, 
$$x^0_{m+i}=-x^0_{m-i+1}.$$
Then, the solution of   \eqref{dynamicalsysNsameorient} satisfies, for $i=1,\ldots, m$, 
\beq\label{symmcond}x_{m+i}(t)=-x(t)_{m-i+1},\quad\text{for any }t>0.\eeq
Indeed, let $(y_{m+1}(t),\ldots,y_{2m}(t))$ be the solution of the following system: for $i=1,\ldots, m$

\beqs\begin{cases}\dot{y}_{m+i}=\gamma\left(
\displaystyle\sum_{j=1\atop j\neq i }^m
\displaystyle\frac{y_{m+i}-y_{m+j}}{2s |y_{m+i}-y_{m+j}|^{1+2s}}+\displaystyle\sum_{ j=1}^m
\displaystyle\frac{y_{m+i}+y_{m+j}}{2s |y_{m+i}+y_{m+j}|^{1+2s}}\right)&\text{in }(0,T_c)\\
 y_{m+i}(0)=x_{m+i}^0.
\end{cases}\eeqs
Set, for $i=1,\ldots,m$ and $t\ge0$, 
$$y_{m-i+1}(t):=-y_{m+i}(t).$$
Then $(y_1(t),\ldots,y_N(t))$ is solution of \eqref{dynamicalsysNsameorient} and by uniqueness it coincides with $(x_1(t),\ldots,x_N(t))$. 
This implies that $(x_1(t),\ldots,x_N(t))$ satisfies property \eqref{symmcond}. In particular \eqref{Nevenm+1-xmlemsameoreqbis} holds true. 
Next, denote $$\theta_{j,i}(t):=x_{j}(t)-x_i(t).$$
In order to prove \eqref{Nevenm+1-xmlemsameoreq}, we show that for $j=1,\ldots,m$, there exists $k_j>0$ such that 
\beq\label{distancepartci}
\theta_{2m-j+1,j}(t)\ge k_j(1+ t)^\frac{1}{1+2s}.
\eeq
We prove \eqref{distancepartci} by induction. Let $j=1$. From \eqref{dynamicalsysNsameorient}, we see that $\theta_{2m,1}(t)$ solves:
\beqs\begin{split}
\dot{\theta}_{2m,1}&=\gamma\left(
\displaystyle\sum_{j=1 }^{2m-1}
\displaystyle\frac{1}{2s (x_{2m}-x_j)^{2s}}+\sum_{j=2 }^{2m}
\displaystyle\frac{1}{2s (x_{j}-x_1)^{2s}}\right)
\geq 
\frac{\gamma}{s \theta^{2s}_{2m,1}}.
\end{split}
\eeqs
A solution of equation $\dot{\theta}=\frac{\gamma}{s \theta^{2s}}$ is given by $\theta(t)=\left((N-1)^{1+2s}\theta_0^{1+2s}+\frac{(2s+1)\gamma}{s}t\right)^\frac{1}{2s+1}$. Since in addition, 
$\theta(0)=(N-1)\theta_0=\theta_{2m,1}(0)$, by comparison $\theta_{2m,1}(t)\geq \theta(t)$ for any $t>0$. This implies
\eqref{distancepartci} for $j=1$, with  $k_1= \min\left\{(N-1)\theta_0,\left(\frac{(2s+1)\gamma}{s}\right)^\frac{1}{2s+1}\right\}$.

Now assume that \eqref{distancepartci} holds true for $j=1,\ldots,m-1$ and let us prove it for $j=m$. Remark that, from  \eqref{symmcond}, we have, for $j=1,\ldots,m$,
\beqs\begin{split} \theta_{2m-j+1,j}&=x_{2m-j+1}-x_j=x_{2m-j+1}-x_{m+1}+\theta_{m+1,m}+x_m-x_j\\&
=2(x_{2m-j+1}-x_{m+1})+\theta_{m+1,m}=2(x_m-x_j)+\theta_{m+1,m}.
\end{split}
\eeqs
Therefore, from \eqref{dynamicalsysNsameorient}, we see that $\theta_{m+1,m}(t)$ solves:
\beqs\begin{split}
\dot{\theta}_{m+1,m}&=\frac{\gamma}{2s}\left(\displaystyle\sum_{j=1 }^{m}
\displaystyle\frac{1}{ (x_{m+1}-x_j)^{2s}}-\displaystyle\sum_{j=m+2 }^{2m}\displaystyle\frac{1}{ (x_j-x_{m+1})^{2s}}\right.\\&
\left.-\displaystyle\sum_{j=1 }^{m-1}
\displaystyle\frac{1}{ (x_{m}-x_j)^{2s}}+\displaystyle\sum_{j=m+1 }^{2m}\displaystyle\frac{1}{ (x_j-x_{m})^{2s}}\right)\\&
\ge \frac{\gamma}{2s}\left(\frac{2}{\theta^{2s}_{m+1,m}}-\displaystyle\sum_{j=1 }^{m-1}\displaystyle\frac{1}{ (x_{2m-j+1}-x_{m+1})^{2s}}-\displaystyle\sum_{j=1 }^{m-1}
\displaystyle\frac{1}{ (x_{m}-x_j)^{2s}}\right)\\&
=\frac{\gamma}{s}\left(\frac{1}{\theta^{2s}_{m+1,m}}-\displaystyle\sum_{j=1 }^{m-1}
\displaystyle\frac{2^{2s}}{ (\theta_{2m-j+1,j}-\theta_{m+1,m})^{2s}}\right).
\end{split}
\eeqs
Then, using  \eqref{distancepartci} for $j=1,\ldots,m-1$, from the previous inequalities we get
 \beqs
 \dot{\theta}_{m+1,m}\ge \frac{\gamma}{s}\left(\frac{1}{\theta^{2s}_{m+1,m}}-\displaystyle\sum_{j=1 }^{m-1}
\displaystyle\frac{2^{2s}}{  (k_j (1+t)^\frac{1}{1+2s}-\theta^{2s}_{m+1,m})^{2s}}\right).
\eeqs
 Now, we consider the function $g(t)=k(1+t)^\frac{1}{1+2s}$ for some $0<k<k_j$ to be determined. We have
 \beqs \begin{split}
  &\dot{g}- \frac{\gamma}{s}\left(\frac{1}{g^{2s}}-\displaystyle\sum_{j=1 }^{m-1}
\displaystyle\frac{2^{2s}}{  (k_j(1+ t)^\frac{1}{1+2s}-g)^{2s}}\right)\\&=(1+t)^{-\frac{2s}{1+2s}}\left(\frac{k}{1+2s}- \frac{\gamma}{s}\left(k^{-2s}-\displaystyle\sum_{j=1 }^{m-1}
\displaystyle 2^{2s}( k_j-k)^{-2s}\right)\right)\leq 0,
\end{split}
\eeqs
for $k>0$ small enough. Therefore, there exists $k>0$ such that $g$ is subsolution of the equation
\beqs
 \dot{\theta}= \frac{\gamma}{s}\left(\frac{1}{\theta^{2s}}-\displaystyle\sum_{j=1 }^{m-1}
\displaystyle\frac{2^{2s}}{  (k_j(1+t)^\frac{1}{1+2s}-\theta)^{2s}} \right).
\eeqs
Since in addition, for $k\leq\theta_0$, we have that  $g(0)\leq\theta_{m+1,m}(0)$, by comparison we get $g(t)\leq\theta_{m+1,m}(t)$ for any $t>0$, i.e.,   \eqref{distancepartci} for $j=m$, with $k_j=k$.
This concludes the proof of \eqref{distancepartci}.
We are now ready to prove \eqref{Nevenm+1-xmlemsameoreq}.  From \eqref{symmcond} it suffices to show \eqref{Nevenm+1-xmlemsameoreq} for $i=m,\ldots,N-1.$
We proceed by induction. Inequality  \eqref{Nevenm+1-xmlemsameoreq} for $i=m$ is given by \eqref{distancepartci} for $j=m$. Assume now that 
\eqref{Nevenm+1-xmlemsameoreq} holds true for $i=m,\ldots,N-2.$ Then,  from \eqref{dynamicalsysNsameorient}, we see that $\theta_{N,N-1}(t)=x_N(t)-x_{N-1}(t)$ solves:
\beqs\begin{split}
\dot{\theta}_{N,N-1}&=\frac{\gamma}{2s}\left(\displaystyle\frac{2}{ \theta_{N,N-1}^{2s}}+\displaystyle\sum_{j=1 }^{N-2}
\displaystyle\frac{1}{ (x_{N}-x_j)^{2s}}-\displaystyle\sum_{j=1 }^{N-2}
\displaystyle\frac{1}{ (x_{N-1}-x_j)^{2s}}\right)\\&
\geq \frac{\gamma}{s}\left(\displaystyle\frac{1}{ \theta_{N,N-1}^{2s}}-\frac{C}{(1+t)^\frac{2s}{2s+1}}\right),\end{split}
\eeqs
 for some $C>0$. 
Arguing as before, we get \eqref{Nevenm+1-xmlemsameoreq} for $i=N-1$ and this concludes the proof of the lemma when $\delta\equiv 0$.
Now, let us consider the general case, when the assumption $\delta\equiv 0$ does not hold. Define $z_i(t):=x_i(t)+\delta(t)$, for $i=1,\ldots,N$. Then, 
$(z_1(t),\ldots,z_N(t))$ is solution of the initial value problem   \eqref{dynamicalsysNsameorient} with $\delta\equiv0$ and initial conditions $x_i^0+\delta(0)$. Therefore,  the results just proven in the case 
$\delta\equiv 0$ and applied to $(z_1(t),\ldots,z_N(t))$, yield  \eqref{Nevenm+1-xmlemsameoreq}, \eqref{Nevenm+1-xmlemsameoreqbis} and \eqref{Nevenm+1-xmlemsameoreqbisodd}
for $(x_1(t),\ldots,x_N(t))$. This concludes the proof of the lemma.
\end{proof}

Let us now prove Theorem~\ref{NcollisionN=2K+lcorinfinity}.   
In order to do it, we consider appropriate barriers for the solution $v_\ep$ of \eqref{vepeq}-\eqref{vep03}
with $\sigma\equiv0$. Set $$\theta_m:=\min_{i=1,\ldots,l-1}\underline{x}_{i+1}^\ep-\underline{x}_{i}^\ep$$ 
and $$0\le\sigma_\ep:=\frac{\Lambda_\ep^{K-l}}{\ep^{2s}}=o(1)\quad\text{as }\ep\to0,$$ 
where $\underline{x}_1^\ep,\ldots, \underline{x}_l^\ep$ and $\Lambda_\ep^{K-l}$
are given by 
Theorem~\ref{NcollisionN=2K+lcor}.
Let $w_\ep(t,x)$ be the solution of \eqref{vepeq} with $\sigma\equiv0$
 and  with the following initial condition 
$$w_\ep(0,x)=\sum_{i=1}^{l}u\left(
\displaystyle\frac{x-\underline{y}_i^\ep}{\ep}\right)+\ep^{2s}\sigma_\ep,$$
where $u$ is the solution of \eqref{u}, and 
$\underline{y}_1^\ep,\ldots, \underline{y}_l^\ep$ are defined as follows
$$\underline{y}_1^\ep:=\underline{x}_1^\ep,\quad\quad \underline{y}_{i}^\ep:=\underline{x}_{i}^\ep+\theta_m,\text{ for }i=2,\ldots,l.$$
From \eqref{vepNcollN=2K+lcorabove} and the monotonicity of $u$, we have that $v_\ep(\mathcal{T}_\ep^{K-l},x)\leq w_\ep(0,x)$ for any $x\in\R$. Then by the comparison principle
\beq\label{vepl<wepinfintytime}v_\ep(\mathcal{T}_\ep^{K-l}+t,x)\leq w_\ep(t,x)\quad \text{for any }(t,x)\in(0,+\infty)\times\R.\eeq

\noindent Now, we argue as in Section \ref{IAL9}. 
Consider the function 
\beq\label{overwepsuper}\overline{w}_\ep(t,x) := \ep^{2s}\overline{\sigma}_\ep+\sum_{i=1}^l u\left(\displaystyle\frac{x-x_i(t)}{\ep}\right)
-\sum_{i=1}^l\ep^{2s}c_i(t)\psi\left(\displaystyle\frac{x-x_i(t)}{\ep}\right)
\eeq
where $u$ and $\psi$ are respectively the solution of \eqref{u} and \eqref{psi}, $(x_1(t),\ldots,x_l(t))$ is the solution of \eqref{dynamicalsysNsameorient} with 
\beq\label{wbarbarriercond2}N=l,\quad \delta(t)=(1+2s)(\sigma_\ep+\delta_\ep)(1+t)^{\frac{1}{1+2s}}\quad\text{and }x_i^0= \underline{y}_{i}^\ep-\delta_\ep,\eeq  and where 
\begin{equation}\label{wbarbarriercond3}\begin{split}&c_i(t)=\dot{x}_i(t),\\&\overline{\sigma}_\ep(t)=\frac{\delta'(t)}{W''(0)}=\frac{(\sigma_\ep+\delta_\ep)(1+t)^{-\frac{2s}{1+2s}}}{W''(0)}
,\end{split}\end{equation}
 and $\delta_\ep=o(1)$ as $\ep\to0$ to be determined. 
We want to show that there exists $\delta_\ep$ such that the function $\overline{w}_\ep(t,x)$ is an upper barrier for $w_\ep(t,x)$. By Lemma~\ref{initialcondpropNeven}, we have that 
\beq\label{initialcondwep}w_\ep(0,x)\leq \overline{w}_\ep(0,x)\quad\text{for any }x\in\R.\eeq
Moreover, $\overline{w}_\ep(t,x)$ is a supersolution of \eqref{vepeq}, as stated in the following proposition. 
\begin{prop}\label{thetaepropNevenlpart} There exist $\ep_0>0$ and $0<\delta_\ep=o(1)$ as $\ep\to0$, such that 
such that for any $\ep<\ep_0$, if $(x_1,\ldots,x_l)$ is a solution of the 
ODE system in \eqref{dynamicalsysNsameorient} where $N$ and  $\delta(t)$,   are given by \eqref{wbarbarriercond2}, then the  function $\overline{w}_\ep$ defined by \eqref{overwepsuper} satisfies
\beq\label{supersolwep}\ep(\overline{w}_\ep)_t-\I \overline{w}_\ep+\displaystyle\frac{1}{\ep^{2s}}W'(\overline{w}_\ep)\geq 0\eeq
for any $(t,x)\in(0,+\infty)\times\R$.
\end{prop}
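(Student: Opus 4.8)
The plan is to insert $\overline{w}_\ep$ into the parabolic operator $\mathcal{N}[v]:=\ep v_t-\I v+\ep^{-2s}W'(v)$ and show the residual is nonnegative, following the corrector computation already used for Proposition~\ref{thetaepropNeven} (that is, Proposition~5.3 of \cite{pv3}); the new ingredients here are that all the layers are positively oriented, so the particle system \eqref{dynamicalsysNsameorient} never collides and the particles spread apart, and that the external stress is replaced by the time--decaying term $\delta'(t)$. Writing $\xi_i:=(x-x_i(t))/\ep$, one uses $\I u=W'(u)$, the scaling identity $\I[\varphi((\cdot-x_i)/\ep)](x)=\ep^{-2s}(\I\varphi)(\xi_i)$, and equation \eqref{psi} for $\psi$ in the form $\I\psi=W''(u)\psi+u'+\eta(W''(u)-\beta)$.

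First I would carry out the algebra. Since $c_i=\dot x_i$, the contributions $\pm\dot x_i\,u'(\xi_i)$ coming from $\ep\partial_t$ of the layers and from the $u'$ term inside $\I\psi$ cancel. Fixing $(t,x)$ and letting $x_{i_0}(t)$ be the particle closest to $x$, one writes $\overline{w}_\ep=n'+u(\xi_{i_0})+A$ with $n'\in\Z$ and $A:=\ep^{2s}\overline{\sigma}_\ep+\sum_{i\ne i_0}\widetilde u(\xi_i)-\ep^{2s}\sum_ic_i\psi(\xi_i)$, where $\widetilde u:=u-H$; by the $1$--periodicity of $W'$ one reduces $W'(\overline{w}_\ep)$ to $W'(u(\xi_{i_0})+A)$ and Taylor expands. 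Using $\widetilde u(\xi)=-\frac{1}{2s\beta}\,\xi\,|\xi|^{-1-2s}+O(|\xi|^{-\kappa})$ from \eqref{uinfinity}, the cross--potential term becomes $\ep^{-2s}\big(W''(u(\xi_{i_0}))-\beta\big)\sum_{i\ne i_0}\widetilde u(\xi_i)=-\eta\big(W''(u(\xi_{i_0}))-\beta\big)\big(c_{i_0}+\delta'(t)\big)$ up to small errors, the point being that $\tfrac{\gamma}{2s}\sum_{j\ne i_0}(x_{i_0}-x_j)|x_{i_0}-x_j|^{-1-2s}=c_{i_0}+\delta'(t)$ by \eqref{dynamicalsysNsameorient} together with $\eta\gamma=\beta^{-1}$ (recall \eqref{gamma}--\eqref{eta}). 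Combining this with the term $\eta c_{i_0}(W''(u(\xi_{i_0}))-\beta)$ produced by $\I\psi$ and with $\overline{\sigma}_\ep W''(u(\xi_{i_0}))$ produced by the $\ep^{2s}\overline{\sigma}_\ep$ inside $W'$, the $c_{i_0}$--contributions cancel and one is left, near each $x_{i_0}$, with a residual proportional to $\delta'(t)$ (with a coefficient bounded below by a positive constant, as in the proof of Proposition~5.3 of \cite{pv3}) plus small terms; away from all the $x_{i_0}$'s, i.e.\ where $|x-x_i|\ge\ep^\alpha$ for all $i$ with $\alpha\in(0,\tfrac12)$, the residual is simply $\beta\overline{\sigma}_\ep=\delta'(t)$ plus small terms. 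All the ``small'' contributions — the Taylor remainders, the $\ep^{2s+1}$--order $\overline{\sigma}_\ep'$ and $\dot c_i$ terms, the $\psi$ and $\psi'$ tails of the non--dominant layers, and the errors in the expansion of $\widetilde u$ — carry a strictly positive power $\ep^{\gamma_0}$.

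The second step is the quantitative estimate, where the repulsive structure enters. By Lemma~\ref{Nevenxm+1-xmlemsameor} applied with $N=l$, the system \eqref{dynamicalsysNsameorient} has $T_c=+\infty$ and $x_{i+1}(t)-x_i(t)\ge k(1+t)^{1/(1+2s)}$ for all $t>0$; consequently every interaction term, every $c_i$, every $\dot c_i$, and every non--dominant tail is $O\big((1+t)^{-2s/(1+2s)}\big)$ (or decays faster), using also \eqref{u'infinity} and \eqref{psi'infty}, so that all the small remainders are bounded by $C\ep^{\gamma_0}(1+t)^{-2s/(1+2s)}$. Since $\delta'(t)=(\sigma_\ep+\delta_\ep)(1+t)^{-2s/(1+2s)}$ decays at exactly the same rate, the residual is bounded below by $\big(c_0(\sigma_\ep+\delta_\ep)-C\ep^{\gamma_0}\big)(1+t)^{-2s/(1+2s)}$ for a fixed $c_0>0$; choosing $\delta_\ep:=\ep^{\gamma_0/2}$ makes this nonnegative for all small $\ep$ (and $\delta_\ep=o(1)$, as required), which gives \eqref{supersolwep}. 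Here one uses that $\sigma_\ep\ge0$, so it suffices that $\delta_\ep$ dominate the error power.

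The main obstacle is precisely this residual estimate, and in two respects. First, unlike the finite--horizon Proposition~\ref{thetaepropNeven}, the inequality must hold for all $t>0$; a crude bound would leave $O(\ep^{2s})$ errors that do not decay while the positive gain $\delta'(t)$ tends to zero. The resolution — and the place where one really uses that the particles spread according to Lemma~\ref{Nevenxm+1-xmlemsameor} — is that every error term inherits the decay $(1+t)^{-2s/(1+2s)}$ of the interactions, so that gain and error decay in lockstep and the comparison collapses to the single $\ep$--independent inequality $c_0(\sigma_\ep+\delta_\ep)\ge C\ep^{\gamma_0}$; this is also why the drift in \eqref{dynamicalsysNsameorient} and the prefactor in $\delta(t)$ are tuned to the exponent $\tfrac{1}{1+2s}$. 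Second, one must control the sign of the coefficient of $\delta'(t)$ in the microscopic transition zones $|x-x_{i_0}|\lesssim\ep$, where $u(\xi_{i_0})$ is away from the integers and $W''(u(\xi_{i_0}))-\beta$ is of order one; this is handled exactly as in the proof of Proposition~5.3 of \cite{pv3}, by exploiting the precise choice $\overline{\sigma}_\ep=\delta'/\beta$ together with $\gamma=(\int_\R(u')^2)^{-1}$ and $\eta\gamma=\beta^{-1}$, which make the $c_{i_0}$--dependent terms cancel. A routine but lengthy point is the splitting of $\R$ into the microscopic transition zones, the intermediate annular zones $\ep^\alpha\le|x-x_{i_0}|\lesssim(1+t)^{1/(1+2s)}$, and the far zone, and the check that in each of them the dominant positive contribution is $\delta'(t)$ times a nonnegative coefficient.
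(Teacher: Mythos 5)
Your proposal follows essentially the same route as the paper's own proof: you insert $\overline{w}_\ep$ into the operator, reduce (via periodicity and Taylor expansion around the nearest layer) to the residual expansion already established for Proposition~\ref{thetaepropNeven} (Lemma~8.1 of \cite{pv3}), exploit the cancellation \eqref{parenttermesti3} that ties $\overline{\sigma}_\ep$, $c_i\eta$ and $\eta\gamma=\beta^{-1}$ to the ODE \eqref{dynamicalsysNsameorient}, split into the near-layer zone $|x-x_{i_0}|\le\ep^\alpha$ and the complementary zone, and then — the genuinely new point compared to the finite-horizon case — use Lemma~\ref{Nevenxm+1-xmlemsameor} to show that every interaction, velocity, acceleration, $\widetilde u$ tail and $\psi$ tail decays like $(1+t)^{-2s/(2s+1)}$, precisely the rate of $\delta'(t)$, so the inequality collapses to a time-independent comparison of $\ep$-powers and a choice of $\delta_\ep$ with $\sigma_\ep\ge0$ closes the argument. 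This is the same decomposition, the same case split, and the same use of Lemma~\ref{Nevenxm+1-xmlemsameor}; the only cosmetic difference is that the paper states a list of smallness conditions ($\ep^\alpha,\ep^{2s},\psi(\ep^{-1}),\ep^{\kappa-2s},\ep^{2s(1-\alpha)}=o(\delta_\ep)$) rather than the explicit choice $\delta_\ep=\ep^{\gamma_0/2}$, but the content is identical.
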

Proposition \ref{thetaepropNevenlpart} generalizes Proposition~\ref{thetaepropNeven} in the  case in which the particles $x_i$'s have all the same orientation. 
Indeed, thanks to Lemma~\ref{Nevenxm+1-xmlemsameor},  in the former proposition the error term $\delta'$, appearing in system \eqref{dynamicalsysNsameorient}, is allowed to go to 0 as $t\to+\infty$. 
The proof of Proposition \ref{thetaepropNevenlpart} is a technical
modification of the proof of Proposition~\ref{thetaepropNeven} given in \cite{pv3}. Therefore, we postpone it to the Appendix.

Now, let us choose  $\delta_\ep$ such that \eqref{initialcondwep} and \eqref{supersolwep} hold. Then the comparison principle implies 
\beq\label{wepl<upsiepinfintytime} w_\ep(t,x)\leq \overline{w}_\ep(t,x)\quad\text{for any }(t,x)\in(0,+\infty)\times\R.
\eeq

Let us first consider the case $l=2m$. By Lemma~\ref{Nevenxm+1-xmlemsameor} applied with $\delta$ defined as in \eqref{wbarbarriercond2}, 
we have that 
\beqs\begin{split}x_{m+1}(t)&=\frac{x_{m+1}(t)}{2}+\frac{x_{m+1}(t)}{2}=\frac{x_{m+1}^0+x_m^0}{2}+\delta(0)-\delta(t)+\frac{1}{2}(x_{m+1}(t)-x_{m}(t))\\&
\ge \frac{x_{m+1}^0+x_m^0}{2}+\left(\frac{k}{2}-(1+2s)(\sigma_\ep+\delta_\ep)\right)(1+t)^{\frac{1}{1+2s}}\\&
\ge \frac{x_{m+1}^0+x_m^0}{2}+\frac{k}{4}(1+t)^{\frac{1}{1+2s}},
\end{split}\eeqs
for $\ep$ small enough. Similarly, 
\beqs x_{m}(t)\leq \frac{x_{m+1}^0+x_m^0}{2}-\frac{k}{4}(t+1)^{\frac{1}{1+2s}},\eeqs
for $\ep$ small enough. 
From the previous estimates and \eqref{Nevenm+1-xmlemsameoreq}, we infer that,  for any $R>0$ there exists $t_0>0$ such that if  $|x|\le R$, we have, for any 
$t>t_0$,
\beqs x_m(t)<x<x_{m+1}(t),\quad\text{and }|x-x_i(t)|\ge C (1+t)^\frac{1}{2s+1},\quad\text{for any }i=1,\ldots,l.\eeqs 
Therefore, from \eqref{uinfinity}, we have
\beqs u\left(\displaystyle\frac{x-x_i(t)}{\ep}\right)\leq\begin{cases} 1+C\ep^{2s}(1+t)^{-\frac{2s}{2s+1}}&\text{if }i=1,\ldots,m\\
C\ep^{2s}(1+t)^{-\frac{2s}{2s+1}}&\text{if }i=m+1,\ldots,l.
\end{cases}
\eeqs
Next, from \eqref{dynamicalsysNsameorient} and \eqref{Nevenm+1-xmlemsameoreq}, we see that 
$$|c_i(t)|\leq C(1+t)^{-\frac{2s}{2s+1}}.$$
From the previous estimates, \eqref{vepl<wepinfintytime},  \eqref{overwepsuper} and \eqref{wepl<upsiepinfintytime}, we conclude that, for $t>t_0$,
\beq\label{vepestitinfaboveven}
v_\ep(\mathcal{T}_\ep^{K-l}+t,x)\leq m+C\ep^{2s}(1+t)^{-\frac{2s}{2s+1}}.
\eeq 
Similarly, choosing as lower barrier the function 
$z_\ep(t,x)$ solution of \eqref{vepeq} with $\sigma\equiv 0$ and initial condition 
$$z_\ep(0,x)=\sum_{i=1}^{l}u\left(
\displaystyle\frac{x-\overline{y}_i^\ep}{\ep}\right)-\ep^{2s}\sigma_\ep,$$ where 
$$\overline{y}_1^\ep:=\overline{x}_1^\ep,\quad\quad \overline{y}_{i}^\ep:=\overline{x}_{i}^\ep+\theta_M,\text{ for }i=2,\ldots,l,$$
$$\theta_M:=\max_{i=1,\ldots,l-1}\overline{x}_{i+1}^\ep-\overline{x}_{i}^\ep,$$
and $\overline{x}_1^\ep,\ldots, \overline{x}_l^\ep$ are given by 
Theorem~\ref{NcollisionN=2K+lcor}, we obtain, for $|x|<R$ and $t>t_0$,
\beq\label{vepestitinfbeloweven}
v_\ep(\mathcal{T}_\ep^{K-l}+t,x)\geq m-C\ep^{2s}(1+t)^{-\frac{2s}{2s+1}}.
\eeq

 
 Estimates \eqref{vepestitinfaboveven} and \eqref{vepestitinfbeloweven} give \eqref{vinfinitycorleven}. 
 
 Let us now turn to the case $l=2m+1$. Fix $R>0$ and let $x\in \R$ such that $|x|\leq R$. Then, as before, from \eqref{Nevenm+1-xmlemsameoreq} and \eqref{Nevenm+1-xmlemsameoreqbisodd}, we infer that 
 exist $t_0>0$ and a constant $C>0$ such that for any 
$t>t_0$,
\beqs x_m(t)<x<x_{m+2}(t),\quad|x-x_i(t)|\ge C (1+t)^\frac{1}{2s+1},\,i\neq m+1\eeqs and for any $t>0$
\beqs x_{m+1}(t)=\underline{y}_{m+1}^\ep+\delta(0)-\delta(t)=\underline{y}_{m+1}^\ep-(1+2s)(\sigma_\ep+\delta_\ep)[(1+t)^\frac{1}{1+2s}-1]=\underline{x}^\ep-\alpha_\ep[(1+t)^\frac{1}{1+2s}-1],\eeqs
where 
$$\underline{x}^\ep:=\underline{y}_{m+1}^\ep,$$ and
\beqs
\alpha_\ep:=(1+2s)(\sigma_\ep+\delta_\ep)=o(1)\quad\text{as }\ep\to0.\eeqs
We remark that from Theorem~\ref{NcollisionN=2K+lcor}, $\underline{x}^\ep$ is bounded with respect to $\epsilon$.
Therefore, from \eqref{uinfinity}, we have
\beqs u\left(\displaystyle\frac{x-x_i(t)}{\ep}\right)\leq\begin{cases}1+C\ep^{2s}(1+t)^{-\frac{2s}{2s+1}}&\text{if }i=1,\ldots,m\\
C\ep^{2s}(1+t)^{-\frac{2s}{2s+1}}&\text{if }i=m+2,\ldots,l.
\end{cases}
\eeqs
Moreover 
\beqs u\left(\displaystyle\frac{x-x_{m+1}(t)}{\ep}\right)=u\left(\displaystyle\frac{x-\underline{x}^\ep+\alpha_\ep[(1+t)^\frac{1}{1+2s}-1]}{\ep}\right)\eeqs
 and from \eqref{dynamicalsysNsameorient} and \eqref{Nevenm+1-xmlemsameoreq}, we see that 
$$|c_i(t)|\leq C(1+t)^{-\frac{2s}{2s+1}}.$$
As before, the previous estimates, \eqref{vepl<wepinfintytime} and \eqref{wepl<upsiepinfintytime}, imply  \eqref{vinfinitycorlodd2}.
 Similarly one can prove \eqref{vinfinitycorlodd1}.
This concludes the proof of Theorem~\ref{NcollisionN=2K+lcorinfinity}.  

\section{Proof of Corollary~\ref{nostationarypoints}}\label{FIF}

We argue by contradiction and suppose that there
exists a constant solution
\begin{equation}\label{AJJ:kos}
(x_1(t),\dots,x_N(t))=(x_1^0,\dots,x_N^0)\end{equation}
of~\eqref{dynamicalsysNeven} with~$\sigma\equiv0$
and~$N\ge2$.
Without loss of generality, we suppose that the number of the positive~$
\zeta_i$'s, $K$,  is larger or equal than the number of
the negative ones, $N-K$.

Let $R>0$ be such that $|x_i^0|,\,\,|\underline{x}_\ep|,\,|\overline{x}_\ep|<R$, for any $i=1,\ldots,N$ and $\ep>0$, where $\underline{x}_\ep$ and $\overline{x}_\ep$ are given by
Theorem~\ref{NcollisionN=2K+lcorinfinity}. Pick any point $p<\min\{x_1^0, \underline{x}_\ep\}$ with  $|p|<2R$. Then by \eqref{vinfinitycorleven}, \eqref{vinfinitycorlodd1} and \eqref{vinfinitycorlodd2}, there exists $T_0>0$ such that for any $t>T_0$, we have 
$$\lim_{\ep\to0}v_\ep(t,p)=m.$$ 
On the other hand, since  $p<x_1^0$, by Theorem~1.1 in~\cite{pv2}
and~\eqref{AJJ:kos}, we have that
$$ \lim_{\ep\to0} v_\ep(t,p)
=\sum_{i=1}^N H(\zeta_i(p-x_i^0))-(N-K)=0,$$
where~$H$ is the
Heaviside function. Therefore, we must have $m=0$. 

Next, we fix~$N+1$ points, say~$p_1,\dots,p_{N+1}$, with $|p_i|<2R$ for any $p=1,\ldots,N+1$, such that
\begin{equation}\label{p:ord}
p_1 < x_1^0< p_2 < x_2^0<\dots<x_N^0 
<p_{N+1}\end{equation}
and we denote~$P:=\{p_1,\dots,p_{N+1}\}$.
By Theorem~1.1 in~\cite{pv2}
and~\eqref{AJJ:kos}, we have that, for any $p\in P$, and $t>0$,
\begin{equation}\label{OBa8} \lim_{\ep\to0} v_\ep(t,p)
=\sum_{i=1}^N H(\zeta_i(p-x_i^0))-(N-K).\end{equation}

We remark that the right hand side of~\eqref{OBa8}
is the superposition of~$N$ 
Heaviside functions
(up to a vertical translation).  Accordingly,
the values taken by
the right hand side of~\eqref{OBa8} 
have  $N$ jumps of size~$1$
when~$p\in P$ (recall~\eqref{p:ord}).

On the other hand, when $l=K-(N-K)=0$, by \eqref{vinfinitycorleven}, for any $t>T_0$  and $p\in P$, we have 
$$ \lim_{\ep\to0} v_\ep(t,p)=0$$
which is a contradiction. 

When $l=1$, by \eqref{vinfinitycorlodd1} and \eqref{vinfinitycorlodd2}, we must have that, for any $p\in P$,
$$\sum_{i=1}^N H(\zeta_i(p-x_i^0))-(N-K)\in\{0,1\},$$ which means that the particles $(x_1^0,\dots,x_N^0)$ must have alternate orientation. This is in contradiction with Theorem 1.6 of 
\cite{pv2} which states that in the case of alternate dislocations, when $\sigma\equiv0$, for any initial configuration  there is always a collision in finite time, in particular system 
\eqref{dynamicalsysNeven} does not admit stationary solutions.

Corollary~\ref{nostationarypoints} is then proven.

\section*{Appendix. Proof of Proposition~\ref{thetaepropNevenlpart}}
In order to simplify the notation, 
we set, for $i=1,\ldots,N$
\beq\label{utilde3}\tilde{u}_i(t,x):=u\left(\displaystyle\zeta_i\frac{x-x_i(t)}{\ep}\right)-H\left(\displaystyle\zeta_i\frac{x-x_i(t)}{\ep}\right),\eeq
where $H$ is the Heaviside function
and 
\beqs \psi_i(t,x):=\psi\left(\displaystyle\zeta_i\frac{x-x_i(t)}{\ep}\right).\eeqs

\noindent  Finally, let
\beq\label{vepansbarbis3}I_\ep:=\ep(\overline{w}_\ep)_t+\displaystyle\frac{1}{\ep^{2s}}W'(\overline{w}_\ep)-\I \overline{w}_\ep.\eeq
We want to find $\delta_\ep$ such that $I_\ep\ge 0$. 
To do it,  we need the following result, which is proven in \cite{pv3}. 

\begin{lem}[Lemma 8.1 in \cite{pv3}] For any $(t,x)\in (0,+\infty)\times\R$ we have, for $i=1,\ldots,N$
\beq\label{ieplem3}\begin{split}I_\ep&=O(\tilde{u}_i)(\ep^{-2s}\displaystyle\sum_{j\neq i}\tilde{u}_j+\overline{\sigma}_\ep+c_i\eta)+\frac{\delta'}{\gamma}\\&
+\sum_{j=1}^N\left\{O(\ep^{2s+1}\dot{c}_j)+O(\ep^{2s}c_j^2)\right\}\\&
+\sum_{j\neq i}\left\{O(c_j\psi_j)+O(c_j\tilde{u}_j)+O(\ep^{-2s}\tilde{u}_j^2)\right\}+O(\ep^{2s}),
\end{split} 
\eeq
where $\eta$ and $\gamma$ are given respectively  by \eqref{eta} and \eqref{gamma}.
\end{lem}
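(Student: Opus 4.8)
The plan is to prove \eqref{ieplem3} by a direct computation, expanding the three pieces of $I_\ep$ in \eqref{vepansbarbis3} separately and matching terms; this is Lemma~8.1 of~\cite{pv3}, and since the present $\overline{w}_\ep$ has exactly the same structure as the ansatz used there, the same computation applies verbatim and it suffices to reproduce its skeleton. Fix once and for all an index $i\in\{1,\dots,N\}$. Write $I_\ep=\ep(\overline{w}_\ep)_t+\ep^{-2s}W'(\overline{w}_\ep)-\I\overline{w}_\ep$; I will treat $-\I\overline{w}_\ep$ by linearity and the scaling covariance of $\I$, the nonlinear term $\ep^{-2s}W'(\overline{w}_\ep)$ by the $1$-periodicity of $W'$ together with a second-order Taylor expansion around the dominant layer $u\!\left(\zeta_i\tfrac{x-x_i}{\ep}\right)$, and the time derivative $\ep(\overline{w}_\ep)_t$ by the chain rule together with the identity $c_j=\dot x_j$.

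For the fractional Laplacian, I would use that $\I$ is linear and commutes with the rescaling $x\mapsto\zeta_j(x-x_j)/\ep$ up to the factor $\ep^{-2s}$, so that by \eqref{u} and \eqref{psi} one has $\I\!\left[u\!\left(\zeta_j\tfrac{\cdot-x_j}{\ep}\right)\right]\!(x)=\ep^{-2s}W'\!\left(u\!\left(\zeta_j\tfrac{x-x_j}{\ep}\right)\right)$ and $\I\!\left[\psi\!\left(\zeta_j\tfrac{\cdot-x_j}{\ep}\right)\right]\!(x)=\ep^{-2s}\bigl[W''(u)\psi+u'+\eta(W''(u)-W''(0))\bigr]\!\left(\zeta_j\tfrac{x-x_j}{\ep}\right)$, while $\I[\ep^{2s}\overline{\sigma}_\ep]$ contributes only an $O(\ep^{2s})$ term (it vanishes when $\overline{\sigma}_\ep$ is $x$-independent, and is otherwise controlled by \eqref{sigmaassump}). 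On the other hand, differentiating $\overline{w}_\ep$ in $t$, using $c_j=\dot x_j$ and the boundedness of $\psi$, of $\psi'$ (from \eqref{psi'infty}) and of $(\overline{\sigma}_\ep)_t$, one gets $\ep(\overline{w}_\ep)_t=-\sum_j\zeta_j c_j\,u'\!\left(\zeta_j\tfrac{x-x_j}{\ep}\right)+\sum_j\bigl\{O(\ep^{2s+1}\dot c_j)+O(\ep^{2s}c_j^2)\bigr\}+O(\ep^{2s})$. The crucial observation is that the $-\sum_j\zeta_j c_j u'(\cdots)$ produced here is cancelled exactly by the $+\sum_j\zeta_j c_j u'(\cdots)$ arising from the $u'$ summand in $-\I\overline{w}_\ep$ — this is precisely why $\psi$ solves \eqref{psi} with that right-hand side. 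After this cancellation the surviving part of $\ep(\overline{w}_\ep)_t-\I\overline{w}_\ep$ is $-\ep^{-2s}\sum_j W'\!\left(u\!\left(\zeta_j\tfrac{x-x_j}{\ep}\right)\right)+\sum_j\zeta_j c_j W''(u_j)\psi_j+\eta\sum_j\zeta_j c_j(W''(u_j)-W''(0))$ plus the error buckets already listed.

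It remains to expand $\ep^{-2s}W'(\overline{w}_\ep)$. Writing, for $j\neq i$, $u\!\left(\zeta_j\tfrac{x-x_j}{\ep}\right)=H\!\left(\zeta_j\tfrac{x-x_j}{\ep}\right)+\tilde u_j$ as in \eqref{utilde3}, one has $\overline{w}_\ep=u\!\left(\zeta_i\tfrac{x-x_i}{\ep}\right)+r_i\pmod{\Z}$ with $r_i:=\ep^{2s}\overline{\sigma}_\ep+\sum_{j\neq i}\tilde u_j-\sum_j\zeta_j\ep^{2s}c_j\psi_j$; the $1$-periodicity of $W'$ removes the integer, and since $W\in C^{3,\al}$ a second-order Taylor expansion gives $\ep^{-2s}W'(\overline{w}_\ep)=\ep^{-2s}W'(u_i)+\ep^{-2s}W''(u_i)r_i+\ep^{-2s}O(r_i^2)$. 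The zeroth-order term $\ep^{-2s}W'(u_i)$ cancels the $j=i$ summand of $-\ep^{-2s}\sum_j W'(u_j)$; for $j\neq i$ one expands $W'(u_j)=W''(0)\tilde u_j+O(\tilde u_j^2)$ using $W'=0$ and $W''=W''(0)$ on $\Z$, producing the buckets $-\ep^{-2s}W''(0)\sum_{j\neq i}\tilde u_j$ and $\sum_{j\neq i}O(\ep^{-2s}\tilde u_j^2)$. In the first-order term $\ep^{-2s}W''(u_i)r_i$ one splits $r_i$ into its three pieces: the $\sum_{j\neq i}\tilde u_j$ piece combines with $-\ep^{-2s}W''(0)\sum_{j\neq i}\tilde u_j$ into $\ep^{-2s}(W''(u_i)-W''(0))\sum_{j\neq i}\tilde u_j$, which is $O(\tilde u_i)\,\ep^{-2s}\sum_{j\neq i}\tilde u_j$ by the uniform bound $|W''(u_i)-W''(0)|\le\|W'''\|_\infty|\tilde u_i|$ (using $u\to0,1$ at $\mp\infty$ and periodicity of $W''$); the $\ep^{2s}\overline{\sigma}_\ep$ piece gives $W''(u_i)\overline{\sigma}_\ep=W''(0)\overline{\sigma}_\ep+O(\tilde u_i)\overline{\sigma}_\ep$, whose leading part is the drift $\delta'/\gamma$ of \eqref{ieplem3} (it equals $W''(0)\overline{\sigma}_\ep$ in the normalization of the ansatz, cf.\ \eqref{eta}--\eqref{gamma}); the $-\sum_j\zeta_j\ep^{2s}c_j\psi_j$ piece gives $-\sum_j\zeta_j c_j W''(u_i)\psi_j$, which combined with $+\sum_j\zeta_j c_j W''(u_j)\psi_j$ leaves $\sum_j\zeta_j c_j\bigl(W''(u_j)-W''(u_i)\bigr)\psi_j$: the $j=i$ term vanishes, and for $j\neq i$ it is $O(c_j\tilde u_j)$ (plus an $O(\tilde u_i)$-type remainder), using $|W''(u_j)-W''(u_i)|\le C(|\tilde u_j|+|\tilde u_i|)$ and the boundedness of $\psi_j$. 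Finally the $j=i$ term of $\eta\sum_j\zeta_j c_j(W''(u_j)-W''(0))$ is of the form $O(\tilde u_i)(c_i\eta)$, the $j\neq i$ ones are $O(c_j\tilde u_j)$, and the quadratic remainder $\ep^{-2s}O(r_i^2)$ expands into $\sum_j\bigl\{O(\ep^{-2s}\tilde u_j^2)+O(\ep^{2s}c_j^2)\bigr\}+O(\ep^{2s})$. Collecting all of the above yields exactly \eqref{ieplem3}.

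The main obstacle is essentially bookkeeping: one must route every cross-product into the correct $O(\cdot)$ bucket and, in particular, keep precise track of the two exact cancellations — the $u'$ terms between $\ep(\overline{w}_\ep)_t$ and $-\I\overline{w}_\ep$, and the $j=i$ zeroth-order term $\ep^{-2s}W'(u_i)$ between the Taylor expansion and $-\I\overline{w}_\ep$ — since these are what kill the singular $\ep^{-1}$ and $\ep^{-2s}$ contributions and leave the compact form of \eqref{ieplem3}. The quantitative facts to verify along the way are the two scaling/periodicity identities for $\I$ above and the uniform bounds $|W''(u_i)-W''(0)|\le C|\tilde u_i|$ and $|W'(u_j)-W''(0)\tilde u_j|\le C\tilde u_j^2$, which are what allow the many mixed-layer interactions to be absorbed into the single term $O(\tilde u_i)\bigl(\ep^{-2s}\sum_{j\neq i}\tilde u_j+\overline{\sigma}_\ep+c_i\eta\bigr)$. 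No separation or decay hypotheses on the $x_j$'s are needed for \eqref{ieplem3} itself — those enter only when its right-hand side is subsequently estimated in the proof of Proposition~\ref{thetaepropNevenlpart}.
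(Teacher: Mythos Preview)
The paper does not supply a proof of this lemma: it is quoted verbatim as Lemma~8.1 of~\cite{pv3} and used as a black box in the Appendix. Your outline is a faithful reconstruction of the computation in~\cite{pv3}: compute $-\I\overline{w}_\ep$ layer by layer via the scaling identity and the equations~\eqref{u}--\eqref{psi}, use the $1$-periodicity of $W'$ to reduce $\ep^{-2s}W'(\overline{w}_\ep)$ to a second-order Taylor expansion around $u_i$, and collect the residuals using $|W''(u_k)-W''(0)|\le C|\tilde u_k|$. You correctly identify the two exact cancellations (the $u'$ terms between $\ep(\overline{w}_\ep)_t$ and $-\I\overline{w}_\ep$, and the $j=i$ zeroth-order term $\ep^{-2s}W'(u_i)$), which are indeed the reason no $\ep^{-1}$ or uncontrolled $\ep^{-2s}$ terms survive.

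One small imprecision: you write that the leading constant $W''(0)\overline{\sigma}_\ep$ ``equals $\delta'/\gamma$ in the normalization of the ansatz''. With the definition~\eqref{wbarbarriercond3} one has $W''(0)\overline{\sigma}_\ep=\delta'$, and the appearance of $\delta'/\gamma$ in~\eqref{ieplem3} reflects the convention in~\cite{pv3} where the external-stress perturbation sits \emph{inside} the factor $\gamma$ in the ODE (compare~\eqref{dynamicalsysbarNeven}); in the present setting~\eqref{dynamicalsysNsameorient} is written with $-\delta'$ outside $\gamma$, so the drift constant is really the ``stress'' $\delta'/\gamma$ of the ODE multiplied by $\gamma$. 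This is purely a normalization bookkeeping issue and does not affect your argument: the drift term is the constant $W''(0)\overline{\sigma}_\ep$ produced by the Taylor expansion, and it is this quantity that is used downstream in~\eqref{Iepfinalbefordelta3}.
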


Let us  proceed with the proof of Proposition~\ref{thetaepropNevenlpart}. We consider 
 two cases. 
\bigskip

\noindent\emph{Case 1.}
Suppose that  $x$ is close to $x_i(t)$ more than $\ep^\alpha$, for some $i=1,\ldots,N$:
\beq\label{x-xs2leqeppow3}|x-x_i(t)|\leq \ep^\alpha\quad\text{with }0<\alpha<1.
\eeq
 Then, from \eqref{Nevenm+1-xmlemsameoreq},  for $j\neq i$,
\beq\label{x-xsjcase1promtheepbis}|x-x_j(t)|\geq C(1+t)^\frac{1}{1+2s}.\eeq   
Here and in what follows we denote by $C>0$ several constants independent of $\ep$. 
Hence, from \eqref{uinfinity},   \eqref{utilde3} and \eqref{x-xsjcase1promtheepbis},  we get 
\beqs\begin{split}&\left|\displaystyle\frac{\tilde{u}_j(t,x)}{\ep^{2s}}+\displaystyle\frac{1}{2s W''(0)}\displaystyle\frac{x-x_j(t)}{|x-x_j(t)|^{1+2s}}\right|
\\&= \displaystyle\frac{1}{\ep^{2s}}\left|u\left(\displaystyle\frac{x-x_j(t)}{\ep}\right)-H\left(\displaystyle\frac{x-x_j(t)}{\ep}\right)+
\displaystyle\frac{\ep^{2s}}{2s W''(0)}\displaystyle\frac{x-x_j(t)}{|x-x_j(t)|^{1+2s}}\right|\\&
\leq C\displaystyle\frac{\ep^\kappa}{\ep^{2s}}\displaystyle\frac{1}{|x-x_j(t)|^\kappa}\\&\leq C\ep^{\kappa-2s}(1+t)^{-\frac{k}{1+2s}},\end{split}\eeqs
where $\kappa>2s$  is given in Lemma \ref{uinfinitylem}.
Next, a Taylor expansion of the function $\displaystyle\frac{x-x_j(t)}{|x-x_j(t)|^{1+2s}}$ around $x_i(t)$, gives
\beqs\begin{split}\left|\displaystyle\frac{x-x_j(t)}{|x-x_j(t)|^{1+2s}}-\displaystyle\frac{x_i(t)-x_j(t)}{|x_i(t)-x_j(t)|^{1+2s}}\right|&\leq \displaystyle\frac{2s}{|\xi-x_j(t)|^{1+2s}}|x-x_i(t)|\leq C\ep^  \alpha (1+t)^{-1},\end{split}\eeqs
where $\xi$ is a suitable point lying on the segment joining $x$ to $x_i(t)$. 
The last two inequalities imply for $j\neq i$
\beq\label{u2behavioinfty3}\left|\displaystyle\frac{\tilde{u}_j(t,x)}{\ep^{2s}}+\displaystyle\frac{1}{2s W''(0)}\displaystyle\frac{x_i(t)-x_j(t)}{|x_i(t)-x_j(t)|^{1+2s}}\right|\leq C(\ep^{\kappa-2s} (1+t)^{-\frac{k}{1+2s}}+\ep^{ \alpha}(1+t)^{-1}).\eeq

\noindent Therefore, from  \eqref{ieplem3}, we get that
\beq\label{iepsemifinal3}\begin{split} I_\ep&=O(\tilde{u}_i)\left(\displaystyle\sum_{j\neq i}-\displaystyle\frac{1}{2s W''(0)}\displaystyle\frac{x_i(t)-x_j(t)}{|x_i(t)-x_j(t)|^{1+2s}}+\overline{\sigma}_\ep+c_i\eta\right)+\frac{\delta' }{\gamma}\\&
+C\ep^{\kappa-2s} (1+t)^{-\frac{k}{1+2s}}+C\ep^\alpha(1+t)^{-1}\\&
+\sum_{j=1}^N\left\{O(\ep^{2s+1}\dot{c}_j)+O(\ep^{2s}c_j^2)\right\}\\&
+\sum_{j\neq i}\left\{O(c_j\psi_j)+O(c_j\tilde{u}_j)+O(\ep^{-2s}\tilde{u}_j^2)\right\}.
\end{split}\eeq
Now, from \eqref{wbarbarriercond3}, the definition of $\eta$ given in \eqref{eta} $(\eta=\frac{1}{\gamma W''(0)})$ and \eqref{dynamicalsysNsameorient}, we see that
\beq\label{parenttermesti3}\displaystyle\sum_{j\neq i}-\displaystyle\frac{1}{2s W''(0)}\displaystyle\frac{x_i(t)-x_j(t)}{|x_i(t)-x_j(t)|^{1+2s}}+\overline{\sigma}_\ep+c_i\eta=0.\eeq

\noindent Let us next estimate the remaining  terms in \eqref{iepsemifinal3}. From the definition of $c_i(t)$ given in \eqref{wbarbarriercond3}, system \eqref{dynamicalsysNsameorient} and estimates \eqref{Nevenm+1-xmlemsameoreq},  we have for $j=1,\ldots,N$
\beq\label{c1c2behavior3}|c_j|=O((1+t)^{-\frac{2s}{1+2s}}),\eeq then 
\beq\label{errorestimeateIep13}O(\ep^{2s}c_j^2)=O(\ep^{2s}(1+t)^{-\frac{4s}{1+2s}}).\eeq

\noindent Next, differentiating the equations in \eqref{dynamicalsysNsameorient}  and using \eqref{Nevenm+1-xmlemsameoreq}, we get
\beqs\begin{split}\dot{c}_i&=\gamma\left(-\sum_{j\neq i}\frac{\dot{x}_i-\dot{x}_j}{|x_i-x_j|^{2s+1}}-\delta''(t)\right)\\&
=-\gamma^2\sum_{j\neq i}|x_i-x_j|^{-2s-1}\left(\sum_{k\neq i}\frac{x_i-x_k}{2s|x_i-x_k|^{1+2s}}
-\sum_{l\neq j}\frac{x_j-x_l}{2s|x_j-x_l|^{1+2s}}\right)-\gamma \delta''(t)\\&
=O((1+t)^{-\frac{4s+1}{2s+1}}).
\end{split}\eeqs
Then 
\beq\label{errorestimeateIepcidot3} O(\ep^{2s+1}\dot{c}_j)=O(\ep^{2s+1}(1+t)^{-\frac{4s+1}{2s+1}}).\eeq

\noindent Next, from \eqref{uinfinity} and \eqref{x-xsjcase1promtheepbis}, we have for $j\neq i$
\beq\label{u2behavioinfty23}|\tilde{u}_j|\leq C\ep^{2s}|x-x_j|^{-2s}\leq C\ep^{2s}(1+t)^{-\frac{2s}{2s+1}}
\eeq then using \eqref{c1c2behavior3}, we get for $j\neq i$

\beq\label{errorestimeateIep33}O(c_j\tilde{u}_j)=O(\ep^{2s}(1+t)^{-\frac{4s}{2s+1}})
,\eeq
and
\beq\label{errorestimeateIep43}O(\ep^{-2s}\tilde{u}_j^2)=O(\ep^{2s}(1+t)^{-\frac{4s}{2s+1}})
.\eeq
Next, from \eqref{psi'infty} we know that for $|x|\geq\ep^{-1}C(1+t)^\frac{1}{1+2s}$ 
$$|\psi(x)|\leq  \left|\psi\left(\ep^{-1}C(1+t)^\frac{1}{1+2s}\right)\right|+C\ep^{2s}(1+t)^{-\frac{2s}{1+2s}}.$$
 Therefore, from \eqref{x-xsjcase1promtheepbis} and \eqref{c1c2behavior3} we get 
 \beq\label{errorestimeateIep53}O(c_j\psi_j)=O\left((1+t)^{-\frac{2s}{1+2s}}\psi\left(\ep^{-1}C(1+t)^\frac{1}{1+2s}\right)\right)+O(\ep^{2s}(1+t)^{-\frac{4s}{1+2s}}).
 \eeq
Let us choose $\delta_\ep$ such that 
\beq\label{thetaep3}\ep^\alpha,\, \ep^{2s},\,\psi(\ep^{-1}),\,
\ep^{\kappa-2s}=o(\delta_\ep)\quad\text{as }\ep\to0.\eeq 
 Then, from 
\eqref{iepsemifinal3}, \eqref{parenttermesti3}, \eqref{errorestimeateIep13}, \eqref{errorestimeateIepcidot3},   \eqref{errorestimeateIep33}, \eqref{errorestimeateIep43}, \eqref{errorestimeateIep53},  \eqref{thetaep3} and the definition of $\delta$ given in \eqref{wbarbarriercond2}, we obtain
\beq\label{Iepfinalbefordelta3}
\begin{split}I_\ep&=o(\delta_\ep)(1+t)^{-\frac{2s}{1+2s}}+\frac{1+2s}{\gamma}(\sigma_\ep+\delta_\ep)(1+t)^{-\frac{2s}{1+2s}}\\&
\geq o(\delta_\ep)(1+t)^{-\frac{2s}{1+2s}}+\frac{1+2s}{\gamma}\delta_\ep(1+t)^{-\frac{2s}{1+2s}}.\end{split}\eeq
\bigskip
being $\sigma_\ep\ge 0$.


\noindent\emph{Case 2.} Suppose that for any $i=1,\ldots,N$ we have 
$$|x-x_i(t)|\geq \ep^\alpha.$$ 
If $x_i(t)$ is the closest particle to $x$, then  from \eqref{Nevenm+1-xmlemsameoreq}, for $j\neq i$, we have that
\beqs|x-x_j(t)|\geq C(1+t)^{1+2s}.\eeqs 
Then  estimates  \eqref{c1c2behavior3}, \eqref{errorestimeateIep13},  \eqref{errorestimeateIepcidot3},  \eqref{u2behavioinfty23}, \eqref{errorestimeateIep33}, \eqref{errorestimeateIep43} and \eqref{errorestimeateIep53} hold.
Moreover, using \eqref{uinfinity}, we have 
$$|\tilde{u}_i|\leq C\ep^{2s}|x-x_i|^{-2s}\leq C\ep^{2s(1-\alpha)},$$
and as a consequence, using in addition \eqref{u2behavioinfty23}, for $j\neq i$
$$O(\tilde{u}_i)(\ep^{-2s}\tilde{u}_j)=O(\ep^{2s(1-\alpha)}(1+t)^{-\frac{2s}{1+2s}})
.$$
Finally from   \eqref{c1c2behavior3}, we have 
 $$O(\tilde{u}_i)c_i=O(\ep^{2s(1-\alpha)}(1+t)^{-\frac{2s}{1+2s}})
 .$$
Then, if in addition to  \eqref{thetaep3},  we choose $\delta_\ep$ such that 
$$\ep^{2s(1-\alpha)}=o(\delta_\ep)\quad\text{as }\ep\to0,$$
 from \eqref{ieplem3}, we obtain again \eqref{Iepfinalbefordelta3}.

Now, in both cases, from \eqref{Iepfinalbefordelta3}, for $\ep$ small enough we obtain that 
$$I_\ep\ge 0$$ and the proposition is proven.

\end{document}